\documentclass[11pt]{amsart}
\usepackage{amsmath,amssymb,hyperref,geometry,graphicx,tikz,ytableau,fp,mathdots,enumitem,xcolor,mathtools}
\usepackage{cmbright}

\newcommand{\R}{\mathbb{R}}
\newcommand{\Z}{\mathbb{Z}}

\newcommand{\rk}{\mathrm{rk}}

\renewcommand{\phi}{\varphi}
\newcommand{\PLP}{\mathrm{PLaur}}
\newcommand{\PLM}{\mathrm{PLaur}}
\newcommand{\M}{\mathsf{M}} 
\newcommand\Def[1]{\emph{#1}}

\newcommand{\PL}{\mathrm{PLin}}
\newcommand{\PE}{\mathrm{PExp}}
\newcommand{\uPE}{\underline{\mathrm{PExp}}}
\newcommand{\uPL}{\underline{\mathrm{PLin}}}

\renewcommand{\L}{\mathrm{Lin}}
\renewcommand{\emptyset}{\varnothing}
\newcommand{\Q}{\mathbb{Q}}

\newtheorem{dummy}{}[section]
\newtheorem{theorem}[dummy]{Theorem} 
\newtheorem{corollary}[dummy]{Corollary}
\newtheorem{proposition}[dummy]{Proposition}

\newtheorem{lemma}[dummy]{Lemma}
\theoremstyle{definition}

\newtheorem{remark}[dummy]{Remark}

\begin{document}

\title{A matroidal twist on a formula of Brion}

\author[M.~Beck]{Matthias Beck}
\address{Department of Mathematics, San Francisco State University, United States of America}
\email{mattbeck@sfsu.edu}

\author[C.~Klivans]{Caroline Klivans}
\address{Department of Mathematics, Brown University, United States of America}
\email{caroline\_klivans@brown.edu}

\author[D.~Ross]{Dustin Ross}
\address{Department of Mathematics, San Francisco State University, United States of America}
\email{rossd@sfsu.edu}


\begin{abstract}
Brion's Formula realizes the Laurent polynomial of lattice points in a lattice polytope $P$ as the sum of rational functions associated to the vertices of $P$. In this paper, we consider the special case where $P$ is a generalized permutohedron. We study a modification of the rational functions associated to the vertices of $P$ depending on a given matroid $\M$.  Upon summing these rational functions, we describe how the resulting Laurent polynomial $Q_\M(P)$ behaves in certain ways like the lattice points of $P$, exhibiting natural recursive and reciprocity behaviors. Furthermore, upon evaluating $Q_\M(P)$ at $1$, we recover the matroid Euler characteristics of Larson, Li, Payne, and Proudfoot, so the combinatorial approach in this paper gives new insight into studying these quantities. 
\end{abstract}

\maketitle


\vspace{-18bp}

\section{Introduction}

Given a lattice polytope $P\subset\R^n$, the integer lattice points of $P$ are encoded in the Laurent polynomial
\[
q(P) \, \coloneqq  \sum_{m\in P\cap\Z^n}x^m\in\Z[x_1^{\pm 1},\dots, x_n^{\pm 1}] \, ,
\]
where $x^m\coloneqq x_1^{m_1}\cdots x_n^{m_n}$. Let $V(P)$ denote the vertices of $P$, and for each vertex $v\in V(P)$,
consider the \Def{affine vertex cone} of $P$ at~$v$:
\[
C_{P,v} \, \coloneqq  \, v+\R_{\geq 0}(P-v) \, \subset \, \R^n .
\]
The formal generating series $q(C_{P,v})$ of lattice points in $C_{P,v}$ can be viewed as the expansion of a rational
function $Q(C_{P,v})\in\Z(x_1,\dots,x_n)$, and \emph{Brion's Formula} \cite{brion} says that, when we sum these rational functions over all vertices, the poles cancel, and the sum remarkably simplifies to the Laurent polynomial of lattice points in $P$:
\begin{equation}\label{eq:BrionClassical}
\sum_{v\in V(P)}Q(C_{P,v}) \, = \, q(P) \, .
\end{equation}
Figure~\ref{fig:Brion} depicts an example of Brion's Formula. See~\cite{MR2480796} for expositions of this and adjacent~results.

\begin{figure}[ht]
\begin{center}
\begin{tikzpicture}[baseline={(0,0)},scale=.35]
\fill[green!20, fill opacity=.5] (0,0) -- (4.5,0) -- (4.5,4.5) -- (0,4.5) -- (0,0);
\draw[thick,->] (0,0) -- (5,0);
\draw[thick,->] (0,0) -- (0,5);
\node at (0,0) [draw, shape=circle, fill=black, minimum size=2.5pt, inner sep=0pt] {};
\node at (1,0) [draw, shape=circle, fill=black, minimum size=2.5pt, inner sep=0pt] {};
\node at (2,0) [draw, shape=circle, fill=black, minimum size=2.5pt, inner sep=0pt] {};
\node at (3,0) [draw, shape=circle, fill=black, minimum size=2.5pt, inner sep=0pt] {};
\node at (4,0) [draw, shape=circle, fill=black, minimum size=2.5pt, inner sep=0pt] {};
\node at (0,1) [draw, shape=circle, fill=black, minimum size=2.5pt, inner sep=0pt] {};
\node at (1,1) [draw, shape=circle, fill=black, minimum size=2.5pt, inner sep=0pt] {};
\node at (2,1) [draw, shape=circle, fill=black, minimum size=2.5pt, inner sep=0pt] {};
\node at (3,1) [draw, shape=circle, fill=black, minimum size=2.5pt, inner sep=0pt] {};
\node at (4,1) [draw, shape=circle, fill=black, minimum size=2.5pt, inner sep=0pt] {};
\node at (0,2) [draw, shape=circle, fill=black, minimum size=2.5pt, inner sep=0pt] {};
\node at (1,2) [draw, shape=circle, fill=black, minimum size=2.5pt, inner sep=0pt] {};
\node at (2,2) [draw, shape=circle, fill=black, minimum size=2.5pt, inner sep=0pt] {};
\node at (3,2) [draw, shape=circle, fill=black, minimum size=2.5pt, inner sep=0pt] {};
\node at (4,2) [draw, shape=circle, fill=black, minimum size=2.5pt, inner sep=0pt] {};
\node at (0,3) [draw, shape=circle, fill=black, minimum size=2.5pt, inner sep=0pt] {};
\node at (1,3) [draw, shape=circle, fill=black, minimum size=2.5pt, inner sep=0pt] {};
\node at (2,3) [draw, shape=circle, fill=black, minimum size=2.5pt, inner sep=0pt] {};
\node at (3,3) [draw, shape=circle, fill=black, minimum size=2.5pt, inner sep=0pt] {};
\node at (4,3) [draw, shape=circle, fill=black, minimum size=2.5pt, inner sep=0pt] {};
\node at (0,4) [draw, shape=circle, fill=black, minimum size=2.5pt, inner sep=0pt] {};
\node at (1,4) [draw, shape=circle, fill=black, minimum size=2.5pt, inner sep=0pt] {};
\node at (2,4) [draw, shape=circle, fill=black, minimum size=2.5pt, inner sep=0pt] {};
\node at (3,4) [draw, shape=circle, fill=black, minimum size=2.5pt, inner sep=0pt] {};
\node at (4,4) [draw, shape=circle, fill=black, minimum size=2.5pt, inner sep=0pt] {};
\node [below left]  at (0,0) {\tiny $(0,0)$};
\node[font=\small] at (2.2,-3.5) { $\frac{1}{(1-x)(1-y)}$ };
\end{tikzpicture}
\begin{tikzpicture}[baseline={(0,0)},scale=.35]
\node[font=\small] at (1,-3.5) {$\;\;+\;\;\;$};
\end{tikzpicture}
\begin{tikzpicture}[baseline={(0,0)},scale=.35]
\fill[green!20, fill opacity=.5] (-1.5,0) -- (3,0) -- (-1.5,4.5) -- (-1.5,0);
\draw[thick,->] (3,0) -- (-2,0);
\draw[thick,->] (3,0) -- (-2,5);
\node at (-1,0) [draw, shape=circle, fill=black, minimum size=2.5pt, inner sep=0pt] {};
\node at (0,0) [draw, shape=circle, fill=black, minimum size=2.5pt, inner sep=0pt] {};
\node at (1,0) [draw, shape=circle, fill=black, minimum size=2.5pt, inner sep=0pt] {};
\node at (2,0) [draw, shape=circle, fill=black, minimum size=2.5pt, inner sep=0pt] {};
\node at (3,0) [draw, shape=circle, fill=black, minimum size=2.5pt, inner sep=0pt] {};
\node at (-1,1) [draw, shape=circle, fill=black, minimum size=2.5pt, inner sep=0pt] {};
\node at (0,1) [draw, shape=circle, fill=black, minimum size=2.5pt, inner sep=0pt] {};
\node at (1,1) [draw, shape=circle, fill=black, minimum size=2.5pt, inner sep=0pt] {};
\node at (2,1) [draw, shape=circle, fill=black, minimum size=2.5pt, inner sep=0pt] {};
\node at (-1,2) [draw, shape=circle, fill=black, minimum size=2.5pt, inner sep=0pt] {};
\node at (0,2) [draw, shape=circle, fill=black, minimum size=2.5pt, inner sep=0pt] {};
\node at (1,2) [draw, shape=circle, fill=black, minimum size=2.5pt, inner sep=0pt] {};
\node at (-1,3) [draw, shape=circle, fill=black, minimum size=2.5pt, inner sep=0pt] {};
\node at (0,3) [draw, shape=circle, fill=black, minimum size=2.5pt, inner sep=0pt] {};
\node at (-1,4) [draw, shape=circle, fill=black, minimum size=2.5pt, inner sep=0pt] {};
\node [below right] at (3,0) {\tiny $(3,0)$};
\node[font=\small] at (.8,-3.5) { $\frac{x^3}{(1-x^{-1})(1-x^{-1}y)}$ };
\end{tikzpicture}
\begin{tikzpicture}[baseline={(0,0)},scale=.35]
\node[font=\small] at (1.5,-3.5) {$+\;\;\;\;$};
\end{tikzpicture}
\begin{tikzpicture}[baseline={(0,0)},scale=.35]
\fill[green!20, fill opacity=.5] (0,-1.5) -- (4.5,-1.5) -- (0,3) -- (0,-1.5);
\draw[thick,->] (0,3) -- (0,-2);
\draw[thick,->] (0,3) -- (5,-2);
\node at (0,-1) [draw, shape=circle, fill=black, minimum size=2.5pt, inner sep=0pt] {};
\node at (1,-1) [draw, shape=circle, fill=black, minimum size=2.5pt, inner sep=0pt] {};
\node at (2,-1) [draw, shape=circle, fill=black, minimum size=2.5pt, inner sep=0pt] {};
\node at (3,-1) [draw, shape=circle, fill=black, minimum size=2.5pt, inner sep=0pt] {};
\node at (4,-1) [draw, shape=circle, fill=black, minimum size=2.5pt, inner sep=0pt] {};
\node at (0,0) [draw, shape=circle, fill=black, minimum size=2.5pt, inner sep=0pt] {};
\node at (1,0) [draw, shape=circle, fill=black, minimum size=2.5pt, inner sep=0pt] {};
\node at (2,0) [draw, shape=circle, fill=black, minimum size=2.5pt, inner sep=0pt] {};
\node at (3,0) [draw, shape=circle, fill=black, minimum size=2.5pt, inner sep=0pt] {};
\node at (0,1) [draw, shape=circle, fill=black, minimum size=2.5pt, inner sep=0pt] {};
\node at (1,1) [draw, shape=circle, fill=black, minimum size=2.5pt, inner sep=0pt] {};
\node at (2,1) [draw, shape=circle, fill=black, minimum size=2.5pt, inner sep=0pt] {};
\node at (0,2) [draw, shape=circle, fill=black, minimum size=2.5pt, inner sep=0pt] {};
\node at (1,2) [draw, shape=circle, fill=black, minimum size=2.5pt, inner sep=0pt] {};
\node at (0,3) [draw, shape=circle, fill=black, minimum size=2.5pt, inner sep=0pt] {};
\node [above] at (0,3) {\tiny$(0,3)$};
\node[font=\small] at (2.2,-3.5) { $\frac{y^3}{(1-y^{-1})(1-xy^{-1})}$ };
\end{tikzpicture}
\begin{tikzpicture}[baseline={(0,0)},scale=.35]
\node[font=\small] at (1.5,-3.5) {$\;\;\;\;\;\;=$};
\end{tikzpicture}
\begin{tikzpicture}[baseline={(0,0)},scale=.35]
\draw[thick,fill=green!20, fill opacity=.5] (0,0) -- (3,0) -- (0,3) -- (0,0);
\node at (0,0) [draw, shape=circle, fill=black, minimum size=2.5pt, inner sep=0pt] {};
\node [below left]  at (0,0) {\tiny $(0,0)$};
\node at (1,0) [draw, shape=circle, fill=black, minimum size=2.5pt, inner sep=0pt] {};
\node at (2,0) [draw, shape=circle, fill=black, minimum size=2.5pt, inner sep=0pt] {};
\node at (3,0) [draw, shape=circle, fill=black, minimum size=2.5pt, inner sep=0pt] {};
\node [below right] at (3,0) {\tiny $(3,0)$};
\node at (0,1) [draw, shape=circle, fill=black, minimum size=2.5pt, inner sep=0pt] {};
\node at (1,1) [draw, shape=circle, fill=black, minimum size=2.5pt, inner sep=0pt] {};
\node at (2,1) [draw, shape=circle, fill=black, minimum size=2.5pt, inner sep=0pt] {};
\node at (0,2) [draw, shape=circle, fill=black, minimum size=2.5pt, inner sep=0pt] {};
\node at (1,2) [draw, shape=circle, fill=black, minimum size=2.5pt, inner sep=0pt] {};
\node at (0,3) [draw, shape=circle, fill=black, minimum size=2.5pt, inner sep=0pt] {};
\node [above] at (0,3) {\tiny$(0,3)$};
\node[align=left,font=\tiny] at (2,-3.5) { $\phantom{+}y^3$\\ $+y^2+xy^2$\\  $+y+xy+x^2y$\\  $+1+x+x^2+x^3$ };
\end{tikzpicture}
\end{center}
\caption{Brion's Formula.}\label{fig:Brion}
\end{figure}
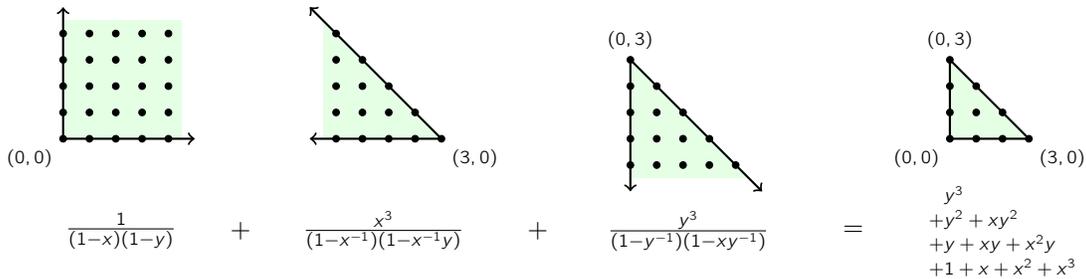

Our goal is to study matroidal generalizations of Brion's Formula in the special setting of generalized
permutohedra. More specifically, for each generalized permutohedron $P\subset\R^n$ and each matroid $\M$ on $[n]
\coloneqq  \{ 1, 2, \dots, n \}$, we study a modification of the left-hand side of \eqref{eq:BrionClassical} that
incorporates the matroid $\M$ in a nontrivial way. The poles still cancel when summed, yielding a
Laurent polynomial $Q_\M(P)$ with interesting properties that naturally generalize fundamental properties of
$q(P)$. The rest of this section aims to briefly explain the combinatorial modification and the claimed implications. 
(See Section~\ref{sec:prerequisites} for more background information on generalized permutohedra and matroids).

A \Def{(lattice) generalized permutohedron} is a lattice polytope $P\subset\R^n$ whose normal fan coarsens the real braid fan $\Sigma_n$.
The prototype of a generalized permutohedron is the \Def{standard permutohedron}, which is the convex hull of
all vectors obtained by permuting the coordinates of $(1,2,\dots,n)\in\R^n$. 
We say that a generalized
permutohedron $P\subset\R^n$ is \Def{nondegenerate} if its normal fan is equal to the braid fan, as is the case for the standard permutohedron. The maximal cones of the braid fan are indexed by permutations in $S_n$, so for any nondegenerate generalized permutohedron $P\subset\R^n$, there is a bijection
$v_P:S_n\rightarrow V(P)$. Moreover, we can write the associated affine vertex cone $C_{P,v_P(\sigma)}$ explicitly in terms of the permutation~$\sigma$:
\[
C_{P,v_P(\sigma)} \, = \, v_P(\sigma)+C_\sigma\;\;\;\text{ where }\;\;\;C_\sigma \, = \, \R_{\geq 0}(e_{\sigma(2)}-e_{\sigma(1)},\dots,e_{\sigma(n)}-e_{\sigma(n-1)}) \, .
\]
It then follows that Brion's Formula for nondegenerate generalized permutohedra can be written explicitly as a sum over permutations:
\begin{equation}\label{eq:Brion}
\sum_{\sigma\in S_n}x^{v_P(\sigma)}Q(C_\sigma) \, = \, q(P)\;\;\;\text{ where }\;\;\;Q(C_\sigma) \, = \,
\frac{1}{\prod_{i=1}^{n-1}\big(1-\frac{x_{\sigma(i+1)}}{x_{\sigma(i)}}\big)} \, .
\end{equation}
More generally, if $P\subset\R^n$ is a (possibly degenerate) generalized permutohedron, there is still a
surjection $v_P:S_n\rightarrow V(P)$, and \eqref{eq:Brion} continues to hold in this more general
setting~\cite{ishida}. Even more generally, if $f=\{f_\sigma\}$ is any piecewise Laurent polynomial on the braid fan, then \eqref{eq:Brion} continues to hold upon replacing $x^{v_P(\sigma)}$ with $f_\sigma$ and by replacing $P$ with the virtual polytope associated to $f$ \cite{VP1,VP2}.

A \Def{matroid} $\M$ on the ground set $[n]$ is a nonempty collection of subsets of $[n]$, called \emph{bases}, that satisfy the basis-exchange property. The \Def{base polytope} $P_\M\subset\R^n$ of a matroid $\M$ is the convex hull of the vectors $e_B\coloneqq \sum_{i\in B} e_i$ where
$B\in \M$ is a basis. The basis-exchange property implies that $P_\M$ is a generalized permutohedron with
vertices in bijection with bases of $\M$; in particular, there is a surjection $B_\M:S_n\rightarrow\M$; the basis $B_\M(\sigma)$ is often called the \emph{greedy basis} of $\M$ with respect to $\sigma$.

Now let $f\in \PLP(\Sigma_n)$ be a piecewise Laurent polynomial on the braid fan. In this paper, we study the following generalizations of the rational function appearing in the left-hand side of \eqref{eq:Brion}:
\[
Q_\M(f) \, \coloneqq  \, \sum_{\sigma\in S_n}f_\sigma Q(C_\sigma)\prod_{i\notin B_\M(\sigma)}(1-x_i) \, \in \,
\Z(x_1,\dots,x_n) \, .
\]
In the special case where $f=\{x^{v_P(\sigma)}\}$ is the collection of vertex monomials of a generalized permutohedron
$P\subset\R^n$, we use the notation $Q_\M(P)\coloneqq Q_\M(f)$. Our primary motivation for studying $Q_\M(f)$ is that
it naturally arises when one computes matroid Euler characteristics, as defined by Larson, Li, Payne, and Proudfoot
\cite{larsonetal}, via localization techniques on the permutohedral variety, following ideas developed by Berget, Eur, Spink, and Tseng \cite{BEST}.
While many of the results in this paper can be proved directly using results from \cite{BEST} and \cite{larsonetal}, we approach the study of $Q_\M(f)$ from the purely combinatorial perspective. In particular, our aim is to start with the expression for $Q_M(f)$ given above and to give elementary justifications of its most fundamental properties. This first-principles approach to studying $Q_M(f)$ serves to reveal novel combinatorial aspects of equivariant Euler characteristics of matroids, while also providing a more concrete entryway into their study.

While $Q_\M(f)$ is defined as a sum of rational functions, the most fundamental property of $Q_\M(f)$ is that the poles appearing in the summands cancel in the sum, yielding a Laurent polynomial.

\begin{theorem}\label{thm:Polynomial}
For any piecewise Laurent polynomial $f\in \PLP(\Sigma_n)$ and any matroid $\M$ on $[n]$, 
\[
Q_\M(f) \, \in \, \Z[x_1^{\pm 1},\dots,x_n^{\pm 1}] \, .
\]
\end{theorem}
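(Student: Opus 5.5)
The plan is to show that $Q_\M(f)$ has no poles by a residue-cancellation argument along each hyperplane $x_a=x_b$. Clearing the monomial denominators, each summand has the form $(\text{Laurent polynomial})/\prod_{i=1}^{n-1}(x_{\sigma(i)}-x_{\sigma(i+1)})$. So $Q_\M(f)\cdot\prod_{a<b}(x_a-x_b)$ is a Laurent polynomial, and every factor $x_a-x_b$ appears at most once in any single summand's denominator. Since $\Z[x_1^{\pm1},\dots,x_n^{\pm1}]$ is a UFD and the $x_a-x_b$ are pairwise non-associate primes, it suffices to fix a pair $a\neq b$ and show that $Q_\M(f)$ has no pole along $x_a=x_b$. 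Equivalently, after multiplying by $(1-x_b/x_a)$, the sum vanishes on the hyperplane $x_a=x_b$.

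Fix $a\neq b$. A summand indexed by $\sigma$ has a factor $x_a-x_b$ in its denominator only if $a$ and $b$ are adjacent in $\sigma$, i.e.\ $\{\sigma(i),\sigma(i+1)\}=\{a,b\}$ for some $i$. The other summands are regular along $x_a=x_b$. Pair each such $\sigma$ with $\sigma'=\sigma\circ(i\ i{+}1)$, and compare the two summands after restricting to $x_a=x_b$ the factors other than the offending one.

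\emph{The outer denominator factors.} Say $\sigma=(\dots,c,a,b,d,\dots)$ and $\sigma'=(\dots,c,b,a,d,\dots)$. The neighbouring factors $(1-x_a/x_c)(1-x_d/x_b)$ and $(1-x_b/x_c)(1-x_d/x_a)$ agree on $x_a=x_b$, and all other factors are identical.

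\emph{The function values.} Since $C_\sigma$ and $C_{\sigma'}$ share the wall lying in $\{x_a=x_b\}$, the compatibility condition in the definition of $\PLP(\Sigma_n)$ says $f_\sigma-f_{\sigma'}$ is divisible by $1-x_a/x_b$. Hence $f_\sigma=f_{\sigma'}$ on the hyperplane.

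\emph{The matroid factor.} This needs a small lemma about greedy bases under an adjacent transposition: $B_\M(\sigma')$ is either $B_\M(\sigma)$ or $B_\M(\sigma)\triangle\{a,b\}$, where the latter case swaps $a$ for $b$. To prove it, recall that the greedy basis is determined by the ranks of the initial segments $\{\sigma(1),\dots,\sigma(k)\}$, with $\sigma(k)$ in the basis iff the rank jumps at step $k$. These segments coincide for $\sigma,\sigma'$ except at $k=i$. So membership agrees outside $\{a,b\}$, and the number of elements of $\{a,b\}$ in the basis is $\rk(S\cup\{a,b\})-\rk(S)$ in both cases, where $S$ is the segment before position $i$. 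Consequently $\prod_{j\notin B}(1-x_j)$ for $\sigma$ and $\sigma'$ either agree or differ by interchanging $(1-x_a)$ and $(1-x_b)$, and so they agree on $x_a=x_b$.

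Combining these, with $t=x_b/x_a$ the two paired terms restrict on the hyperplane to a common regular factor $R$ times the singular parts $1/(1-t)$ and $1/(1-t^{-1})$. Their sum is $R\cdot\bigl(\tfrac{1}{1-t}+\tfrac{1}{1-t^{-1}}\bigr)=R$, which is regular. More precisely, $(1-t)$ times the pair's sum vanishes at $t=1$, so the pair has no pole along $x_a=x_b$. Summing over all pairs and all $a,b$ gives the theorem.

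I expect the main obstacle to be the bookkeeping rather than anything deep. First, one has to make the "no pole along each hyperplane implies Laurent polynomial" step precise, working with $D\cdot Q_\M(f)$ for $D=\prod_{a<b}(x_a-x_b)$ and showing each $x_a-x_b$ divides it. Second, one has to verify the greedy-basis lemma in whichever convention the paper uses for $B_\M(\sigma)$. If the conventions make the lemma awkward, a fallback is to compare the vertices $e_{B_\M(\sigma)}$ and $e_{B_\M(\sigma')}$ of $P_\M$: adjacent chambers map to vertices that coincide or are joined by an edge parallel to $e_a-e_b$.
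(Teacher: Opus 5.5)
Your argument is correct, and it follows a genuinely different route from the paper's.

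The paper never works wall by wall. It proves Theorem~\ref{thm:Polynomial} by induction on $n$ via Proposition~\ref{prop:indstep}:
\begin{enumerate}
\item It groups the permutations according to the $\mu\in S_{n-1}$ obtained by deleting $n$.
\item It uses the partial-sum identities of Lemma~\ref{lem:cones} and the description of greedy bases along the chain $\mu_1,\dots,\mu_n$ (Lemma~\ref{lem:bases}) to collapse each fiber into a single term $f_\mu Q(C_\mu)\prod_{i\notin B_{\M\setminus\{n\}}(\mu)}(1-x_i)$, with an explicit Laurent polynomial $f_\mu$.
\item Most of the work (Lemmas~\ref{lem:divides} and~\ref{lemma:thresholds}, plus the case analysis on $k_\M(\mu)$) then goes into checking that $\{f_\mu\}$ is again piecewise polynomial on $\Sigma_{n-1}$ with coefficients in $\mathbb{Z}[x_n^{\pm1}]$, so the induction can continue.
\end{enumerate}

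Your proof instead splits into two independent facts. First, your greedy-basis lemma says exactly that $\sigma\mapsto f_\sigma\prod_{j\notin B_\M(\sigma)}(1-x_j)$ is itself an element of $\PLP(\Sigma_n)$. Second, your pairing across each wall $x_a=x_b$ shows that $\sum_\sigma g_\sigma Q(C_\sigma)$ is a Laurent polynomial for \emph{every} $g\in\PLP(\Sigma_n)$. In effect this is a self-contained proof of the route through Brion's formula for virtual polytopes, which the paper mentions right after the statement but does not follow.

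What your version buys: it is shorter, it avoids all the threshold bookkeeping, and it uses nothing about the matroid beyond how greedy bases change under an adjacent transposition.

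What it does not give is the identity of the resulting polynomial. The paper's recursion produces $Q_\M(f)$ explicitly. That is what yields its new proof of Brion's formula, identifying the sum with $q(P)$ slice by slice. It also yields Corollary~\ref{corollary:basecase}, $Q_\M(1)=\prod_{i\text{ a loop}}(1-x_i)$, which is needed later for the normalization $\widetilde\chi^*_\M(0)=1$ in Theorem~\ref{thm:euler}. With your approach, these would have to be computed separately.
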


Upon observing that the products in each summand of $Q_\M(f)$ form a piecewise Laurent polynomial on the braid fan (this follows from the basis-exchange axiom), we note that Theorem~\ref{thm:Polynomial} follows from the generalization of Brion's Formula to virtual polytopes \cite[Proposition~4.2]{VP1} (or equivalently, from the $K$-theoretic localization formula on the pemutohedral variety \cite[Theorem~10.2(a)]{BEST}). We give an alternative proof of Theorem~\ref{thm:Polynomial} in Section~\ref{sec:polynomiality}. Our first-principles argument has several notable features: (i)~it gives an explicit recursive construction of the Laurent polynomial $Q_\M(f)$ in terms of $f$ and $\M$, (ii)~it leads to a novel proof of the formulation in \eqref{eq:Brion} of Brion's Formula for generalized permutohedra, and (iii) it gives a simple formula for $Q_\M(1)$.

Equipped with Theorem~\ref{thm:Polynomial}, we now ask the question: For a general
matroid $\M$, in what ways does $Q_\M(P)$ behave like the lattice points in $P$? We present
two answers to this question. 

\subsection{Recursion}

If $P\subset\R^n$ is a nondegenerate generalized permutohedron, then the facets of $P$ correspond to nonempty proper subsets $\emptyset\subset T\subset [n]$. Upon sliding such a facet $F_T$ inward by one lattice step along its normal direction, we obtain a new generalized permutohedron $P_T$, and there is a natural relation between the lattice points of $P$, the lattice points of $P_T$, and the lattice points of $F_T$:
\[
q(P)=q(P_T)+q(F_T)
\] 
This relation is depicted in Figure~\ref{fig:slidingfacet} where $T=\{2,3\}\subset[3]$.

\begin{figure}[ht]
\begin{tikzpicture}[scale=1.2]
\draw[thick,fill=green!20, fill opacity=.5] (1,0) -- (1,1) -- (1-1.732/2,3/2) -- (1-1.732,1) -- (1-1.732,0) -- (1-1.732/2,-1/2) -- (1,0);

\node at (1,0) [draw, shape=circle, fill=black, minimum size=3pt, inner sep=0pt] {};
\node [below right]  at (1,0) {\tiny $(4,2,6)$};
\node at (1,1/2) [draw, shape=circle, fill=black, minimum size=3pt, inner sep=0pt] {};
\node at (1,1) [draw, shape=circle, fill=black, minimum size=3pt, inner sep=0pt] {};
\node [above right] at (1,1) {\tiny $(2,4,6)$};
\node at (1-1.732/4,5/4) [draw, shape=circle, fill=black, minimum size=3pt, inner sep=0pt] {};
\node at (1-1.732/2,3/2) [draw, shape=circle, fill=black, minimum size=3pt, inner sep=0pt] {};
\node [above] at (1-1.732/2,3/2) {\tiny $(2,6,4)$};
\node at (1-3*1.732/4,5/4) [draw, shape=circle, fill=black, minimum size=3pt, inner sep=0pt] {};
\node at (1-1.732,1) [draw, shape=circle, fill=black, minimum size=3pt, inner sep=0pt] {};
\node [above left] at (1-1.732,1) {\tiny $(4,6,2)$};
\node at (1-1.732,1/2) [draw, shape=circle, fill=black, minimum size=3pt, inner sep=0pt] {};
\node at (1-1.732,0) [draw, shape=circle, fill=black, minimum size=3pt, inner sep=0pt] {};
\node [below left] at (1-1.732,0) {\tiny$(6,4,2)$};
\node at (1-3*1.732/4,-1/4) [draw, shape=circle, fill=black, minimum size=3pt, inner sep=0pt] {};
\node at (1-1.732/2,-1/2) [draw, shape=circle, fill=black, minimum size=3pt, inner sep=0pt] {};
\node [below] at (1-1.732/2,-1/2) {\tiny $(6,2,4)$};
\node at (1-1.732/4,-1/4) [draw, shape=circle, fill=black, minimum size=3pt, inner sep=0pt] {};

\node at (1-1.732/2,1/2) [draw, shape=circle, fill=black, minimum size=3pt, inner sep=0pt] {};

\node at (1-1.732/4,1/4) [draw, shape=circle, fill=black, minimum size=3pt, inner sep=0pt] {};
\node at (1-1.732/4,3/4) [draw, shape=circle, fill=black, minimum size=3pt, inner sep=0pt] {};
\node at (1-1.732/2,1) [draw, shape=circle, fill=black, minimum size=3pt, inner sep=0pt] {};
\node at (1-3*1.732/4,3/4) [draw, shape=circle, fill=black, minimum size=3pt, inner sep=0pt] {};
\node at (1-3*1.732/4,1/4) [draw, shape=circle, fill=black, minimum size=3pt, inner sep=0pt] {};
\node at (1-1.732/2,0) [draw, shape=circle, fill=black, minimum size=3pt, inner sep=0pt] {};
\end{tikzpicture}
\hspace{20bp}
\raisebox{46bp}{$=$}
\hspace{20bp}
\begin{tikzpicture}[scale=1.2]
\draw[thick,fill=green!20, fill opacity=.5] (1,0) -- (1,1/2) -- (1-3*1.732/4,5/4) -- (1-1.732,1) -- (1-1.732,0) -- (1-1.732/2,-1/2) -- (1,0);

\node at (1,0) [draw, shape=circle, fill=black, minimum size=3pt, inner sep=0pt] {};
\node [below right]  at (1,0) {\tiny $(4,2,6)$};
\node at (1,1/2) [draw, shape=circle, fill=black, minimum size=3pt, inner sep=0pt] {};
\node [above right] at (1,1/2) {\tiny $(3,3,6)$};
\node at (1-3*1.732/4,5/4) [draw, shape=circle, fill=black, minimum size=3pt, inner sep=0pt] {};
\node [above] at (1-3*1.732/4,5/4) {\tiny $(3,6,3)$};
\node at (1-1.732,1) [draw, shape=circle, fill=black, minimum size=3pt, inner sep=0pt] {};
\node [above left] at (1-1.732,1) {\tiny $(4,6,2)$};
\node at (1-1.732,1/2) [draw, shape=circle, fill=black, minimum size=3pt, inner sep=0pt] {};
\node at (1-1.732,0) [draw, shape=circle, fill=black, minimum size=3pt, inner sep=0pt] {};
\node [below left] at (1-1.732,0) {\tiny$(6,4,2)$};
\node at (1-3*1.732/4,-1/4) [draw, shape=circle, fill=black, minimum size=3pt, inner sep=0pt] {};
\node at (1-1.732/2,-1/2) [draw, shape=circle, fill=black, minimum size=3pt, inner sep=0pt] {};
\node [below] at (1-1.732/2,-1/2) {\tiny $(6,2,4)$};
\node at (1-1.732/4,-1/4) [draw, shape=circle, fill=black, minimum size=3pt, inner sep=0pt] {};

\node at (1-1.732/2,1/2) [draw, shape=circle, fill=black, minimum size=3pt, inner sep=0pt] {};

\node at (1-1.732/4,1/4) [draw, shape=circle, fill=black, minimum size=3pt, inner sep=0pt] {};
\node at (1-1.732/4,3/4) [draw, shape=circle, fill=black, minimum size=3pt, inner sep=0pt] {};
\node at (1-1.732/2,1) [draw, shape=circle, fill=black, minimum size=3pt, inner sep=0pt] {};
\node at (1-3*1.732/4,3/4) [draw, shape=circle, fill=black, minimum size=3pt, inner sep=0pt] {};
\node at (1-3*1.732/4,1/4) [draw, shape=circle, fill=black, minimum size=3pt, inner sep=0pt] {};
\node at (1-1.732/2,0) [draw, shape=circle, fill=black, minimum size=3pt, inner sep=0pt] {};
\end{tikzpicture}
\hspace{20bp}
\raisebox{46bp}{$+$}
\hspace{20bp}
\begin{tikzpicture}[scale=1]
\draw[thick,fill=green!20, fill opacity=.5] (1,1) -- (1-1.732/2,3/2);

\node at (1,1) [draw, shape=circle, fill=black, minimum size=3pt, inner sep=0pt] {};
\node at (1-1.732/4,5/4) [draw, shape=circle, fill=black, minimum size=3pt, inner sep=0pt] {};
\node at (1-1.732/2,3/2) [draw, shape=circle, fill=black, minimum size=3pt, inner sep=0pt] {};
\node [above right] at (1,1) {\tiny $(2,4,6)$};
\node [above] at (1-1.732/2,3/2) {\tiny $(2,6,4)$};
\node [below] at (1-1.732/2,-1/2) {\phantom{5}};
\end{tikzpicture}
\caption{Sliding a facet.}\label{fig:slidingfacet}
\end{figure}
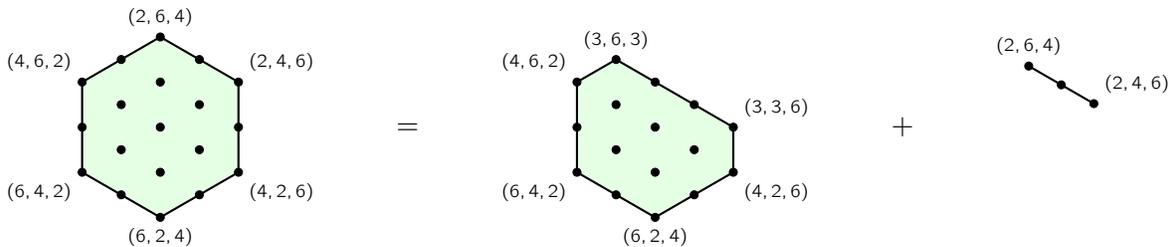

Moreover, the facet $F_T$ can naturally be viewed as the product of two generalized permutohedra, which we denote $P|T\subset \R^T$ and $P/T\subset\R^{T^c}$ \cite{postnikovreinerwilliams}. Thus, we obtain a recursion purely in the context of generalized permutohedra:
\begin{equation}\label{eq:recursionclassical}
q(P)=q(P_T)+q(P|T)q(P/T).
\end{equation}
We refer to \eqref{eq:recursionclassical} as a ``recursion'' because it computes the lattice points of the generalized permutohedron $P$ in terms of the smaller generalized permutohedra $P_T$, $P|T$, and $P/T$.

Abstracting this recursion to any matroid and any piecewise Laurent monomial, we obtain the following generalization of \eqref{eq:recursionclassical} (see Sections~\ref{sec:prerequisites} and \ref{sec:recursion} for precise definitions).

\begin{theorem}\label{thm:recursion}
For any piecewise Laurent monomial $f\in\PLM(\Sigma_n)^*$, for any matroid $\M$ on $[n]$, and for any nonempty proper subset $\emptyset\subset T\subset [n]$, 
\[
Q_\M(f) \, = \, Q_{\M}(f_T)+Q_{\M|T}(f|T) \, Q_{\M/T}(f/T) \, ,
\]
where $\M|T$ is the restriction of $\M$ to $T$ and $\M/T$ is the contraction of $\M$ by $T$.
\end{theorem}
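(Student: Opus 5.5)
The plan is to compare the two sides summand by summand over $S_n$. Write $k=|T|$. I expect (and will check against the definitions in Section~\ref{sec:recursion}) that the slid function $f_T$ agrees with $f$ on every cone $C_\sigma$ except those adjacent to the facet $F_T$. These should be the permutations with $\{\sigma(1),\dots,\sigma(k)\}=T$, or the reverse initial segment, depending on the sign convention for $C_\sigma$.

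On such a cone, sliding the facet inward by one lattice step moves the vertex one step along the unique edge of $C_\sigma$ leaving $F_T$. That edge is the ray $e_{\sigma(k+1)}-e_{\sigma(k)}$. So I expect
\[
(f_T)_\sigma \, = \, f_\sigma\cdot\frac{x_{\sigma(k+1)}}{x_{\sigma(k)}}
\qquad\text{whenever } \sigma(\{1,\dots,k\})=T .
\]
The matroid factor $\prod_{i\notin B_\M(\sigma)}(1-x_i)$ is the same in $Q_\M(f)$ and $Q_\M(f_T)$. Hence
\[
Q_\M(f)-Q_\M(f_T) \, = \sum_{\sigma:\,\sigma([k])=T} f_\sigma\Big(1-\frac{x_{\sigma(k+1)}}{x_{\sigma(k)}}\Big)Q(C_\sigma)\prod_{i\notin B_\M(\sigma)}(1-x_i) .
\]
The factor $1-x_{\sigma(k+1)}/x_{\sigma(k)}$ cancels exactly the $i=k$ factor in the denominator of $Q(C_\sigma)$. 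What remains is $Q(C_{\sigma'})\,Q(C_{\sigma''})$, where $\sigma'=(\sigma(1),\dots,\sigma(k))$ is a linear order on $T$ and $\sigma''=(\sigma(k+1),\dots,\sigma(n))$ is a linear order on $T^c$. These are the braid-fan cones in $\R^T$ and $\R^{T^c}$.

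The map $\sigma\mapsto(\sigma',\sigma'')$ is a bijection from $\{\sigma\in S_n:\sigma([k])=T\}$ to $S_T\times S_{T^c}$. So it remains to factor the other two ingredients.

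First, the matroid factor. The greedy algorithm along $\sigma$ first processes the elements of $T$ in the order $\sigma'$, producing the greedy basis $B_{\M|T}(\sigma')$ of $\M|T$. It then continues on $T^c$ in the order $\sigma''$. An element is added at this stage exactly when it is independent over the span of $T$, i.e.\ when greedy in $\M/T$ adds it. Therefore
\[
B_\M(\sigma) \, = \, B_{\M|T}(\sigma')\sqcup B_{\M/T}(\sigma''),
\]
and the product over $i\notin B_\M(\sigma)$ splits as the product of the corresponding factors for $\M|T$ on $T$ and for $\M/T$ on $T^c$.

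Second, the piecewise monomial. I expect the definitions of $f|T$ and $f/T$ (modeled on $F_T\cong P|T\times P/T$) to give exactly $f_\sigma=(f|T)_{\sigma'}\,(f/T)_{\sigma''}$. Summing over $S_T\times S_{T^c}$ then factors the difference as $Q_{\M|T}(f|T)\,Q_{\M/T}(f/T)$, which is the claim.

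The main obstacle is bookkeeping of conventions rather than anything conceptual. Three things must be consistent:
\begin{enumerate}[label=(\roman*)]
\item whether the facet $T$ corresponds to initial or final segments of $\sigma$;
\item the direction in which greedy bases are built;
\item the precise definitions of $f_T$, $f|T$ and $f/T$.
\end{enumerate}
These must line up so that the facet-adjacent cones are the ones where $T$ is processed first by the greedy algorithm, which is what makes restriction and contraction appear in the right places. If the conventions are reversed, I would first try rewriting $\sigma$ in reverse order and applying the same argument with the roles of $T$ and $T^c$ swapped. I would also verify directly from the definition of $\PLM(\Sigma_n)^*$ that $f_T$ is again a piecewise Laurent monomial, i.e.\ that it satisfies the compatibility conditions across walls.
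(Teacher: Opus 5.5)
Your proposal is correct and follows the paper's proof essentially step for step. Your guessed definition of $f_T$ is the paper's: multiply by $x_{\sigma(k+1)}/x_{\sigma(k)}$ exactly when $\sigma$ has initial segment $T$. The paper likewise cancels the $i=k$ denominator factor to obtain $Q(C_{\sigma|T})\,Q(C_{\sigma/T})$, splits $B_\M(\sigma)=B_{\M|T}(\sigma|T)\cup B_{\M/T}(\sigma/T)$, and uses $f_\sigma=(f|T)_{\sigma|T}(f/T)_{\sigma/T}$ to factor the sum over $S_T\times S_{T^c}$; your rank-increment justification of the greedy-basis splitting is a bit more explicit than the paper's appeal to the definitions of restriction and contraction.
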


Recent work of Larson, Li, Payne, and Proudfoot \cite{larsonetal} develops the $K$-ring $K(\M)$ of a loopless matroid $\M$ and a matroid Euler characteristic
\[
\chi_\M:K(\M)\rightarrow\Z \, .
\]
For any loopless matroid $\M$, there is a natural surjection $\phi_\M:\PLP(\Sigma_n)\rightarrow K(\M)$, and it follows from the localization perspective developed in \cite{BEST} that $\chi_\M(\phi_\M(f))$ agrees with the evaluation of $Q_\M(f)$ at the value $x_1=\dots=x_n=1$. In other words, the Laurent polynomials $Q_\M(f)$ provide a refined approach to studying Euler characteristics of matroids. In Section~\ref{sec:recursion}, we describe how Theorem~\ref{thm:recursion} and our constructive proof of Theorem~\ref{thm:Polynomial} give an alternative justification of the fact that $\chi_\M(\phi_\M(f))$ agrees with the evaluation of $Q_\M(f)$ at the value $x_1=\dots=x_n=1$, yielding a first-principles justification of the fact that the Bergman fan of a loopless matroid is an \emph{Ehrhart} fan, in the sense of \cite{ehrhartfans}.

\subsection{Reciprocity}

Ehrhart theory studies growth polynomials of lattice points in dilates of a polytope.  A cornerstone of the theory is Ehrhart reciprocity, which asserts that the value of the Ehrhart polynomial of a polytope at $-1$ is, up to a sign, the number of lattice points in the relative interior of the polytope. In the setting of piecewise Laurent polynomials, dilating generalized permutohedra translates to taking powers of the corresponding vertex monomials, so Ehrhart reciprocity concerns the natural map $(-)^\vee:\PLP(\Sigma_n)\rightarrow \PLP(\Sigma_n)$ that sends every monomial to its multiplicative inverse. More generally, given any function on $\PLP(\Sigma_n)$, a \emph{combinatorial reciprocity theorem} for generalized permutohedra aims to compare the values of the function on $f$ and $f^\vee$. Given a matroid $\M$ on $[n]$, we are particularly interested in studying reciprocity for the function $Q_\M:\PLP(\Sigma_n)\rightarrow \Z[x_1^{\pm 1},\dots,x_n^{\pm 1}]$. Define $\omega_\M\in\PLP(\Sigma_n)$ by
\[
(\omega_\M)_\sigma \, \coloneqq  \, \frac{x_{\sigma(n)}}{x_{\sigma(1)}}\prod_{i\notin B_\M(\sigma)}x_i^{-1} \, .
\]
In Section~\ref{sec:reciprocity}, we obtain the following reciprocity theorem.

\begin{theorem}\label{thm:reciprocity}
For any piecewise Laurent polynomial $f\in\PLP(\Sigma_n)$ and any matroid $\M$ on $[n]$, 
\[
Q_\M(f^\vee) \, = \, (-1)^{\rk(\M)-1} \, Q_\M(f\cdot\omega_\M)^\vee.
\]
\end{theorem}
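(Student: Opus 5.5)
The plan is to prove the identity summand-by-summand at the level of rational functions, using the fact that $(-)^\vee$, i.e.\ the substitution $x_i\mapsto x_i^{-1}$, is a ring automorphism of $\Z(x_1,\dots,x_n)$. By Theorem~\ref{thm:Polynomial} both sides are Laurent polynomials. The map $(-)^\vee$ applied to a Laurent polynomial agrees with the same substitution applied to a rational function. So it suffices to show, for each $\sigma\in S_n$, that
\[
f_\sigma^{-1}\,Q(C_\sigma)^\vee\prod_{i\notin B_\M(\sigma)}(1-x_i^{-1})
\;=\;(-1)^{\rk(\M)-1}\Big(f_\sigma(\omega_\M)_\sigma\,Q(C_\sigma)\prod_{i\notin B_\M(\sigma)}(1-x_i)\Big)^\vee .
\]
Here the left side is the $\sigma$-summand of $Q_\M(f^\vee)$, with $(f^\vee)_\sigma=f_\sigma^\vee$. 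If $f_\sigma$ is a general Laurent polynomial rather than a monomial, the factor $f_\sigma^\vee$ appears identically on both sides, so linearity is not even needed.

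Cancelling $f_\sigma^\vee$ and applying $(-)^\vee$ to both sides, the claim reduces to the identity
\[
Q(C_\sigma)\prod_{i\notin B}(1-x_i)\;=\;(-1)^{\rk(\M)-1}\,(\omega_\M)_\sigma\,Q(C_\sigma)^\vee\prod_{i\notin B}(1-x_i^{-1}),
\qquad B=B_\M(\sigma).
\]
I verify this with two elementary rational-function identities.

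\begin{enumerate}
\item Each factor of $Q(C_\sigma)$ satisfies
\[
\frac{1}{1-x_{\sigma(i)}/x_{\sigma(i+1)}}\;=\;-\frac{x_{\sigma(i+1)}}{x_{\sigma(i)}}\cdot\frac{1}{1-x_{\sigma(i+1)}/x_{\sigma(i)}} .
\]
Taking the product over $i=1,\dots,n-1$ telescopes the monomials, giving
\[
Q(C_\sigma)^\vee=(-1)^{n-1}\frac{x_{\sigma(n)}}{x_{\sigma(1)}}\,Q(C_\sigma).
\]
This is the standard reciprocity for a unimodular simplicial cone.

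\item For each $i\notin B$ we have $1-x_i^{-1}=-x_i^{-1}(1-x_i)$, so
\[
\prod_{i\notin B}(1-x_i^{-1})=(-1)^{n-\rk(\M)}\prod_{i\notin B}x_i^{-1}\prod_{i\notin B}(1-x_i),
\]
using $|B|=\rk(\M)$.
\end{enumerate}

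Multiplying these two identities gives
\[
Q(C_\sigma)^\vee\prod_{i\notin B}(1-x_i^{-1})=(-1)^{2n-1-\rk(\M)}\,(\omega_\M)_\sigma^{-1}\,Q(C_\sigma)\prod_{i\notin B}(1-x_i).
\]
Multiplying by $(\omega_\M)_\sigma$ and noting $(-1)^{2n-1-\rk(\M)}=(-1)^{\rk(\M)-1}$ yields exactly the displayed identity. Summing over $\sigma$ then proves the theorem.

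There is one bookkeeping point I expect to be the only real obstacle: $f\cdot\omega_\M$ must be a legitimate element of $\PLP(\Sigma_n)$, so that $Q_\M(f\cdot\omega_\M)$ is defined and Theorem~\ref{thm:Polynomial} applies. Since products of piecewise Laurent polynomials are piecewise, it suffices to check that $\omega_\M$ is piecewise, i.e.\ compatible across walls of the braid fan. The factor $\sigma\mapsto x_{\sigma(n)}/x_{\sigma(1)}$ is the vertex-monomial function of the standard simplex difference $\Delta_{\text{last}}-\Delta_{\text{first}}$, hence piecewise. The factor $\prod_{i\notin B_\M(\sigma)}x_i^{-1}$ equals $x^{-e_{[n]}}x^{e_{B_\M(\sigma)}}$, which is piecewise because $\sigma\mapsto e_{B_\M(\sigma)}$ gives the vertices of the base polytope $P_\M$, itself a generalized permutohedron. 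With this in hand, the per-summand computation above is all that remains.
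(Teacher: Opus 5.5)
Your strategy is exactly the paper's one-line computation: verify the identity summand by summand, using $Q(C_\sigma)^\vee=(-1)^{n-1}\frac{x_{\sigma(n)}}{x_{\sigma(1)}}Q(C_\sigma)$ and $1-x_i^{-1}=-x_i^{-1}(1-x_i)$. Your identities (1) and (2) are correct. However, the bookkeeping around them contains two errors that cancel each other, so several of your displayed claims are false as written.

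The first error is the summand itself. The $\sigma$-summand of $Q_\M(f^\vee)$ is $f_\sigma^\vee\,Q(C_\sigma)\prod_{i\notin B}(1-x_i)$. Only $f$ gets inverted, not the cone factor or the matroid factor. Your left-hand side $f_\sigma^\vee Q(C_\sigma)^\vee\prod_{i\notin B}(1-x_i^{-1})$ is instead the $\vee$ of the $\sigma$-summand of $Q_\M(f)$. Your first display therefore amounts to $(\omega_\M)_\sigma=(-1)^{\rk(\M)-1}$, which is false in general. The $\sigma$-summand of $Q_\M(f\cdot\omega_\M)^\vee$ is $f_\sigma^\vee(\omega_\M)_\sigma^{-1}Q(C_\sigma)^\vee\prod_{i\notin B}(1-x_i^{-1})$. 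So after cancelling $f_\sigma^\vee$, the correct per-summand target is
\[
Q(C_\sigma)^\vee\prod_{i\notin B}(1-x_i^{-1}) \, = \, (-1)^{\rk(\M)-1}(\omega_\M)_\sigma\,Q(C_\sigma)\prod_{i\notin B}(1-x_i) \, .
\]
Solved for $Q(C_\sigma)\prod_{i\notin B}(1-x_i)$, this carries the coefficient $(-1)^{\rk(\M)-1}(\omega_\M)_\sigma^{-1}$. Your ``reduced identity'' has $(\omega_\M)_\sigma$ there instead, and it is false. For example, with $n=1$ and $\M$ the rank-$0$ matroid on $\{1\}$, it reads $1-x_1=-x_1^{-1}(1-x_1^{-1})$.

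The second error is in multiplying (1) and (2). The product produces the monomial $\frac{x_{\sigma(n)}}{x_{\sigma(1)}}\prod_{i\notin B}x_i^{-1}$, which is $(\omega_\M)_\sigma$ by definition, not $(\omega_\M)_\sigma^{-1}$. This slip is what made the false reduced identity look verified. Once it is corrected, the product of (1) and (2), with sign $(-1)^{2n-1-\rk(\M)}=(-1)^{\rk(\M)-1}$, is literally the correct target displayed above. The repaired argument is then complete and coincides with the paper's proof.

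Also write $f_\sigma^\vee$ rather than $f_\sigma^{-1}$, since the two differ when $f_\sigma$ is not a monomial. Your check that $\omega_\M$ is a piecewise Laurent polynomial on the braid fan is correct, and it supplies a point the paper takes for granted.
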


In the special setting where $\M$ is the Boolean matroid and $f$ is the piecewise Laurent monomial
associated to a generalized permutohedron $P$ (i.e., $f$ is the collection of vertex monomials), Theorem~\ref{thm:reciprocity} precisely recovers
Ehrhart reciprocity for $P$ upon specializing the variables all to $1$. More generally, for any loopless matroid~$\M$,
Theorem~\ref{thm:reciprocity} specializes to give a combinatorial justification of Serre duality of the Euler characteristic $\chi_\M$, answering a question of Larson, Li, Payne, and Proudfoot~\cite{larsonetal}.

\subsection{Acknowledgements}

The authors thank Chris Eur, Matt Larson, and Sam Payne for feedback on early drafts. D.R. was partially supported by NSF Grant DMS-2302024.


\section{Prerequisites}\label{sec:prerequisites}

In this section, we collect necessary background in order to establish notation and conventions that set the stage for the precise statements and proofs of our main results.

\subsection{Braid fans and generalized permutohedra}

Let $E$ be a finite set of size $n$ and let $\{e_i:i\in E\}$ denote the standard basis vectors on $\R^E$. We will often, but not always, take $E=[n]$. For each subset $T\subseteq E$, let $e_T\coloneqq \sum_{i\in T}e_i$ denote the indicator vector associated to $T$. The \Def{braid fan} $\Sigma_E$ is the collection of polyhedral cones delineated by the hyperplanes in $\R^E$ where two coordinates are equal. The cones of the braid fan $\Sigma_E$ are indexed by ordered set partitions $E=T_1\sqcup\dots\sqcup T_m$, where we define the cone $\widetilde C_{T_1|\cdots|T_m}\subseteq N_\R$ as the set of vectors $b\in \R^E$ such that $b_i=b_j$ if $i,j\in T_k$ for some $k$ and $b_i\geq b_j$ if $i\in T_k$ and $j\in T_{\ell}$ with $k<\ell$. The dimension of a cone in $\Sigma_E$ is the number of nonempty parts in the corresponding ordered set partition. Notice that each cone in $\Sigma_E$ has lineality space spanned by $1=e_E$. In particular, 
\[
\widetilde C_{T_1|\cdots|T_m} \, = \, \R 1+\R_{\geq 0} \left( e_{T_1},e_{T_1\cup T_2},\dots,e_{T_1\cup
T_2\cup\dots\cup T_{m-1}} \right).
\]
Let $S_E$ denote the set of bijections $\sigma:[n]\rightarrow E$. Then the maximal cones of the braid fan are indexed by $S_E$---given a bijection $\sigma\in S_E$, we denote the corresponding maximal cone by
\[
\widetilde C_\sigma
 \, \coloneqq  \, \left\{b\in \R^E: \, b_{\sigma(1)}\geq \dots\geq b_{\sigma(n)} \right\}
 \,  = \, \widetilde C_{\sigma\{1\}|\sigma\{2\}|\cdots|\sigma\{n\}}.
\]
We are using the notation $\widetilde C_\sigma$ here to distinguish these cones from their duals $C_\sigma$ that
appeared in the introduction and will reappear below. Notice that $\widetilde C_\sigma$ and $\widetilde C_{\sigma'}$ meet along a codimension-one face if and only if $\sigma'=\sigma\circ\tau_i$ for some adjacent transposition $\tau_i\in S_n$ swapping $i$ and $i+1$; in this case, $\widetilde C_\sigma$ and $\widetilde C_{\sigma'}$ meet along the hyperplane $x_{\sigma(i)}=x_{\sigma(i+1)}$. Figure~\ref{fig:braidfan} depicts the projection of $\Sigma_3$ (i.e., $E=[3]$) by its lineality space, with each hyperplane labeled by its defining equation and the maximal cones labeled by permutations $\sigma\in S_3$ written in single-line notation.

\begin{figure}[ht]
\begin{tikzpicture}[scale=2]
\fill[green!20,opacity=.4] (0,0) circle [radius=1];
\draw[ultra thick, gray, opacity=0.5,](0,0) -- (1,0);
\draw[ultra thick, gray, opacity=0.5,] (0,0) -- (1/2,1.732/2);
\draw[ultra thick, gray, opacity=0.5,] (0,0) -- (-1/2,1.732/2);
\draw[ultra thick, gray, opacity=0.5,] (0,0) -- (-1,0);
\draw[ultra thick, gray, opacity=0.5,] (0,0) -- (1/2,-1.732/2);
\draw[ultra thick, gray, opacity=0.5,] (0,0) -- (-1/2,-1.732/2);
\node[above right] at (1/2,1.732/2) {\small $x_2=x_3$};
\node[above left] at (-1/2,1.732/2) {\small $x_1=x_3$};
\node[left] at (-1,0) {\small $x_1=x_2$};
\node at (.5,1.732/6) {\small $\widetilde C_{321}$};
\node at (0,1.732/3) {\small $\widetilde C_{231}$};
\node at (-.5,1.732/6) {\small $\widetilde C_{213}$};
\node at (.5,-1.732/6) {\small $\widetilde C_{312}$};
\node at (0,-1.732/3) {\small $\widetilde C_{132}$};
\node at (-.5,-1.732/6) {\small $\widetilde C_{123}$};
\end{tikzpicture}
\caption{The braid fan $\Sigma_3$.}\label{fig:braidfan}
\end{figure}
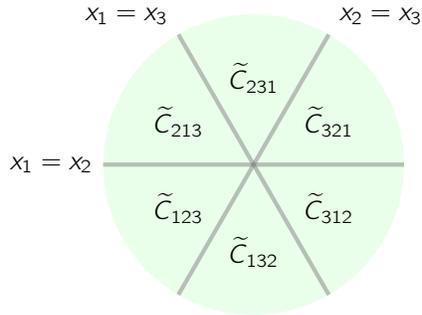

There are many ways to characterize generalized permutohedra \cite{ardilaetal,postnikovpermutahedra,postnikovreinerwilliams}. For our purposes, the most relevant characterization is that a lattice polytope $P\subset \R^E$ is a \Def{generalized permutohedron} if its (outward) normal fan $\Sigma_P$ coarsens the braid fan $\Sigma_E$.  Recall that, for each polytope $P\subset\R^E$ and each face $F$ of $P$, the \emph{affine face cone} of $F$ is $C_{P,F}\coloneqq a+\R_{\geq 0}(P-a)$, where $a$ is any element of the relative interior of $F$, and the \emph{outward normal cone} of $F$ is defined by
\[
\widetilde C_{P,F} \, \coloneqq  \, \left\{v\in\R^E:v\cdot w\leq 0\text{ for all }w\in C_{P,F}\right\}.
\]
The \emph{normal fan} $\Sigma_P$ of $P$ is the collection of all outward normal cones of nonempty faces of $F$, and
$P\subset \R^E$ is a generalized permutohedron precisely when each cone of the braid fan $\Sigma_E$ is contained in
some cone of the normal fan $\Sigma_P$. Figure~\ref{fig:normalfan} depicts a generalized permutohedron $P\subset\R^3$ and the projection of its normal fan by its lineality space.

\begin{figure}[ht]
\begin{tikzpicture}[scale=1.2]
\draw[thick,fill=green!20, fill opacity=.5] (1+1.732/2,1/2) -- (1-1.732/2,3/2) -- (1-1.732,1) -- (1-1.732,0) -- (1-1.732/2,-1/2) -- (1+1.732/2,1/2);

\node [right] at (1+1.732/2,1/2) {\tiny $v_1=(1,1,4)$};
\node [above] at (1-1.732/2,3/2) {\tiny $v_2=(1,3,2)$};
\node [above left] at (1-1.732,1) {\tiny $v_3=(2,3,1)$};
\node [below left] at (1-1.732,0) {\tiny$v_4=(3,2,1)$};
\node [below] at (1-1.732/2,-1/2) {\tiny $v_5=(3,1,2)$};
\node at (1/4,1/2) {$P$};
\end{tikzpicture}
\hspace{30bp}
\begin{tikzpicture}[scale=1.7]
\fill[green!20,opacity=.4] (0,0) circle [radius=1];
\draw[ultra thick, gray, opacity=0.5,] (0,0) -- (1/2,1.732/2);
\draw[ultra thick, gray, opacity=0.5,] (0,0) -- (-1/2,1.732/2);
\draw[ultra thick, gray, opacity=0.5,] (0,0) -- (-1,0);
\draw[ultra thick, gray, opacity=0.5,] (0,0) -- (1/2,-1.732/2);
\draw[ultra thick, gray, opacity=0.5,] (0,0) -- (-1/2,-1.732/2);
\node at (.5,0) {\small $\widetilde C_{P,v_1}$};
\node at (0,2/3) {\small $\widetilde C_{P,v_2}$};
\node at (-.6,1.732/6) {\small $\widetilde C_{P,v_3}$};
\node at (0,-2/3) {\small $\widetilde C_{P,v_5}$};
\node at (-.6,-1.732/6) {\small $\widetilde C_{P,v_4}$};
\end{tikzpicture}
\caption{A generalized permutohedron and its normal fan.}\label{fig:normalfan}
\end{figure}
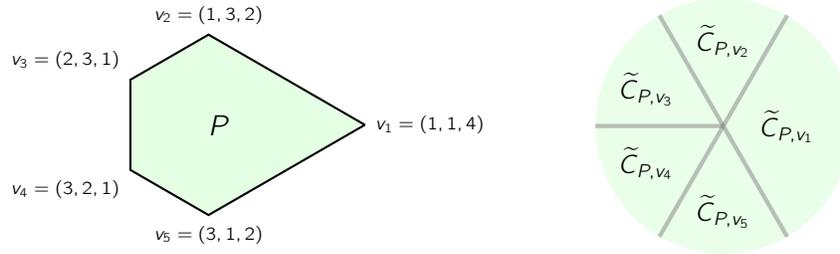

The maximal cones of $\Sigma_P$ are each of the form $\widetilde C_{P,v}$, where $v\in V(P)$ is a vertex of $P$. On the other hand, the maximal cones of the braid fan are indexed by bijections $\sigma\in S_E$. Thus, for every generalized permutohedron $P\subset\R^E$, there is a surjective function $v_P:S_E\rightarrow V(P)$, where $v_P(\sigma)$ is defined as the unique vertex of $P$ such that $\widetilde C_\sigma\subseteq \widetilde C_{P,v_P(\sigma)}$. Alternatively, given any vector $b\in \R^E$ with $b_{\sigma(1)}>\dots>b_{\sigma(n)}$, the vertex $v_P(\sigma)$ is the unique point of $P$ that has a maximal dot product with $b$. For the generalized permutohedron $P\subset\R^3$ depicted above, for example, $v_P(312)=v_P(321)=v_1$, while $v_P(123)=v_4$.

Lastly, we consider the case where $P\subset\R^E$ is a \emph{nondegenerate} generalized permutohedron, meaning that $\Sigma_P=\Sigma_E$. Then the function $v_P:S_n\rightarrow V(P)$ is a bijection, and the affine vertex cone $C_{P,v_P(\sigma)}$ is the translation of the outward normal cone of $\widetilde C_\sigma$ by $v_P(\sigma)$. Since the primitive outward normal vectors of $\widetilde C_\sigma$ are $e_{\sigma(2)}-e_{\sigma(1)},\dots,e_{\sigma(n)}-e_{\sigma(n-1)}$, it follows that 
\[
C_{P,v_P(\sigma)} \, = \, v_P(\sigma)+C_\sigma\;\;\;\text{ where }\;\;\;C_\sigma \, = \, \R_{\geq 0}(e_{\sigma(2)}-e_{\sigma(1)},\dots,e_{\sigma(n)}-e_{\sigma(n-1)}) \, ,
\]
as asserted in the introduction.

\subsection{Matroids} 

While there are many ways to characterize matroids \cite{oxley}, we choose one that is most useful for our purposes. A matroid $\M$ on a finite set $E$ is a nonempty collection of subsets of $E$, called \emph{bases},  which satisfy the basis-exchange property:
\[
\text{ for all }B_1,B_2\in \M \text{ and }i\in B_1\setminus B_2\text{, there exists } j\in B_2\setminus B_1\text{ such that }(B_1\setminus \{i\})\cup \{j\}\in \M.
\] 
We will often, but not always, take $E=[n]$.

Given a matroid $\M$ on $E$, we say that a subset $I\subseteq E$ is \emph{independent} if $I\subseteq B$ for some $B\in\M$.  The \emph{rank} $\rk_\M(T)$ of a subset $T\subseteq E$ is the size of the largest independent subset of $T$. The bases of $\M$ can be characterized as the maximal independent sets, and every basis has the same size, which is equal to the \emph{rank} of the matroid, denoted $\rk(\M)\coloneqq \rk_\M(E)$. 

An element $i\in E$ is a \emph{loop} of $\M$ if $i$ is not in any basis of $\M$, and $i$ is a \emph{coloop} of $\M$ if $i$ is in every basis of $\M$. Given a matroid $\M$ on $E$ and a subset $T\subseteq E$, the \emph{restriction} $\M|T$ is the matroid on $T$ whose bases are the maximal independent subsets of $T$, and the \emph{contraction} $\M/T$ is the matroid on $T^c$ whose bases are of the form $B\setminus T$ where $B$ is a basis of $\M$ that restricts to a basis of $\M|T$. The \emph{deletion} $\M\setminus T$ is the restriction of $\M$ to $T^c$.

The basis-exchange property implies that there is a surjection $B_\M:S_E\rightarrow\M$ where $B_\M(\sigma)\in\M$ is the \emph{greedy basis determined by $\sigma$} defined by
\[
B_\M(\sigma) \, \coloneqq  \, \big\{\sigma(i): \rk_\M\{\sigma(1),\dots,\sigma(i)\}>\rk_\M\{\sigma(1),\dots,\sigma(i-1)\}\big\}\, .
\]
Consider the \emph{base polytope} of $\M$, defined as the convex hull of the indicator vectors of bases:
\[
P_\M \, \coloneqq  \, \mathrm{conv} \left\{ e_B:B\in\M  \right\} \, \subset \, \R^E \, .
\]
The basis-exchange property implies that $e_{B_\M(\sigma)}\in P_\M$ is the unique element of $P_\M$ that has maximal dot product with any $b\in\R^E$ with $b_{\sigma(1)}>\dots>b_{\sigma(n)}$, and it follows that $P_M$ is a generalized permutohedron with $v_{P_\M}(\sigma)=e_{B_\M(\sigma)}$, as asserted in the introduction.

\subsection{Piecewise Laurent polynomials} 

Let $E$ be a set of size $n$. A function $f:\R^E\rightarrow\R$ is \emph{integral linear on $\R^E$} if it is linear and $f(\Z^E)\subseteq \Z$. A function $f:\R^E\rightarrow\R$ is called \emph{(integral) piecewise linear on $\Sigma_E$} if, for every cone $\widetilde C\in\Sigma_E$, the restriction $f|_{\widetilde C}$ is equal to the restriction of an integral linear function on $\R^E$. More specifically, a piecewise linear function is the data of an integral linear function $m_\sigma$ associated to each maximal cone $\widetilde C_\sigma\in\Sigma_E$ such that, for any adjacent transposition $\tau_i\in S_n$ swapping $i$ and $i+1$, the linear functions $m_\sigma$ and $m_{\sigma\circ\tau_i}$ agree along the hyperplane defined by $x_{\sigma(i)}=x_{\sigma(i+1)}$. Piecewise linear functions on $\Sigma_E$ form a group under addition, denoted $\PL(\Sigma_E)$. If $P\subset\R^E$ is a generalized permutohedron, then the collection of vertices $\{v_P(\sigma):\sigma\in S_E\}$ gives rise to a piecewise linear function $s_P\in \PL(\Sigma_E)$, called the \emph{support function of $P$}, where $(s_P)_\sigma:\R^E\rightarrow\R$ is the integral linear function defined by $(s_P)_\sigma(b)=v_P(\sigma)\cdot b$. 

For our purposes, it is more useful to work with a multiplicative version of the group of piecewise linear functions. For each integral linear function $m:\R^E\rightarrow\R$, define $x^m\coloneqq \prod_{i\in E}x_i^{m_i(e_i)}$. We say that a collection $\{x^{m_\sigma}:\sigma\in\Sigma_E\}$ is a \emph{piecewise Laurent monomial on $\Sigma_E$} if, for every $i\in[n-1]$, the Laurent polynomial $x^{m_\sigma}-x^{m_{\sigma\circ\tau_i}}$ is divisible by $x_{\sigma(i)}-x_{\sigma(i+1)}$. Piecewise Laurent monomials form a group under multiplication, denoted $\PLM(\Sigma_E)^*$ and there is a natural group isomorphism between $\PL(\Sigma_E)$ and $\PLM(\Sigma_E)^*$ that sends $f=\{m_\sigma:\sigma\in S_E\}$ to $x^f\coloneqq \{x^{m_\sigma}:\sigma\in S_E\}$. In particular, associated to each generalized permutohedron $P\subset\R^E$, we obtain a piecewise Laurent monomial $f_P=\{x^{v_P(\sigma)}:\sigma\in S_E\}\in\PLM(\Sigma_E)^*$.

More generally,  a collection $\{f_\sigma\in\Z[x_i^{\pm 1}:i\in E]:\sigma\in S_E\}$ is a \emph{piecewise Laurent polynomial on $\Sigma_E$} if, for every $i\in[n-1]$, the Laurent polynomial $f_\sigma-f_{\sigma\circ\tau_i}$ is divisible by $x_{\sigma(i)}-x_{\sigma(i+1)}$. Piecewise Laurent polynomials form a ring, denoted $\PLP(\Sigma_E)$, which contains piecewise Laurent monomials $\PLM(\Sigma_E)^*$ as the multiplicative subgroup of units. To close this subsection, we observe that piecewise Laurent polynomials on $\Sigma_E$ are spanned by the piecewise Laurent monomials.

\begin{lemma}\label{lemma:span}
$\PLM(\Sigma_E)^*$ spans $\PLP(\Sigma_E)$.
\end{lemma}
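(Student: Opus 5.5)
We need to show that every piecewise Laurent polynomial on $\Sigma_E$ is a $\Z$-linear combination of piecewise Laurent monomials. The plan is to peel off the constant term of one coordinate at a time, using piecewise monomials of the form $\{x^{m_\sigma}\}$ with $m_\sigma$ piecewise linear. An equivalent route is to use the standard description of $\PLP(\Sigma_E)$ as the equivariant $K$-theory of the permutohedral toric variety, which is generated by line bundles. I will argue directly instead.

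\textbf{Step 1: reduce to the polynomial case.} Given $f=\{f_\sigma\}\in\PLP(\Sigma_E)$, multiply by a suitable piecewise monomial so that every $f_\sigma$ becomes a polynomial. For instance, the constant piecewise monomial $\{x^{N e_E}\}$ works for $N$ large, since global monomials are piecewise monomials. It then suffices to show that every piecewise Laurent polynomial $f$ with all $f_\sigma$ in $\Z[x_i:i\in E]$ lies in the span.

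\textbf{Step 2: induct using the filtration by cones.} Order the cones of $\Sigma_E$ (ordered set partitions) by dimension, or work directly on the maximal cones. The key observation concerns the flags $\emptyset\subset S_1\subset\dots\subset S_{n-1}\subset E$. For each subset $S$, the element
\[
\Big\{\prod_{i\in S}(1-x_i^{-1})\ \text{if}\ \sigma\{1,\dots,|S|\}=S,\ \text{else } 0\Big\}
\]
should be expressible via piecewise monomials. Such "Schubert-type" classes supported on stars of cones span, by an upper-triangularity argument. The standard strategy for $\Q$- or $\Z$-coefficients is as follows. If $f_\sigma$ vanishes at all cones before $\sigma$ in some shelling order of the permutohedron, then the divisibility conditions force $f_\sigma$ to be divisible by $\prod(x_{\sigma(i)}-x_{\sigma(i+1)})$ over the descents relevant to the shelling. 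One then subtracts $f_\sigma/(\text{that product})$ times a fixed piecewise element $g^\sigma$ supported on cones $\geq\sigma$ whose $\sigma$-component is exactly that product. The element $g^\sigma$ is a product of factors $\{1-x^{m^{(T)}}\}$, where $x^{m^{(T)}}$ is the piecewise monomial of the facet-slid polytope (a simplex-like generalized permutohedron $\Delta_T$, e.g. the support of $e_T$). Since each factor $1-x^{m}$ is a difference of piecewise monomials, the product is a combination of piecewise monomials.

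\textbf{Step 3: close the induction.} Multiplying $g^\sigma$ by the global Laurent polynomial $f_\sigma/\prod(\cdots)$, which is a combination of global monomials and hence of piecewise monomials, keeps us in the span. Inducting along the shelling order of $S_E$ (for example, by the weak Bruhat order, i.e. number of inversions) kills $f$ entirely.

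\textbf{Main obstacle.} The hardest step is constructing $g^\sigma$ with the correct support and leading factor, together with the divisibility claim. For $g^\sigma$ I would take
\[
g^\sigma=\prod_{i\,:\,\sigma(i)>\sigma(i+1)}\big(1-\{x^{\pm e_{\sigma\{1..i\}}}\ \text{piecewise}\}\big),
\]
which uses the piecewise monomial $\sigma'\mapsto x^{e_{\sigma'\{1..i\}}\cdot}$ coming from the hypersimplex/indicator generalized permutohedra. I would then check that its $\sigma'$-component vanishes unless $\sigma'\geq\sigma$ in Bruhat order, mirroring the construction of equivariant Schubert classes on the permutohedral variety.
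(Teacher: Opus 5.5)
Your architecture is sound: induct over $S_E$ ordered by number of inversions, and at each $\sigma$ subtract a Laurent-polynomial multiple of a class $g^\sigma$ that is supported on $\sigma$ and later permutations and indexed by $\mathrm{Des}(\sigma)=\{i:\sigma(i)>\sigma(i+1)\}$. The gap is that the classes you actually write down fail, and everything rests on that step.

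\textbf{Where the proposal breaks.} The ``Schubert-type'' element equal to $\prod_{i\in S}(1-x_i^{-1})$ on those $\sigma$ with $\sigma\{1,\dots,k\}=S$ ($k=|S|<n$), and $0$ elsewhere, is not in $\PLP(\Sigma_E)$. Across the wall between such a $\sigma$ and $\sigma\circ\tau_k$ the difference is $\prod_{i\in S}(1-x_i^{-1})$, which does not vanish on $x_{\sigma(k)}=x_{\sigma(k+1)}$. Your $g^\sigma$ fails too, under either reading of its factor: the global monomial $x^{\pm e_{\sigma\{1,\dots,i\}}}$, or the hypersimplex vertex monomial $\sigma'\mapsto x^{e_{\sigma'\{1,\dots,i\}}}$. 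In both cases each factor $1-x^{(\cdot)}$ is a nonzero Laurent polynomial on \emph{every} cone, so $g^\sigma$ has full support and there is no triangularity.

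\textbf{The repair.} You need a piecewise monomial that is identically $1$ off the star of the ray $e_T$. That is $x^{\pm\delta_T}$, where $\delta_T\in\PL(\Sigma_E)$ is the tent function with $\delta_T(e_T)=1$ and $\delta_T(e_S)=0$ for $S\neq T$. Concretely, $(x^{-\delta_T})_{\sigma'}=x_{\sigma'(k+1)}/x_{\sigma'(k)}$ if $\sigma'\{1,\dots,k\}=T$, and $1$ otherwise. This is the ratio $f_T/f$ of the paper's facet-sliding operation, not the vertex monomial of a (slid) polytope. Now set
\[
g^\sigma=\prod_{i\in\mathrm{Des}(\sigma)}\bigl(1-x^{-\delta_{\sigma\{1,\dots,i\}}}\bigr),\qquad g^\sigma_\sigma=\prod_{i\in\mathrm{Des}(\sigma)}\Bigl(1-\frac{x_{\sigma(i+1)}}{x_{\sigma(i)}}\Bigr).
\]
The needed properties then hold:
\begin{itemize}
\item The $\sigma'$-component of $g^\sigma$ vanishes unless $\sigma'\{1,\dots,i\}=\sigma\{1,\dots,i\}$ for every $i\in\mathrm{Des}(\sigma)$, i.e.\ unless $\sigma'$ only permutes entries within the blocks of positions between consecutive descents.
\item Every such $\sigma'\neq\sigma$ has strictly more inversions. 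Indeed, $\sigma$ is increasing on each block, and inversions between blocks depend only on the blocks' value sets.
\item If $f$ vanishes at all earlier permutations, then each $\sigma\circ\tau_i$ with $i\in\mathrm{Des}(\sigma)$ is earlier, so each $x_{\sigma(i)}-x_{\sigma(i+1)}$ divides $f_\sigma$.
\item These are pairwise non-associate primes in the UFD $\Z[x_i^{\pm1}:i\in E]$. So $g^\sigma_\sigma$ divides $f_\sigma$ with Laurent-polynomial quotient, and your Step~3 finishes. Your Step~1 is unnecessary.
\end{itemize}

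\textbf{Comparison with the paper.} Once repaired, your proof takes a genuinely different route. The paper attaches to each cone $\widetilde C$ an element $f_{\widetilde C}$ whose exponents agree with those of $f$ at the rays of $\widetilde C$ and vanish at all other rays. It then recovers $f$ by inclusion--exclusion over the face poset. That local argument is the one the paper later adapts to the non-complete fan $\Sigma_\M$. Yours relies on a shelling of the complete fan $\Sigma_E$, but proves more: $\{g^\sigma\}$ is a basis of $\PLP(\Sigma_E)$ as a free $\Z[x_i^{\pm1}:i\in E]$-module.
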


\begin{proof}
A piecewise linear function on $\Sigma_E$ is freely and uniquely determined by its values at $e_T$ with $\emptyset\subset T\subseteq E$. Given an integral linear function $m:\R^n\rightarrow\R$ and $\widetilde C\in\Sigma_E$, we let $m_{\widetilde C}\in\PL(\Sigma_E)$ denote the piecewise linear function determined by
\[
m_{\widetilde C}(e_T) \, = \, \begin{cases}
m(e_T) & e_T\in\widetilde C,\\
0 & \text{else.}
\end{cases}
\]
Similarly, for any Laurent polynomial $f=\sum_{i=1}^k a_ix^{m_i}\in\Z[x_i^{\pm 1}:i\in E]$, we define a piecewise Laurent polynomial $f_{\widetilde C}\in\PLP(\Sigma_E)$ by
\[
f_{\widetilde C} \, = \, \sum_{i=1}^k a_ix^{(m_i)_{\widetilde C}}.
\]
If $f=\{f_\sigma\}\in\PLP(\Sigma_E)$ and $\widetilde C\subseteq \widetilde C_{\sigma_1}\cap\widetilde C_{\sigma_2}$, then $(f_{\sigma_1})_{\widetilde C}=(f_{\sigma_2})_{\widetilde C}$, and we denote this element as $f_{\widetilde C}\in\PLP(\Sigma_E)$. Intuitively, each $f_{\widetilde C}$ is designed to agree with $f$ on $\widetilde C$ and to fade to the constant polynomial $\sum_{i=1}^k a_i$ on all cones that are not adjacent to $\widetilde C$. Notice that each $f_{\widetilde C}$ is in the span of $\PLM(\Sigma_E)^*$, and a straightforward inclusion-exclusion argument on the poset of cones in $\Sigma_E$ shows that $f$ can be written as a linear combination of $\{f_{\widetilde C}:\widetilde C\in\Sigma_E\}$.
\end{proof}


\section{Polynomiality}\label{sec:polynomiality}

In this section, we prove Theorem~\ref{thm:Polynomial}, which we state here in its most general form.

\begin{theorem}\label{thm:finite2}
Let $E$ be a set of size $n$. Let $\M$ be a matroid on $E$ and let $f\in \PLP(\Sigma_E)$ be a piecewise Laurent polynomial on the braid fan $\Sigma_E$. Define 
\[
Q_\M(f) \, \coloneqq  \sum_{\sigma\in S_E}f_\sigma \, Q(C_\sigma)\prod_{i\notin B_\M(\sigma)}(1-x_i)\;\;\;\text{ where
}\;\;\;Q(C_\sigma) \, \coloneqq  \, \frac{1}{\prod_{i=1}^{n-1}\big(1-\frac{x_{\sigma(i+1)}}{x_{\sigma(i)}}\big)} \, .
\]
Then $Q_\M(f)\in\Z[x_i^{\pm 1}:i\in E]$.
\end{theorem}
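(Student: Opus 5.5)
Since $Q_\M(f)$ is a rational function whose denominator is a product of factors $x_{\sigma(i)}-x_{\sigma(i+1)}$ (after clearing monomials), it suffices to show that for each pair $a\neq b$ in $E$ the sum has no pole along $x_a=x_b$. Concretely, write
\[
Q(C_\sigma)=\prod_{i=1}^{n-1}\frac{x_{\sigma(i)}}{x_{\sigma(i)}-x_{\sigma(i+1)}},
\]
so every summand lies in $\Z[x_i^{\pm1}]\bigl[\prod_{a<b}(x_a-x_b)^{-1}\bigr]$. The common denominator $\prod_{a<b}(x_a-x_b)$ has pairwise coprime irreducible factors in the UFD $\Z[x_i^{\pm1}]$. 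Hence it is enough to show that, for each fixed pair $\{a,b\}$, the sum has at most a removable singularity along $x_a=x_b$, i.e. multiplying by a power of $(x_a-x_b)$ is unnecessary.

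Fix $\{a,b\}$. The factor $x_a-x_b$ appears in the denominator of $Q(C_\sigma)$ only when $a,b$ are adjacent in $\sigma$, i.e. $\{\sigma(i),\sigma(i+1)\}=\{a,b\}$, and then to the first power. Pair each such $\sigma$ with $\sigma'=\sigma\circ\tau_i$; this is a fixed-point-free involution on the relevant permutations. The other summands are regular along $x_a=x_b$, so it suffices to show each pair sum $S_\sigma+S_{\sigma'}$ is regular there. Write $S_\sigma = g_\sigma\cdot \frac{x_{\sigma(i)}}{x_{\sigma(i)}-x_{\sigma(i+1)}}$, where
\[
g_\sigma=f_\sigma\prod_{j\neq i}\frac{x_{\sigma(j)}}{x_{\sigma(j)}-x_{\sigma(j+1)}}\prod_{k\notin B_\M(\sigma)}(1-x_k).
\]
Then
\[
S_\sigma+S_{\sigma'}=\frac{x_a g_\sigma - x_b g_{\sigma'}}{x_a-x_b}
\]
(taking $\sigma(i)=a$), and we need the numerator to vanish on $x_a=x_b$.

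The middle product over $j\neq i$ is identical for $\sigma,\sigma'$ after setting $x_a=x_b$. The neighbors $\sigma(i-1)$ and $\sigma(i+2)$ are shared, and the factors involving $a$ or $b$ swap roles, which agree modulo $x_a-x_b$. The factor $f_\sigma\equiv f_{\sigma'}$ modulo $x_a-x_b$ by the definition of $\PLP(\Sigma_E)$.

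The remaining and main point is the matroid factor. We must check that $\prod_{k\notin B_\M(\sigma)}(1-x_k)\equiv\prod_{k\notin B_\M(\sigma')}(1-x_k)$ mod $x_a-x_b$. Greedy bases of $\sigma$ and $\sigma'$ agree outside $\{a,b\}$, since the prefix sets before position $i$ and after position $i+1$ coincide. On $\{a,b\}$ the rank increments at positions $i,i+1$ have the same total. So either $B_\M(\sigma)\cap\{a,b\}=B_\M(\sigma')\cap\{a,b\}$, or the two bases differ by swapping $a$ for $b$. In both cases the complements differ at most by exchanging $a\leftrightarrow b$, so the products agree after setting $x_a=x_b$. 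Thus $x_a g_\sigma-x_b g_{\sigma'}$ vanishes at $x_a=x_b$, it is divisible by $x_a-x_b$, and no pole along $x_a=x_b$ survives.

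Running over all pairs $\{a,b\}$ then gives $Q_\M(f)\in\Z[x_i^{\pm1}:i\in E]$. I expect the delicate step to be the bookkeeping that the neighboring factors of $Q(C_\sigma)$ and the greedy-basis comparison match exactly modulo $x_a-x_b$. The cleanest way is to observe that everything in $g_\sigma$ is the image of $g_{\sigma'}$ under the transposition swapping $x_a,x_b$, up to the piecewise condition on $f$.
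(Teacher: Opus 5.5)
Your proof is correct, but it follows a different route from the paper's.

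You prove polynomiality by local pole cancellation. Each wall $x_a=x_b$ is crossed by pairs $\sigma,\sigma\circ\tau_i$. The two numerators agree modulo $x_a-x_b$ for three reasons: $f$ is piecewise, the neighbouring cone factors swap roles, and greedy bases of adjacent chambers agree up to the swap $a\leftrightarrow b$. Primality of the pairwise non-associate factors $x_a-x_b$ in the UFD $\Z[x_i^{\pm1}]$ then turns these wall-by-wall statements into a global one.

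In effect you show that $\sigma\mapsto f_\sigma\prod_{k\notin B_\M(\sigma)}(1-x_k)$ is itself a piecewise Laurent polynomial. You also give a self-contained proof that $\sum_\sigma F_\sigma Q(C_\sigma)$ is Laurent for every $F\in\PLP(\Sigma_E)$. This is exactly the reduction the paper mentions (via Brion's formula for virtual polytopes, or $K$-theoretic localization) and deliberately does not use.

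The paper instead inducts on $n$. It groups $S_n$ into the fibers $S_n(\mu)$ over chambers of $\Sigma_{n-1}$ and computes the partial sums of the $Q(C_{\mu_i})$ explicitly. It tracks the greedy bases along each fiber through the threshold $k_\M(\mu)$. The laborious part, via the divisibility and threshold lemmas, is showing that the resulting collection $\{f_\mu\}$ is again piecewise on $\Sigma_{n-1}$.

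Your argument is shorter, more general, and conceptually transparent, but it is non-constructive: it certifies that the poles cancel without identifying the resulting polynomial. The paper's argument yields an explicit recursive formula for $Q_\M(f)$. From that formula it extracts a new proof of Brion's formula for generalized permutohedra and the closed form $Q_\M(1)=\prod_{i\text{ a loop}}(1-x_i)$. That closed form is used later for the normalization $\widetilde\chi_\M^*(0)=1$ in Theorem~\ref{thm:euler}, so your approach would need a separate computation to supply it.
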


We now set up notation for our proof of Theorem~\ref{thm:finite2}. Without loss of generality, we assume $E=[n]$, and prove polynomiality inductively on $n$. Fix a permutation $\mu\in S_{n-1}$, which we write in single-line notation, and let $S_n(\mu)\subseteq S_n$ be the set of
permutations obtained from $\mu$ by inserting $n$ anywhere in $\mu$. Let $\mu_i$ denote the unique element of
$S_n(\mu)$ with $\mu_i(i)=n$ (in other words, the one where $n$ appears in the $i$th position).  For example,
if $\mu=123\in S_3$, then 
\[
S_4(\mu) \, = \, \left\{\mu_4=1234,\mu_3=1243,\mu_2=1423,\mu_1=4123\right\} .
\] 
As depicted in Figure~\ref{fig:quotient}, taking the quotient of $\R^n$ by $\R e_n$ naturally maps $\Sigma_n$ onto $\Sigma_{n-1}$, and $S_n(\mu)$ indexes the maximal cones in $\Sigma_n$ lying over the maximal cone in $\Sigma_{n-1}$ indexed by $\mu$. In terms of a generalized permutohedron $P$, the partitions $\mu_1,\dots,\mu_n$ trace a path of vertices beginning in the face where $x_n$ is maximal and ending in the face where $x_n$ is minimal.

\begin{figure}[ht]
\begin{tikzpicture}[scale=2]
\draw[ultra thick, gray, opacity=0.5](0,0) -- (0,1);
\node at (.2,.5) {$\widetilde C_\mu$};
\draw[ultra thick, gray, opacity=0.5] (0,0) -- (0,-1);
\node at (.2,-.5) {$\widetilde C_\nu$};
\node[gray, opacity=0.8] at (0,0) {$\bullet$};
\end{tikzpicture}
\hspace{40bp}
\begin{tikzpicture}[scale=2]
\fill[green!20,opacity=.4] (0,0) circle [radius=1];
\draw[ultra thick, gray, opacity=0.5,](0,0) -- (1,0);
\draw[ultra thick, gray, opacity=0.5,] (0,0) -- (1/2,1.732/2);
\draw[ultra thick, gray, opacity=0.5,] (0,0) -- (-1/2,1.732/2);
\draw[ultra thick, gray, opacity=0.5,] (0,0) -- (-1,0);
\draw[ultra thick, gray, opacity=0.5,] (0,0) -- (1/2,-1.732/2);
\draw[ultra thick, gray, opacity=0.5,] (0,0) -- (-1/2,-1.732/2);
\node at (.5,1.732/6) {\small $\widetilde C_{\mu_1}$};
\node at (0,1.732/3) {\small $\widetilde C_{\mu_2}$};
\node at (-.5,1.732/6) {\small $\widetilde C_{\mu_3}$};
\node at (.5,-1.732/6) {\small $\widetilde C_{\nu_1}$};
\node at (0,-1.732/3) {\small $\widetilde C_{\nu_2}$};
\node at (-.5,-1.732/6) {\small $\widetilde C_{\nu_3}$};
\end{tikzpicture}
\hspace{40bp}
\raisebox{8bp}{
\begin{tikzpicture}[scale=1.2]
\draw[thick,fill=green!20, fill opacity=.5] (1+1.732/2,1/2) -- (1-1.732/2,3/2) -- (1-1.732,1) -- (1-1.732,0) -- (1-1.732/2,-1/2) -- (1+1.732/2,1/2);
\node [right] at (1+1.732/2,1/2) {\tiny $v_P(\mu_1)=v_P(\nu_1)$};
\node [above] at (1-1.732/2,3/2) {\tiny $v_P(\mu_2)$};
\node [above left] at (1-1.732,1) {\tiny $v_P(\mu_3)$};
\node [below left] at (1-1.732,0) {\tiny$v_P(\nu_3)$};
\node [below] at (1-1.732/2,-1/2) {\tiny $v_P(\nu_2)$};
\node at (1/4,1/2) {$P$};
\end{tikzpicture}}
\caption{Geometric interpretations of $S_n(\mu)$.}\label{fig:quotient}
\end{figure}
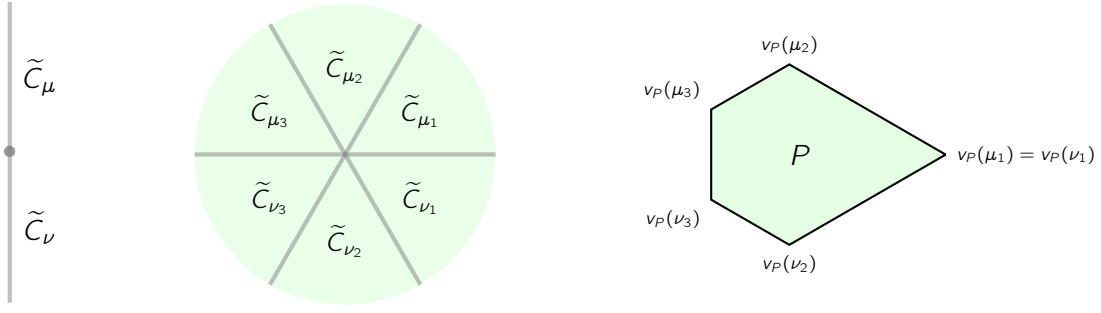

Given a piecewise Laurent polynomial $f=\{f_\sigma:\sigma\in S_n\}\in \PLP(\Sigma_n)$, we define Laurent polynomials $g_{\mu,j}\in\Z[x_1^{\pm 1},\dots,x_n^{\pm 1}]$ for $\mu\in S_{n-1}$ and $j=1,\dots,n-1$ by the formula
\[
g_{\mu,j} \, \coloneqq \, \frac{f_{\mu_j}-f_{\mu_{j+1}}}{x_n-x_{\mu(j)}}\, .
\]
The $g_{\mu,j}$ are well defined because $\mu_j=\mu_{j+1}\circ\tau_j$ with $\mu_{j}(j)=n$ while $\mu_{j}(j+1)=\mu(j)$. Furthermore, for each $\mu\in S_{n-1}$, we define the Laurent polynomial
\[
h_\mu \, \coloneqq  \, x_n \, g_{\mu,1}+\dots+x_n \, g_{\mu,n-1}+f_{\mu_n} \, .
\]
Finally, if $n$ is neither a loop nor a coloop of $\M$, then for each $\mu\in S_{n-1}$, we define
\[
k_\M(\mu) \, \coloneqq  \, \max\left\{i:\rk(\mu(1),\dots,\mu(i-1))<\rk(\mu(1),\dots,\mu(i-1),n)\right\},
\]
so that $k_\M(\mu)$ is the largest index $i\in[n-1]$ such that $n$ is in the greedy bases $B_\M(\mu_i)$.

With notation as above, Theorem~\ref{thm:finite2} follows from induction on $n$ and the following result.

\begin{proposition}\label{prop:indstep}
Let $f=\{f_\sigma:\sigma\in S_n\}\in \PLP(\Sigma_n)$ be a piecewise Laurent polynomial on $\Sigma_n$. For any $\mu\in S_{n-1}$, 
\[
\sum_{\sigma\in S_n(\mu)}f_\sigma \, Q(C_\sigma)\prod_{i\notin B_\M(\sigma)}(1-x_i) \, = \, f_\mu \,
Q(C_\mu)\prod_{i\notin B_{\M\setminus\{n\}}(\mu)}(1-x_i) \, ,
\]
where 
\begin{enumerate}[leftmargin=*]
\item the Laurent polynomials $f_\mu$ are given by the formulas
\[
\hspace{20bp}f_\mu=\begin{cases}
h_\mu  \hfill \text{if }n\text{ is a coloop of }\M,&\\
(1-x_n)h_\mu \hfill\text{if }n\text{ is a loop of }\M,\text{ and\phantom,} &\\
h_\mu-x_n \left( x_{\mu(k)}g_{\mu,1}+\dots+x_{\mu(k)} g_{\mu,k}+x_{\mu(k+1)}g_{\mu,k+1}+\dots+x_{\mu(n-1)}g_{\mu,n-1}
\right)\;\;\;\;\;\;\;\;\;\;\;\;\;\text{else,}&
\end{cases}
\]
where, in the third case, $k=k_\M(\mu)$;
\item the collection $\{f_\mu:\mu\in S_{n-1}\}$ is an element of $\PLP(\Sigma_{n-1})\otimes_\Z\Z[x_n^{\pm 1}]$.
\end{enumerate}
\end{proposition}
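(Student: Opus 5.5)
The plan is to fix $\mu\in S_{n-1}$ and compare each summand indexed by $\mu_i\in S_n(\mu)$ with the single term indexed by $\mu$. This turns the identity into a one-variable partial-fraction computation in $x_n$, with $x_1,\dots,x_{n-1}$ fixed. Set $x_{\mu(0)}\coloneqq\infty$ and $x_{\mu(n)}\coloneqq 0$, so that the boundary factors below read $1$. Inserting $n$ into position $i$ of $\mu$ replaces the factor $(1-x_{\mu(i)}/x_{\mu(i-1)})$ of $Q(C_\mu)$ by $(1-x_n/x_{\mu(i-1)})(1-x_{\mu(i)}/x_n)$. Hence
\[
Q(C_{\mu_i}) \, = \, Q(C_\mu)\, R_i,\qquad R_i\coloneqq \frac{1-x_{\mu(i)}/x_{\mu(i-1)}}{(1-x_n/x_{\mu(i-1)})(1-x_{\mu(i)}/x_n)} \, .
\]

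Next I would describe the greedy bases $B_\M(\mu_i)$ in terms of $B\coloneqq B_{\M\setminus\{n\}}(\mu)$. If $n$ is a coloop, then $B_\M(\mu_i)=B\cup\{n\}$. If $n$ is a loop, then $B_\M(\mu_i)=B$. Otherwise, with $k=k_\M(\mu)$, the definition of $k$ shows that $n\notin\mathrm{cl}\{\mu(1),\dots,\mu(k-1)\}$ but $n\in\mathrm{cl}\{\mu(1),\dots,\mu(k)\}$. It follows that $\mu(k)\in B$, and that $B_\M(\mu_i)=(B\setminus\{\mu(k)\})\cup\{n\}$ for $i\le k$, while $B_\M(\mu_i)=B$ for $i>k$. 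Consequently, after dividing by $Q(C_\mu)\prod_{i\notin B}(1-x_i)$, the left-hand side becomes $\sum_i c_i f_{\mu_i}R_i$. Here $c_i=1$ in the coloop case and $c_i=1-x_n$ in the loop case. In the generic case $c_i=1-x_{\mu(k)}$ for $i\le k$ and $c_i=1-x_n$ for $i>k$.

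The core computation is to telescope. I would write $f_{\mu_i}=f_{\mu_n}+\sum_{j\ge i}(x_n-x_{\mu(j)})g_{\mu,j}$ and swap the order of summation, which reduces everything to the partial sums $S_j\coloneqq\sum_{i\le j}R_i$. A direct check (the one-dimensional Brion formula) gives
\[
S_j \, = \, \frac{1}{1-x_n/x_{\mu(j)}} \, = \, \frac{x_{\mu(j)}}{x_{\mu(j)}-x_n}\quad\text{for } j<n, \qquad S_n=1 .
\]
Then $\sum_i f_{\mu_i}R_i=f_{\mu_n}+\sum_j (x_n-x_{\mu(j)})g_{\mu,j}S_j=f_{\mu_n}-\sum_j x_{\mu(j)}g_{\mu,j}$. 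One verifies that this equals $h_\mu$ modulo the relation $\sum_j(x_n-x_{\mu(j)})g_{\mu,j}=f_{\mu_1}-f_{\mu_n}$. More precisely, I would track the terms so that both the coefficient $x_n$ and the coefficient $x_{\mu(j)}$ appear: in the coloop case the exponents work out to give exactly $h_\mu$ as stated, and the loop case is then immediate.

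For the generic case, I write $c_i=(1-x_n)+(x_n-x_{\mu(k)})[i\le k]$. The correction term $(x_n-x_{\mu(k)})\sum_{i\le k}f_{\mu_i}R_i$ is computed by the same telescoping, now truncated at $k$. The pole $S_k=x_{\mu(k)}/(x_{\mu(k)}-x_n)$ cancels against the prefactor $x_n-x_{\mu(k)}$. This should produce exactly the $x_{\mu(k)}g_{\mu,j}$ ($j\le k$) versus $x_{\mu(j)}g_{\mu,j}$ ($j>k$) pattern in the stated formula.

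For part (2), I must show that $f_\mu-f_{\mu\circ\tau_j}$ is divisible by $x_{\mu(j)}-x_{\mu(j+1)}$. There is a cleaner route than expanding the formula directly. The left-hand side of the identity, summed over $S_n(\mu)\cup S_n(\mu\circ\tau_j)$, is the restriction to a neighborhood of a wall. Moreover, $f_\mu$ is $Q(C_\mu)^{-1}\prod_{i\notin B}(1-x_i)^{-1}$ times that partial sum, and the partial sum can have no pole along $x_{\mu(j)}=x_{\mu(j+1)}$ beyond the one from $Q(C_\mu)$, because the $f_\sigma$ glue across walls. I would still fall back on a direct check using the formulas: $h_\mu$ clearly glues, since it is a specialization-type combination of the $f_\sigma$. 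The main obstacle is the generic case when $k_\M(\mu)\ne k_\M(\mu\circ\tau_j)$. This can happen only when $\{j,j+1\}\ni k$, and then $x_{\mu(k)}$ switches between $x_{\mu(j)}$ and $x_{\mu(j+1)}$, which is congruent modulo the divisor. So the difference should vanish modulo $x_{\mu(j)}-x_{\mu(j+1)}$ after using the gluing relations for the $g$'s. I expect this case analysis to be the delicate step.
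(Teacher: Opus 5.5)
\textbf{Part (1) rests on a wrong partial-sum formula.} Your plan is the paper's plan: factor $Q(C_{\mu_i})=Q(C_\mu)R_i$, describe $B_\M(\mu_i)$ through $k=k_\M(\mu)$, telescope with the $g_{\mu,j}$, and reduce to partial sums of the $R_i$. But the one computational input is wrong, and the error propagates. The head sums are
\[
S_j=\sum_{i\le j}R_i=\frac{1}{1-x_{\mu(j)}/x_n}=\frac{x_n}{x_n-x_{\mu(j)}}\qquad (j<n).
\]
Your value $\frac{1}{1-x_n/x_{\mu(j)}}$ is the complementary tail $\sum_{i>j}R_i$; already $S_1=R_1=1/(1-x_{\mu(1)}/x_n)$ shows this.

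With your value, the telescoped sum is $f_{\mu_n}-\sum_j x_{\mu(j)}g_{\mu,j}=f_{\mu_1}-x_n\sum_j g_{\mu,j}$. This is not $h_\mu$, and no use of the relation $\sum_j (x_n-x_{\mu(j)})g_{\mu,j}=f_{\mu_1}-f_{\mu_n}$ makes it so. Take $n=2$, $\M$ Boolean, and $f$ the vertex monomials of $\mathrm{conv}(e_1,e_2)$, so $f_{\mu_1}=x_2$, $f_{\mu_2}=x_1$, $g_{\mu,1}=1$. Your expression gives $0$, but $h_\mu=x_1+x_2$. ``Tracking the terms'' cannot fix this. In the generic case the same error makes the cancelled pole contribute $-x_{\mu(k)}f_{\mu_k}$ rather than $x_nf_{\mu_k}$, so the third formula would also come out wrong.

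\textbf{The fix.} With the correct $S_j$ everything works:
\begin{itemize}
\item $(x_n-x_{\mu(j)})S_j=x_n$, so $\sum_i f_{\mu_i}R_i=h_\mu$ exactly. This settles the coloop and loop cases.
\item In the generic case your correction term becomes $x_nf_{\mu_k}+x_n(x_n-x_{\mu(k)})\sum_{j<k}g_{\mu,j}$.
\item Substituting $f_{\mu_k}=f_{\mu_n}+\sum_{j\ge k}(x_n-x_{\mu(j)})g_{\mu,j}$ into $(1-x_n)h_\mu$ plus this term gives precisely the stated third case.
\end{itemize}
Your split $c_i=(1-x_n)+(x_n-x_{\mu(k)})[i\le k]$ is then just a bookkeeping variant of the paper's expansion around $f_{\mu_k}$ in both directions.

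\textbf{Part (2).} Your pole-cancellation idea differs from the paper, which proves a divisibility lemma for the $g$'s and a lemma on how $k_\M$ changes under $\tau_i$, then runs a case analysis. Your idea can replace all of that, but it must be stated for the sum over two blocks, not one. Let $a=\mu(j)$, $b=\mu(j+1)$, $\mu'=\mu\circ\tau_j$, and $\Pi_\mu=\prod_{i\notin B_{\M\setminus\{n\}}(\mu)}(1-x_i)$.
\begin{itemize}
\item Pair $\mu_i$ with $\mu'_i$ for $i\neq j+1$. Gluing of $f$, together with the fact that greedy bases of $\M$ for adjacent permutations agree up to exchanging $a$ and $b$, shows each pair has no pole on $x_a=x_b$.
\item The permutations $\mu_{j+1},\mu'_{j+1}$ have no such pole to begin with.
\item Write the same total as $f_\mu Q(C_\mu)\Pi_\mu+f_{\mu'}Q(C_{\mu'})\Pi_{\mu'}$ over the denominator $x_a-x_b$. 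The cofactors agree and are nonzero on the wall, using the same greedy-basis fact for $\M\setminus\{n\}$.
\item Regularity then forces $(f_\mu-f_{\mu'})|_{x_a=x_b}=0$.
\end{itemize}

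Your fallback direct check is not yet a proof. In particular, $h_\mu$ does not glue ``clearly.'' The individual $g_{\mu,i}$ need not glue across the wall; only $g_{\mu,i}+g_{\mu,i+1}$ does, and that is exactly what the paper's divisibility lemma supplies.
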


Before proving Proposition~\ref{prop:indstep}, we present two consequences that were mentioned above. The first consequence is that Prop~\ref{prop:indstep} provides a novel proof of Brion's Formula \eqref{eq:Brion} in the setting of generalized permutohedra, and its proof helps us familiarize ourselves with the polynomials $g_{\mu,i}$ and $h_\mu$ appearing in Proposition~\ref{prop:indstep}.

\begin{corollary}
If $P\subset\R^E$ is a generalized permutohedon, then
\[
\sum_{\sigma\in S_E}x^{v_P(\sigma)} Q(C_\sigma) \, = \, q(P) \, .
\]
\end{corollary}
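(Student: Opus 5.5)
We will prove this by induction on $n=|E|$, applying Proposition~\ref{prop:indstep} to the Boolean matroid $\M=2^{E}$ on $E=[n]$. In that case every element is a coloop, so every greedy basis is all of $[n]$, and the products $\prod_{i\notin B_\M(\sigma)}(1-x_i)$ are empty. Take $f=f_P=\{x^{v_P(\sigma)}\}$. Then Proposition~\ref{prop:indstep}, in the coloop case, gives for each $\mu\in S_{n-1}$
\[
\sum_{\sigma\in S_n(\mu)}x^{v_P(\sigma)}Q(C_\sigma) \, = \, h_\mu\, Q(C_\mu)\, ,
\]
where $\{h_\mu\}\in\PLP(\Sigma_{n-1})\otimes\Z[x_n^{\pm1}]$. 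Summing over $\mu$, the left-hand side of the corollary becomes $\sum_{\mu\in S_{n-1}}h_\mu Q(C_\mu)$. The plan is to identify $h_\mu$ as a sum, over lattice slices $x_n=c$, of vertex monomials of the slice polytopes, and then to apply the induction hypothesis slice by slice. The base case $n=1$ is trivial, since $P$ is a single point and $Q(C_\sigma)=1$.

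\textbf{Step 1: compute $h_\mu$.} Write $v_P(\mu_j)=:w_j$. Along the path $\mu_1,\dots,\mu_n$, consecutive vertices satisfy $w_j-w_{j+1}=t_j(e_n-e_{\mu(j)})$ for some integer $t_j\ge 0$. This holds because the common wall of the cones $\widetilde C_{\mu_j}$ and $\widetilde C_{\mu_{j+1}}$ is $x_n=x_{\mu(j)}$, and the corresponding edge of $P$ (possibly degenerate) is parallel to $e_n-e_{\mu(j)}$. Hence
\[
x_n\,g_{\mu,j} \, = \, x^{w_{j+1}}\sum_{s=1}^{t_j}x_n^{s}x_{\mu(j)}^{t_j-s}\, ,
\]
a geometric sum of monomials for lattice points on the edge $[w_{j+1},w_j]$, with the endpoint $w_{j+1}$ excluded and $w_j$ included. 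Adding $f_{\mu_n}=x^{w_n}$ shows that $h_\mu=\sum_m x^m$, where $m$ runs over the lattice points of the edge path from $w_n$ up to $w_1$, each counted once. Along this path the $n$-th coordinate increases strictly on each nondegenerate edge. So for each integer $c$ with $\min_P x_n\le c\le\max_P x_n$ there is exactly one such point, and it is precisely the vertex $v_{P^c}(\mu)$ of the slice $P^c:=P\cap\{x_n=c\}$, maximizing a generic functional ordered by $\mu$. This identification is the main obstacle. We would prove it by checking that the path point at height $c$ maximizes $b\cdot x$ over $P^c$ for $b_{\mu(1)}>\dots>b_{\mu(n-1)}$, with $b_n$ varied so that this point is optimal within the slice. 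Equivalently, we would use the fact that the slice of a generalized permutohedron is again a generalized permutohedron, with vertices obtained by moving along $e_n-e_{\mu(j)}$ edges.

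\textbf{Step 2: conclude.} Granting Step~1, we get
\[
\sum_\mu h_\mu Q(C_\mu)=\sum_{c}x_n^c\sum_{\mu\in S_{n-1}}x^{v_{\overline{P^c}}(\mu)}Q(C_\mu)\, ,
\]
where $\overline{P^c}\subset\R^{n-1}$ is the projection of the slice $P^c$, which is a lattice generalized permutohedron in $\R^{n-1}$. By the induction hypothesis, each inner sum equals $q(\overline{P^c})$. Summing $x_n^c\,q(\overline{P^c})$ over $c$ recovers $q(P)$, since every lattice point of $P$ lies in exactly one integral slice.

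One caveat is that slice vertices could fail to be lattice points. This is excluded because the path points lie in $\Z^n$: they are obtained from the lattice vertex $w_{j+1}$ by integer steps along $e_n-e_{\mu(j)}$.
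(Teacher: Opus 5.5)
Your proposal is correct and follows essentially the same route as the paper's proof. Both use the coloop case of Proposition~\ref{prop:indstep} for the Boolean matroid, read $x_ng_{\mu,j}$ as the lattice points of a half-open edge so that $h_\mu$ enumerates the lattice points of the $e_n$-monotone path $w_n\to\dots\to w_1$, and then induct slice by slice; your sketch that the point at height $c$ is $v_{P^c}(\mu)$ (take $b_n=b_{\mu(j)}$, so the edge $[w_{j+1},w_j]$ is the maximizing face) fills in a step the paper only asserts, but the displayed formula should read $x_ng_{\mu,j}=x^{w_{j+1}}\sum_{s=1}^{t_j}(x_n/x_{\mu(j)})^{s}$ rather than $x^{w_{j+1}}\sum_{s=1}^{t_j}x_n^{s}x_{\mu(j)}^{t_j-s}$, though your verbal description of it is right.
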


\begin{proof}
Without loss of generality, suppose $E=[n]$. The formula in the left-hand side of the corollary is $Q_\M(f)$ where $\M$ is the Boolean matroid on $[n]$ and $f=f_P$. Since every element of the Boolean matroid is a coloop and since any deletion of a Boolean matroid is a Boolean matroid, it follows that we can study the left-hand side of the corollary recursively using the coloop case of Proposition~\ref{prop:indstep}:
\[
\sum_{\sigma\in S_n}x^{v_P(\sigma)} Q(C_\sigma) \, = \sum_{\mu\in S_{n-1}}h_\mu  \, Q(C_\mu) \, .
\]
It remains to understand $h_\mu$.

Since $P$ is a generalized permutohedron and $\mu_j=\mu_{j+1}\circ\tau_j$, it follows that $v_P(\mu_j)$ and $v_P(\mu_{j+1})$ are connected by an edge normal to the hyperplane $x_{\mu(j)}=x_n$, and since the $n$th coordinate of $v_P(\mu_j)$ is at least as big as the $n$th coordinate of $v_P(\mu_{j+1})$, it follows that
\[
v_P(\mu_{j+1})-v_P(\mu_j) \, = \, m(e_{\mu(j)}-e_n)\;\;\;\text{ for some }\;\;\;m\geq 0.
\]
Thus, we can compute $g_{\mu,j}$ explicitly:
\[
g_{\mu,j} \, = \, \frac{x^{v_P(\mu_j)}-x^{v_P(\mu_{j+1})}}{x_n-x_{\mu(j)}} \, = \,
\frac{x^{v_P(\mu_j)}}{x_n}\left(1+\frac{x_{\mu(j)}}{x_n}+\dots+\left(\frac{x_{\mu(j)}}{x_n}\right)^{m-1}\right) .
\]
In particular, notice that $x_ng_{\mu,j}$ is the Laurent polynomial of lattice points in the half-open edge
$\big[v_P(\mu_j),v_P(\mu_{j+1})\big)$, depicted in Figure~\ref{fig:latticepoints}.

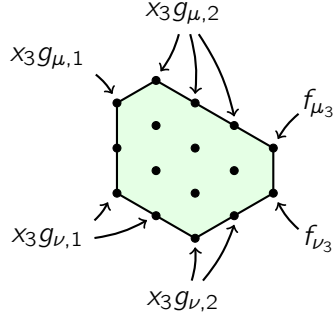
\begin{figure}
\begin{tikzpicture}[scale=1.2]
\draw[thick,fill=green!20, fill opacity=.5] (1,0) -- (1,1/2) -- (1-3*1.732/4,5/4) -- (1-1.732,1) -- (1-1.732,0) -- (1-1.732/2,-1/2) -- (1,0);

\node (m1) at (-1.5,1.5) {$x_3g_{\mu,1}$};
\node (m2) at (0,2) {$x_3g_{\mu,2}$};
\node (m3) at (1.5,1) {$f_{\mu_3}$};

\draw[thick,->, bend left=10] (m1) to (1-1.732-.05,1+.1);

\draw[thick,->, bend left=10] (m2) to (1-3*1.732/4+.05,5/4+.1);
\draw[thick,->, bend left=10] (m2) to (1-1.732/2,1+.1);
\draw[thick,->, bend left=10] (m2) to (1-1.732/4,3/4+.1);

\draw[thick,->, bend right=10] (1.5-.22,1-.12) to (1+.05,1/2+.1);

\node (n1) at (-1.5,-.5) {$x_3g_{\nu,1}$};
\node (n2) at (0,-1.2) {$x_3g_{\nu,2}$};
\node (n3) at (1.5,-.5) {$f_{\nu_3}$};

\draw[thick,->, bend right=10] (n1) to (1-1.732-.1,-.1);
\draw[thick,->, bend right=10] (n1) to (1-3*1.732/4-.1,-1/4-.05);

\draw[thick,->, bend right=10] (n2) to (1-1.732/2,-1/2-.1);
\draw[thick,->, bend right=10] (n2) to (1-1.732/4,-1/4-.1);

\draw[thick,->, bend left=10] (1.5-.22,-.5+.1) to (1.05,-.1);

\node at (1,0) [draw, shape=circle, fill=black, minimum size=3pt, inner sep=0pt] {};
\node at (1,1/2) [draw, shape=circle, fill=black, minimum size=3pt, inner sep=0pt] {};
\node at (1-3*1.732/4,5/4) [draw, shape=circle, fill=black, minimum size=3pt, inner sep=0pt] {};
\node at (1-1.732,1) [draw, shape=circle, fill=black, minimum size=3pt, inner sep=0pt] {};
\node at (1-1.732,1/2) [draw, shape=circle, fill=black, minimum size=3pt, inner sep=0pt] {};
\node at (1-1.732,0) [draw, shape=circle, fill=black, minimum size=3pt, inner sep=0pt] {};
\node at (1-3*1.732/4,-1/4) [draw, shape=circle, fill=black, minimum size=3pt, inner sep=0pt] {};
\node at (1-1.732/2,-1/2) [draw, shape=circle, fill=black, minimum size=3pt, inner sep=0pt] {};
\node at (1-1.732/4,-1/4) [draw, shape=circle, fill=black, minimum size=3pt, inner sep=0pt] {};

\node at (1-1.732/2,1/2) [draw, shape=circle, fill=black, minimum size=3pt, inner sep=0pt] {};

\node at (1-1.732/4,1/4) [draw, shape=circle, fill=black, minimum size=3pt, inner sep=0pt] {};
\node at (1-1.732/4,3/4) [draw, shape=circle, fill=black, minimum size=3pt, inner sep=0pt] {};
\node at (1-1.732/2,1) [draw, shape=circle, fill=black, minimum size=3pt, inner sep=0pt] {};
\node at (1-3*1.732/4,3/4) [draw, shape=circle, fill=black, minimum size=3pt, inner sep=0pt] {};
\node at (1-3*1.732/4,1/4) [draw, shape=circle, fill=black, minimum size=3pt, inner sep=0pt] {};
\node at (1-1.732/2,0) [draw, shape=circle, fill=black, minimum size=3pt, inner sep=0pt] {};
\end{tikzpicture}
\caption{Identifying lattice points contributing to $h_\mu$.}\label{fig:latticepoints}
\end{figure}

Therefore, we see that $h_\mu=x_ng_{\mu,1}+\dots+x_ng_{\mu,n-1}+f_{\mu_n}$ is the Laurent polynomial of lattice points passed when traversing along edges in the positive $e_n$ direction from $v_P(\mu_1)$ to $v_P(\mu_n)$. It follows that, as elements of $\PLP(\Sigma_{n-1})\otimes_\Z\Z[x_n^{\pm 1}]$, 
\[
\{h_\mu:\mu\in S_{n-1}\} \, = \, \sum_{k\in\Z} x_n^kf_{P_k} \, ,
\]
where $P_k\subset\R^{n-1}$ is the projection of the slice $P\cap\{x_n=k\}$ along the quotient map $\R^n\rightarrow\R^{n-1}$. Thus, by induction, we conclude that
\[
\sum_{\sigma\in S_n}x^{v_P(\sigma)} Q(C_\sigma) \, = \, \sum_{k\in\Z}x_n^kq(P_k) \, = \, q(P) \, .\qedhere
\]
\end{proof}

The previous result described the Laurent polynomial $Q_\M(P)$ when $\M$ was the Boolean matroid and $P$ was any generalized permutohedron. The next result, on the other hand, allows $\M$ to be any matroid but restricts $P$ to the trivial generalized permutohedron $P=\{0\}$.

\begin{corollary}\label{corollary:basecase}
For any matroid $\M$ on $E$,
\[
Q_\M(1) \, =\prod_{i\in E\atop i\text{ is a loop of }\M}(1-x_i) \, .
\]
\end{corollary}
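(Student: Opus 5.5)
We argue by induction on $n=|E|$, applying Proposition~\ref{prop:indstep} with $f=1$, the constant piecewise Laurent polynomial ($f_\sigma=1$ for all $\sigma$). Without loss of generality $E=[n]$. For $n=1$ the sum has a single term with $Q(C_\sigma)=1$, so $Q_\M(1)=\prod_{i\notin B}(1-x_1)$. This equals $1$ if $1$ is a coloop and $1-x_1$ if $1$ is a loop, which matches the claim. (One could also start from $n=0$, where the empty product gives $1$.)

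For the inductive step, first compute the auxiliary polynomials. Since $f_\sigma=1$ for every $\sigma$, each numerator $f_{\mu_j}-f_{\mu_{j+1}}$ vanishes, so $g_{\mu,j}=0$ for all $\mu$ and $j$. Hence $h_\mu=f_{\mu_n}=1$. Proposition~\ref{prop:indstep}(1) then gives three cases:
\begin{itemize}
\item if $n$ is a coloop, $f_\mu=1$;
\item if $n$ is a loop, $f_\mu=1-x_n$;
\item otherwise, $f_\mu=h_\mu-x_n\cdot 0=1$, and the index $k_\M(\mu)$ plays no role because every $g$ term vanishes.
\end{itemize}
In all cases $f_\mu=c$ is independent of $\mu$, with $c=1-x_n$ if $n$ is a loop and $c=1$ otherwise. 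Summing the identity of Proposition~\ref{prop:indstep} over all $\mu\in S_{n-1}$, and using that the sets $S_n(\mu)$ partition $S_n$, gives
\[
Q_\M(1) \, = \, c\sum_{\mu\in S_{n-1}} Q(C_\mu)\prod_{i\notin B_{\M\setminus\{n\}}(\mu)}(1-x_i) \, = \, c\, Q_{\M\setminus\{n\}}(1),
\]
where the latter is computed on the ground set $[n-1]$. Since $c$ depends only on $x_n$, factoring it out of the sum is legitimate.

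By the induction hypothesis, $Q_{\M\setminus\{n\}}(1)=\prod(1-x_i)$, the product running over the loops of $\M\setminus\{n\}$. It remains to check that for $i\in[n-1]$, $i$ is a loop of $\M\setminus\{n\}$ if and only if $i$ is a loop of $\M$. This holds because the independent sets of the deletion are exactly the independent sets of $\M$ contained in $[n-1]$, so $\{i\}$ is independent in one matroid exactly when it is in the other. Multiplying by $c$ supplies the factor $1-x_n$ precisely when $n$ is a loop of $\M$, which completes the induction.

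The only real obstacle is bookkeeping. We must confirm that Proposition~\ref{prop:indstep} holds verbatim when $n$ is a loop, where $B_{\M\setminus\{n\}}(\mu)$ is the greedy basis of the deletion. We must also correctly identify the loops of the deletion. Both points are immediate from the definitions of restriction and greedy bases.
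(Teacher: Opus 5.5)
Your proof is correct and follows essentially the same route as the paper. Both set $f=1$ so that every $g_{\mu,j}$ vanishes, read off $f_\mu=1$ or $1-x_n$ from Proposition~\ref{prop:indstep} according to whether $n$ is a loop, and induct using that the loops of $\M\setminus\{n\}$ are exactly the loops of $\M$ in $[n-1]$. You also spell out the base case and the summation over the partition $\{S_n(\mu)\}$ of $S_n$, both of which the paper leaves implicit.
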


\begin{proof}
Without loss of generality, suppose $E=[n]$. Since $f=1$, it follows that $g_{\mu,j}=0$ for all $\mu\in S_{n-1}$ and $j=1,\dots,n-1$, and so $h_\mu=f_{\mu_n}=1$. It then follows from Proposition~\ref{prop:indstep} that 
\[
h_\mu=\begin{cases}
1 & \text{ if } n\text{ is not a loop,}\\
(1-x_n) & \text{ if } n\text{ is a loop.}
\end{cases}
\]
Since $i$ is a loop of $\M$ if and only if $i$ is a loop of $\M\setminus\{n\}$, the result follows by induction on $n$.
\end{proof}

The rest of this section is devoted to a proof of Proposition~\ref{prop:indstep}. Our first lemma gives explicit formulas for partial sums of the rational generating functions associated to the cones $C_{\mu_1},\dots,C_{\mu_n}$.

\begin{lemma}\label{lem:cones}
For any $\mu\in S_{n-1}$ and $1\leq i\leq n-1$, 
\[
Q(C_{\mu_1})+\dots+Q(C_{\mu_i}) \, = \, \frac{-\frac{x_n}{x_{\mu(i)}}}{\big(1-\frac{x_{\mu(2)}}{x_{\mu(1)}}\big)\cdots\big(1-\frac{x_{\mu(n-1)}}{x_{\mu(n-2)}}\big)\big(1-\frac{x_{n}}{x_{\mu(i)}}\big)}
\]
and
\[
Q(C_{\mu_{i+1}})+\dots+Q(C_{\mu_n}) \, = \,
\frac{1}{\big(1-\frac{x_{\mu(2)}}{x_{\mu(1)}}\big)\cdots\big(1-\frac{x_{\mu(n-1)}}{x_{\mu(n-2)}}\big)\big(1-\frac{x_{n}}{x_{\mu(i)}}\big)}
\, .
\]
Adding, we obtain
\[
Q(C_{\mu_1})+\dots+Q(C_{\mu_n}) \, = \,
\frac{1}{\big(1-\frac{x_{\mu(2)}}{x_{\mu(1)}}\big)\cdots\big(1-\frac{x_{\mu(n-1)}}{x_{\mu(n-2)}}\big)} \, = \, Q(C_\mu)
\, .
\]
\end{lemma}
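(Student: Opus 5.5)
Write $D \coloneqq \big(1-\frac{x_{\mu(2)}}{x_{\mu(1)}}\big)\cdots\big(1-\frac{x_{\mu(n-1)}}{x_{\mu(n-2)}}\big)$, so that $Q(C_\mu)=1/D$, and abbreviate $y_j\coloneqq x_{\mu(j)}$. The plan is to write each $Q(C_{\mu_j})$ explicitly, prove the second identity by downward induction on $i$, and then derive the first identity from it by subtraction using the total sum.

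For $\mu_j$ with $2\le j\le n-1$, the word $\mu_j$ is $\mu(1)\cdots\mu(j-1)\,n\,\mu(j)\cdots\mu(n-1)$. Its denominator is therefore $D$ with the factor $(1-y_j/y_{j-1})$ replaced by $(1-x_n/y_{j-1})(1-y_j/x_n)$. The endpoints are similar: $Q(C_{\mu_1})=1/\big(D(1-y_1/x_n)\big)$ and $Q(C_{\mu_n})=1/\big(D(1-x_n/y_{n-1})\big)$.

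For the second identity, the base case $i=n-1$ is exactly the formula for $Q(C_{\mu_n})$. For the inductive step, assume the identity for $i$ and add $Q(C_{\mu_i})$. After multiplying through by $D$, one needs the identity
\[
\frac{1-y_i/y_{i-1}}{(1-x_n/y_{i-1})(1-y_i/x_n)}+\frac{1}{1-x_n/y_i}=\frac{1}{1-x_n/y_{i-1}},
\]
which is a two-line rational function check. Clear denominators using $1-y_i/x_n=-(y_i/x_n)(1-x_n/y_i)$, and the left-hand side reduces to $\frac{1-y_i/y_{i-1}-y_i/x_n\cdot(\dots)}{\dots}$. Concretely, it is the partial-fraction identity
\[
\frac{1}{(1-a)(1-b)} = \frac{1}{1-a}+\frac{b}{(1-b)\dots}
\]
in the variables $a=x_n/y_{i-1}$ and $b=y_i/x_n$, using $ab=y_i/y_{i-1}$.

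Next I take $i=1$ in the second identity and add $Q(C_{\mu_1})$. The check
\[
\frac{1}{1-y_1/x_n}+\frac{1}{1-x_n/y_1}=1
\]
gives the total $Q(C_{\mu_1})+\dots+Q(C_{\mu_n})=1/D=Q(C_\mu)$.

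The first identity then follows by subtracting the second from this total:
\[
\frac1D\left(1-\frac{1}{1-x_n/y_i}\right)=\frac{-x_n/y_i}{D(1-x_n/y_i)}.
\]

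The only real obstacle is the bookkeeping in the inductive step, namely getting the identity $\frac{1-ab}{(1-a)(1-b)}=\frac{1}{1-a}+\frac{1}{1-1/b}$ right. This rearranges to $1-ab=(1-b)-b(1-a)\cdot\frac{1}{b}\cdot\dots$, and I would verify it directly by cross-multiplication. Everything else is substitution.
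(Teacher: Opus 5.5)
Your proposal is correct and is essentially the paper's proof. Both use explicit formulas for each $Q(C_{\mu_j})$ and descending induction on the tail sums; you obtain the first identity by subtracting from the total instead of by the paper's separate forward induction, which is a harmless economy. One slip to fix: the identity restated in your last paragraph should read $\frac{1-ab}{(1-a)(1-b)}=\frac{1}{1-a}+\frac{b}{1-b}=\frac{1}{1-a}-\frac{1}{1-1/b}$, i.e.\ $1-ab=(1-b)+b(1-a)$, because $\frac{1}{1-1/b}=-\frac{b}{1-b}$; with the plus sign you wrote it is false, although the display in your inductive step (the one you actually use) is correct.
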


\begin{proof}
Using the explicit formulas for $Q(C_{\mu_i})$, the first partial sum can be verified by induction on $i$, while the second can be verified by descending induction on $i$.
\end{proof}

Our next lemma describes the relationship between the greedy bases $B_\M(\mu_i)$ and $B_{\M\setminus\{n\}}(\mu)$.

\begin{lemma}\label{lem:bases}
If $n$ is a loop of $\M$, then $B_\M(\mu_i)=B_{\M\setminus\{n\}}(\mu)$ for every $i\in[n]$. If $n$ is a coloop of $\M$, then $B_\M(\mu_i)=B_{\M\setminus\{n\}}(\mu)\cup \{n\}$ for every $i\in[n]$. If $n$ is neither a loop nor a coloop of $M$, then
\[
B_\M(\mu_i) \, = \, \begin{cases}
(B_{\M\setminus\{n\}}(\mu)\setminus \{\mu(k)\})\cup \{n\} & \text{ if } i\leq k,\\
B_{\M\setminus\{n\}}(\mu) & \text{ if } i>k,
\end{cases}
\]
where $k=k_\M(\mu)$.
\end{lemma}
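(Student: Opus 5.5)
The plan is to compare the greedy algorithm run on $\mu_i$ with the greedy algorithm run on $\mu$ for $\M\setminus\{n\}$, position by position. Write $\mu_i = (\mu(1),\dots,\mu(i-1),n,\mu(i),\dots,\mu(n-1))$. For any prefix not containing $n$, the ranks in $\M$ and in $\M\setminus\{n\}$ coincide, since rank is computed on subsets and restriction preserves the rank function. So the elements $\mu(1),\dots,\mu(i-1)$ enter $B_\M(\mu_i)$ exactly when they enter $B_{\M\setminus\{n\}}(\mu)$.

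The element $n$ enters $B_\M(\mu_i)$ if and only if $\rk\{\mu(1),\dots,\mu(i-1),n\}>\rk\{\mu(1),\dots,\mu(i-1)\}$.

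For a later element $\mu(j)$ with $j\geq i$, the question is whether
\[
\rk(A_j\cup\{n\}\cup\{\mu(j)\})>\rk(A_j\cup\{n\}),
\]
where $A_j=\{\mu(1),\dots,\mu(j-1)\}$, compared with whether $\rk(A_j\cup\{\mu(j)\})>\rk(A_j)$.

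\emph{Loop case.} Adding $n$ never changes rank, so every comparison coincides with the one for $\M\setminus\{n\}$, and $n$ is never added. Hence $B_\M(\mu_i)=B_{\M\setminus\{n\}}(\mu)$ for every $i$.

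\emph{Coloop case.} The identity $\rk(S\cup\{n\})=\rk(S)+1$ holds for every $S$ not containing $n$. Therefore $n$ is always added, and each later increment test is unchanged. This gives $B_\M(\mu_i)=B_{\M\setminus\{n\}}(\mu)\cup\{n\}$ for every $i$.

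\emph{Main case.} Here $n$ is neither a loop nor a coloop. Set $r(j)=\rk(A_j\cup\{n\})-\rk(A_j)\in\{0,1\}$. This function is nonincreasing in $j$ by submodularity. It equals $1$ at $j=1$, since $n$ is not a loop. It equals $0$ at $j=n$, since $n$ is not a coloop, so the full set $[n-1]$ has full rank. Consequently $r(j)=1$ exactly for $j\leq k$, with $k=k_\M(\mu)$, and $n\in B_\M(\mu_i)$ if and only if $i\leq k$.

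If $i>k$, then $n$ is not added, and after position $i$ we have $\rk(A_j\cup\{n\})=\rk(A_j)$ for all $j\geq i$. The increments therefore coincide with those in $\M\setminus\{n\}$, giving $B_\M(\mu_i)=B_{\M\setminus\{n\}}(\mu)$.

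If $i\leq k$, then $n$ is added. For $j\geq i$, the increment of $\mu(j)$ in $\M$ equals
\[
\bigl[\rk(A_j)+r(j+1)+\text{(increment of }\mu(j)\text{ in }\M\setminus\{n\})\bigr]-\bigl[\rk(A_j)+r(j)\bigr].
\]
Rearranged, the $\M$-increment equals the $\M\setminus\{n\}$-increment minus $(r(j)-r(j+1))$. The difference $r(j)-r(j+1)$ is nonzero only at $j=k$. So every $\mu(j)$ with $j\neq k$ behaves identically, while $\mu(k)$ loses its increment. At $j=k$ we have $r(k)=1$ and $r(k+1)=0$, so $\mu(k)$ must have had increment $1$ in $\M\setminus\{n\}$ and has increment $0$ in $\M$. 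This is the claimed swap:
\[
B_\M(\mu_i)=\bigl(B_{\M\setminus\{n\}}(\mu)\setminus\{\mu(k)\}\bigr)\cup\{n\}.
\]

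The main obstacle is this last bookkeeping step. It requires the rank identity relating $\rk(A_{j+1}\cup\{n\})$ to $\rk(A_j\cup\{n\})$, and the monotonicity of $r$, which pins down that only $\mu(k)$ is affected.
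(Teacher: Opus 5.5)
Your proof is correct, but it takes a different route from the paper's.

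\textbf{The paper's route.} The paper never compares $\mu_i$ with $\mu$ directly. Instead it notes that $\mu_i$ and $\mu_{i+1}$ differ by an adjacent transposition of $n$ and $\mu(i)$. Hence the greedy bases $B_\M(\mu_i)$ and $B_\M(\mu_{i+1})$ can differ only in those two elements. Since all bases have the same size, they are either equal or related by the swap $\mu(i)\leftrightarrow n$. The paper then anchors this chain at $B_\M(\mu_n)=B_{\M\setminus\{n\}}(\mu)$. By maximality of $k$, $n\notin B_\M(\mu_i)$ for $i>k$. Because $n$ is a coloop of $\M|\{\mu(1),\dots,\mu(k-1),n\}$, $n\in B_\M(\mu_i)$ for $i\le k$. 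So the only nontrivial step in the chain is between $\mu_k$ and $\mu_{k+1}$, and the element exchanged there is $\mu(k)$.

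\textbf{Your route.} You compute each $B_\M(\mu_i)$ independently from the rank function. The tools are the nonincreasing $\{0,1\}$-valued function $r(j)=\rk(A_j\cup\{n\})-\rk(A_j)$ and the identity
\[
\bigl(\M\text{-increment of }\mu(j)\bigr)=\bigl(\M\setminus\{n\}\text{-increment of }\mu(j)\bigr)-\bigl(r(j)-r(j+1)\bigr).
\]

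\textbf{What each buys.} Your argument makes the submodularity input explicit. The paper uses it only tacitly, when it passes from ``$n$ is a coloop of the restriction to $A_k\cup\{n\}$'' to ``$n\in B_\M(\mu_i)$ for all $i\le k$.'' Your argument also identifies the dropped element $\mu(k)$ without appealing to equicardinality of bases. The paper's argument is shorter and stays in the language of adjacent transpositions of the braid fan used throughout Section~\ref{sec:polynomiality}.
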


\begin{proof}
By definition of the greedy basis $B_\M(\mu_i)$, the only elements where $B_\M(\mu_i)$ and $B_\M(\mu_{i+1})$ can differ are $\mu(i)$ and $n$, and since all bases are the same size, it follows that either $B_\M(\mu_i)=B_\M(\mu_{i+1})$ or $B_\M(\mu_i)=(B_\M(\mu_{i+1})\setminus \{\mu(i)\})\cup \{n\}$. If $n$ is a loop, then $n$ cannot be in any basis, so all of the bases $B_\M(\mu_i)$ are the same subset of $[n-1]$, and by definition of the deletion matroid, this subset is $B_{\M\setminus\{n\}}(\mu)$. Similarly, if $n$ is a coloop, then every basis contains $n$, so all of the bases $B_\M(\mu_i)$ are the same subset of $[n]$, and by definition of the deletion matroid, this common basis is equal to $B_{\M\setminus\{n\}}(\mu)\cup \{n\}$. 

Now suppose that $n$ is neither a loop nor a coloop of $\M$. By definition of $k=k_\M(\mu)$, we have that $n\in B_M(\mu_k)$ and $n\notin B_\M(\mu_{k+1})$, so
\[
B_\M(\mu_k) \, = \, (B_\M(\mu_{k+1})\setminus \{\mu(k)\})\cup \{n\} \, .
\]
Furthermore, none of the bases $B_\M(\mu_{k+1}),\dots,B_\M(\mu_n)$ contain $n$, so it follows that these bases are all equal. Lastly, given that $\rk(\mu(1),\dots,\mu(k-1),n)>\rk(\mu(1),\dots,\mu(k-1))$, we see that $n$ is a coloop of the restriction matroid $\M|{\{\mu(1),\dots,\mu(k-1),n\}}$, so $n\in B_\M(\mu_i)$ for any $i=1,\dots,k$, implying that $B_\M(\mu_1)=\dots=B_\M(\mu_k)$. The lemma is then proved upon observing that, by definition of the deletion matroid, $B_{\M\setminus\{n\}}(\mu)=B_\M(\mu_n)$.
\end{proof} 

We are now prepared to prove the first part of Proposition~\ref{prop:indstep}.

\begin{proof}[Proof of Proposition~\ref{prop:indstep} (1)]
First suppose that $n$ is neither a loop nor a coloop of $\M$. Fix $\mu$ and let $k=k_\M(\mu)$. Then
\[
\sum_{j=1}^nf_{\mu_j} Q(C_{\mu_j})\prod_{i\notin B_\M(\mu_j)}(1-x_i) \, =\prod_{i\notin
B_\M(\mu_1)}(1-x_i)\sum_{j=1}^{k}f_{\mu_j} Q(C_{\mu_j})+\prod_{i\notin B_\M(\mu_n)}(1-x_i)\sum_{j=k+1}^nf_{\mu_j}
Q(C_{\mu_j}) \, .
\]
Iterating the identity
\[
f_{\mu_j} \, = \, (x_n-x_{\mu(j)})g_{\mu,j}+f_{\mu_{j+1}} \, ,
\]
we obtain, for $j=1,\dots,k-1$, 
\begin{equation}\label{eq:iteratedidentity}
f_{\mu_j} \, = \, (x_n-x_{\mu(j)})g_{\mu,j}+\dots+(x_n-x_{\mu(k-1)})g_{\mu,k-1}+f_{\mu_k}\, .
\end{equation}
Thus,
\begin{align}
\nonumber \sum_{j=1}^{k}f_{\mu_j} Q(C_{\mu_j}) \, & = \, f_{\mu_k}\sum_{j=1}^{k} Q(C_{\mu_j})+\sum_{i=1}^{k-1}(x_n-x_{\mu(i)})g_{\mu,i}\sum_{\ell=1}^iQ(C_{\mu_\ell})\\
&= \,
\frac{-\frac{x_{n}}{x_{\mu(k)}}f_{\mu_k}}{\big(1-\frac{x_{\mu(2)}}{x_{\mu(1)}}\big)\cdots\big(1-\frac{x_{\mu(n-1)}}{x_{\mu(n-2)}}\big)\big(1-\frac{x_{n}}{x_{\mu(k)}}\big)}+\frac{x_n
(g_{\mu,1}+\dots+g_{\mu,k-1})}{\big(1-\frac{x_{\mu(2)}}{x_{\mu(1)}}\big)\cdots\big(1-\frac{x_{\mu(n-1)}}{x_{\mu(n-2)}}\big)}
\, , \label{eq:big1}
\end{align}
where the second equality uses Lemma~\ref{lem:cones} and requires some simplification of rational functions. Similarly, for $j=k+1,\dots,n$,
\[
f_{\mu_j} \, = \, f_{\mu_{k}}-(x_n-x_{\mu(k)})g_{\mu,k}-\dots-(x_n-x_{\mu(j-1)})g_{\mu,j-1} \, ,
\]
from which we obtain
\begin{align}
\nonumber \sum_{j=k+1}^nf_{\mu_j} Q(C_{\mu_j}) \, &= \, f_{\mu_k}\sum_{j=k+1}^nQ(C_{\mu_j})-\sum_{i=k}^{n-1}(x_n-x_{\mu(i)})g_{\mu,i}\sum_{\ell=i+1}^nQ(C_{\mu_j})\\
&= \,
\frac{f_{\mu_k}}{\big(1-\frac{x_{\mu(2)}}{x_{\mu(1)}}\big)\cdots\big(1-\frac{x_{\mu(n-1)}}{x_{\mu(n-2)}}\big)\big(1-\frac{x_{n}}{x_{\mu(k)}}\big)}+\frac{x_{\mu(k)}g_{\mu,k}+\dots+x_{\mu(n-1)}g_{\mu,n-1}}{\big(1-\frac{x_{\mu(2)}}{x_{\mu(1)}}\big)\cdots\big(1-\frac{x_{\mu(n-1)}}{x_{\mu(n-2)}}\big)}
\, . \label{eq:big2}
\end{align}
Pulling everything together, we compute
\begin{align*}
\eqref{eq:big1}\cdot \prod_{i\notin B_\M(\mu_1)}(1-x_i)&+\eqref{eq:big2}\cdot \prod_{i\notin B_\M(\mu_n)}(1-x_i)\\
&= \, \big(\eqref{eq:big1}\cdot(1-x_{\mu(k)})+\eqref{eq:big2}\cdot(1-x_n)\big)\cdot\prod_{i\notin B_{\M\setminus\{n\}}(\mu)}(1-x_i)\\
&= \, f_\mu Q(C_\mu)\prod_{i\notin B_{\M\setminus\{n\}}(\mu)}(1-x_i) \, ,
\end{align*}
where
\begin{align*}
f_\mu \, &= \, f_{\mu_k}+(1-x_{\mu(k)})(x_n g_{\mu,1}+\dots+x_n g_{\mu,k-1})+(1-x_n)(x_{\mu(k)}g_{\mu,k}+\dots+x_{\mu(n-1)}g_{\mu,n-1})\\
&= \, h_\mu-x_n(x_{\mu(k)}g_{\mu,1}+\dots+x_{\mu(k)} g_{\mu,k}+x_{\mu(k+1)}g_{\mu,k+1}+\dots+x_{\mu(n-1)}g_{\mu,n-1})
\, ,
\end{align*}
with the last equality following from \eqref{eq:iteratedidentity}.This proves Proposition~\ref{prop:indstep} (1) in the case where $n$ is neither a loop nor a coloop. If $n$ is a loop, then we can adapt the above arguments to obtain
\[
\sum_{j=1}^nf_{\mu_j} Q(C_{\mu_j})\prod_{i\notin B_\M(\mu_j)}(1-x_i) \, = \, \sum_{j=1}^nf_{\mu_j} Q(C_{\mu_j})(1-x_n)\prod_{i\notin B_{\M\setminus\{n\}}(\mu)}(1-x_i)
\]
where
\[
\sum_{j=1}^nf_{\mu_j} Q(C_{\mu_j}) \, = \,
f_{\mu_k}\sum_{j=k+1}^nQ(C_{\mu_j})-\sum_{i=k}^{n-1}(x_n-x_{\mu(i)})g_{\mu,i}\sum_{\ell=i+1}^nQ(C_{\mu_j}) \, = \, h_\mu
Q(C_\mu) \, ,
\]
proving the loop case of Proposition~\ref{prop:indstep} (1). And if $n$ is a coloop, we similarly obtain
\[
\sum_{j=1}^nf_{\mu_j} Q(C_{\mu_j})\prod_{i\notin B_\M(\mu_j)}(1-x_i) \, = \, \sum_{j=1}^nf_{\mu_j}
Q(C_{\mu_j})\prod_{i\notin B_{\M\setminus\{n\}}(\mu)}(1-x_i) \, = \, h_\mu Q(C_\mu)\prod_{i\notin
B_{\M\setminus\{n\}}(\mu)}(1-x_i) \, ,
\]
yielding the coloop case of Proposition~\ref{prop:indstep} (1).
\end{proof}

To prove the second part of Proposition~\ref{prop:indstep}, we need to compare the polynomials $f_\mu$ and $f_{\mu\circ\tau_i}$. The next lemma will be useful.

\begin{lemma}\label{lem:divides}
Let $\mu\in S_{n-1}$ and $\mu'=\mu\circ\tau_i$ for some $i\in[n-2]$.
\begin{enumerate}
\item If $j\in[n-1]\setminus\{i, i+1\}$, then $x_{\mu(i+1)}-x_{\mu(i)}$ divides $g_{\mu,j}-g_{\mu',j}$.
\item $x_{\mu(i+1)}-x_{\mu(i)}$ divides $(g_{\mu,i}+g_{\mu,i+1})-(g_{\mu',i}+g_{\mu',i+1})$.
\end{enumerate}
\end{lemma}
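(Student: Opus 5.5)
The plan is to reduce both statements to the defining divisibility property of $f\in\PLP(\Sigma_n)$, after first working out how the permutations $\mu_j$ and $\mu'_j$ are related. Write $d\coloneqq x_{\mu(i+1)}-x_{\mu(i)}$. Note that $\mu'$ agrees with $\mu$ except that the entries $\mu(i),\mu(i+1)$ are swapped. Inserting $n$ in position $j$ gives the following.
\begin{itemize}
\item If $j\le i$, then $\mu(i),\mu(i+1)$ occupy the adjacent positions $i+1,i+2$ of $\mu_j$, so $\mu'_j=\mu_j\circ\tau_{i+1}$.
\item If $j\ge i+2$, they occupy the adjacent positions $i,i+1$, so $\mu'_j=\mu_j\circ\tau_i$.
\item For $j=i+1$ the two entries are separated by $n$, so $\mu'_{i+1}$ is \emph{not} adjacent to $\mu_{i+1}$.
\end{itemize}
By the definition of a piecewise Laurent polynomial, $d$ divides $f_{\mu_j}-f_{\mu'_j}$ for every $j\neq i+1$.

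For part (1), take $j\notin\{i,i+1\}$. Then neither $j$ nor $j+1$ equals $i+1$, and $\mu'(j)=\mu(j)$. Hence
\[
g_{\mu,j}-g_{\mu',j} \, = \, \frac{(f_{\mu_j}-f_{\mu'_j})-(f_{\mu_{j+1}}-f_{\mu'_{j+1}})}{x_n-x_{\mu(j)}} \, .
\]
The numerator is divisible by $d$. The left-hand side is a Laurent polynomial, and $x_n-x_{\mu(j)}$ is coprime to the irreducible element $d$ in the UFD $\Z[x_1^{\pm1},\dots,x_n^{\pm1}]$, because $\mu(j)\notin\{\mu(i),\mu(i+1)\}$. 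It follows that $d$ divides $g_{\mu,j}-g_{\mu',j}$.

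For part (2), the index $i+1$ is involved, so we cannot pair terms directly. Instead we pass to the quotient by $d$. Let $\pi$ be the ring homomorphism substituting $x_{\mu(i+1)}\mapsto x_{\mu(i)}\eqqcolon y$; its kernel is exactly $(d)$. We extend $\pi$ to rational functions whose denominators avoid $(d)$, and we must check that $\pi$ of the difference is $0$. Using $\mu'(i)=\mu(i+1)$ and $\mu'(i+1)=\mu(i)$, both denominators $x_n-x_{\mu(i)}$ and $x_n-x_{\mu(i+1)}$ map to $x_n-y\neq0$. Therefore
\[
\pi(g_{\mu,i}+g_{\mu,i+1}) \, = \, \frac{\pi(f_{\mu_i})-\pi(f_{\mu_{i+2}})}{x_n-y} \, ,\qquad \pi(g_{\mu',i}+g_{\mu',i+1}) \, = \, \frac{\pi(f_{\mu'_i})-\pi(f_{\mu'_{i+2}})}{x_n-y} \, .
\]
The problematic middle terms $f_{\mu_{i+1}}$ and $f_{\mu'_{i+1}}$ cancel by telescoping. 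By the adjacency observations above, $\pi(f_{\mu_i})=\pi(f_{\mu'_i})$ and $\pi(f_{\mu_{i+2}})=\pi(f_{\mu'_{i+2}})$. So the two sides agree after applying $\pi$, which means the difference lies in $\ker\pi=(d)$.

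The only point needing care is the legitimacy of applying $\pi$ to the quotients defining $g$. This is handled by writing each identity $g\cdot(x_n-x_{\mu(\cdot)})=f-f$ in the polynomial ring, applying $\pi$, and dividing by the nonzero element $x_n-y$ in the integral domain $\Z[x_\ell^{\pm1}:\ell\neq\mu(i+1)]$. I expect this telescoping-after-specialization step in part (2) to be the crux; the rest is routine.
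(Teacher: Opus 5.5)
Your proposal is correct and follows essentially the same route as the paper: part (1) comes from the adjacent-swap divisibility of both numerators $f_{\mu_j}-f_{\mu'_j}$ and $f_{\mu_{j+1}}-f_{\mu'_{j+1}}$. Part (2) comes from specializing $x_{\mu(i+1)}=x_{\mu(i)}$, where the $f_{\mu_{i+1}}$ and $f_{\mu'_{i+1}}$ terms cancel and the surviving numerators vanish because $\mu'_i=\mu_i\circ\tau_{i+1}$ and $\mu'_{i+2}=\mu_{i+2}\circ\tau_i$. Beyond the paper, your coprimality argument for $x_n-x_{\mu(j)}$ versus $x_{\mu(i+1)}-x_{\mu(i)}$ in part (1), and your clearing-denominators justification of the specialization in part (2), make explicit two points the paper leaves implicit.
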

\begin{proof}
Recall that $g_{\mu,j}=\frac{f_{\mu_j}-f_{\mu_{j+1}}}{x_n-x_{\mu(j)}}$. First, assume $j\notin\{i,i+1\}$. Then $\mu'(j)=\mu(j)$, so
\[
g_{\mu,j}-g_{\mu',j} \, = \,
\frac{f_{\mu_j}-f_{\mu_{j+1}}}{x_n-x_{\mu(j)}}-\frac{f_{\mu'_j}-f_{\mu'_{j+1}}}{x_n-x_{\mu'(j)}}=\frac{f_{\mu_j}-f_{\mu'_j}}{x_n-x_{\mu(j)}}+\frac{f_{\mu'_{j+1}}-f_{\mu_{j+1}}}{x_n-x_{\mu(j)}}
\, .
\]
Notice that both pairs of permutations $(\mu_j,\mu_j')$ and $(\mu_{j+1},\mu_{j+1}')$ differ by swapping the adjacent elements $\mu(i)$ and $\mu(i+1)$, and since $f$ forms a piecewise Laurent polynomial on $\Sigma_n$, both numerators in the previous expression are divisible by $x_{\mu(i+1)}-x_{\mu(i)}$, proving that $x_{\mu(i+1)}-x_{\mu(i)}$ divides $g_{\mu,j}-g_{\mu',j}$. 

For the second case, notice that $\mu'(i)=\mu(i+1)$ and $\mu'(i+1)=\mu_i(i)$, from which we compute
\begin{align*}
(g_{\mu,i}+g_{\mu,i+1})-(g_{\mu',i}+g_{\mu',i+1}) \, &= \, \frac{f_{\mu_{i}}-f_{\mu_{i+1}}}{x_n-x_{\mu(i)}}+\frac{f_{\mu_{i+1}}-f_{\mu_{i+2}}}{x_n-x_{\mu(i+1)}}-\frac{f_{\mu'_{i}}-f_{\mu'_{i+1}}}{x_n-x_{\mu'(i)}}-\frac{f_{\mu'_{i+1}}-f_{\mu'_{i+2}}}{x_n-x_{\mu'(i+1)}}\\
&= \,
\frac{f_{\mu_{i}}-f_{\mu_{i+1}}}{x_n-x_{\mu(i)}}+\frac{f_{\mu_{i+1}}-f_{\mu_{i+2}}}{x_n-x_{\mu(i+1)}}-\frac{f_{\mu'_{i}}-f_{\mu'_{i+1}}}{x_n-x_{\mu(i+1)}}-\frac{f_{\mu'_{i+1}}-f_{\mu'_{i+2}}}{x_n-x_{\mu(i)}}
\, .
\end{align*}
Upon specializing to $x_{\mu(i+1)}=x_{\mu(i)}$, this simplifies to
\[
\frac{f_{\mu_{i}}-f_{\mu'_{i}}}{x_n-x_{\mu(i)}}+\frac{f_{\mu'_{i+2}}-f_{\mu_{i+2}}}{x_n-x_{\mu(i)}} \, .
\]
Since $\mu_i'=\mu_i\circ\tau_{i+1}$ with $\mu_i(i+1)=\mu(i)$ and $\mu_i(i+2)=\mu(i+1)$, it follows that the first numerator vanishes when evaluated at $x_{\mu(i+1)}=x_{\mu(i)}$. And since $\mu_{i+2}'=\mu_{i+2}\circ\tau_i$ with $\mu_{i+2}(i)=\mu(i)$ and $\mu_{i+2}(i+1)=\mu(i+1)$, it follows that the second numerator vanishes when evaluated at $x_{\mu(i+1)}=x_{\mu(i)}$. This proves that  $x_{\mu(i+1)}-x_{\mu(i)}$ divides $(g_{\mu,i}+g_{\mu,i+1})-(g_{\mu',i}+g_{\mu',i+1})$.
\end{proof}

The next lemma explains how the values $k_\M(\mu)$ and $k_\M(\mu\circ\tau_i)$ are related.
\begin{lemma}\label{lemma:thresholds}
Let $\M$ be a matroid on $[n]$ in which $n$ is neither a loop nor a coloop, and let $\mu\in S_{n-1}$. Set $\mu'=\mu\circ\tau_i$ for some $i\in[n-2]$, and set $k=k_\M(\mu)$ and $k'=k_\M(\mu')$. If $k\in[n-1]\setminus\{i,i+1\}$, then $k'=k$, and if $k\in\{i,i+1\}$, then $k'\in\{i,i+1\}$.
\end{lemma}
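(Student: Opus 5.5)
The plan is to reformulate $k_\M(\mu)$ as a threshold on prefixes and then compare the prefixes of $\mu$ and $\mu'$. For $0\le j\le n-1$ let $A_j=\{\mu(1),\dots,\mu(j)\}$ and $A'_j=\{\mu'(1),\dots,\mu'(j)\}$. Call $j$ \emph{good} for $\mu$ if $\rk(A_{j-1}\cup\{n\})>\rk(A_{j-1})$, i.e.\ $n\notin\mathrm{cl}(A_{j-1})$. Then $k_\M(\mu)=\max\{j\in[n-1]: j \text{ good}\}$, and similarly for $\mu'$.

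The key observation is monotonicity of goodness. Since $A_{j-1}\subseteq A_j$ and closure is monotone, goodness of $j$ is downward closed. Index $1$ is always good because $A_0=\emptyset$ and $n$ is not a loop. Moreover $n\in\mathrm{cl}(A_{n-1})$ because $n$ is not a coloop. So good indices form an initial segment $\{1,\dots,k\}$, and $k$ is characterized by $n\notin\mathrm{cl}(A_{k-1})$ and $n\in\mathrm{cl}(A_k)$, with $k\le n-1$. The same characterization holds for $\mu'$.

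Next compare prefixes. Since $\mu'=\mu\circ\tau_i$, we have $A'_j=A_j$ for every $j\ne i$; only $A'_i=A_{i-1}\cup\{\mu(i+1)\}$ differs from $A_i$. Suppose $k\notin\{i,i+1\}$. The sets $A_{k-1}$ and $A_k$ equal $A'_{k-1}$ and $A'_k$, since neither $k-1$ nor $k$ equals $i$: indeed $k\ne i$, and $k-1\ne i$ because $k\ne i+1$. The characterization then gives $k'=k$ directly.

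Now suppose $k\in\{i,i+1\}$. Then $n\notin\mathrm{cl}(A_{i-1})$: if $k=i$ this is the definition, and if $k=i+1$ it follows by monotonicity from $n\notin\mathrm{cl}(A_i)$. Also $n\in\mathrm{cl}(A_{i+1})$, again directly or by monotonicity. Since $A'_{i-1}=A_{i-1}$ and $A'_{i+1}=A_{i+1}$, index $i$ is good for $\mu'$ and index $i+2$ is not (if $i+2\le n-1$). Thus $i\le k'\le i+1$.

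The only mildly delicate point is the boundary cases. We need $k\le n-1$, which uses that $n$ is not a coloop, and $i+2$ may exceed $n-1$. Both are handled by the initial-segment formulation, so I expect no real obstacle. The main step is the monotonicity argument showing that goodness is downward closed.
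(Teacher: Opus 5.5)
Your proof is correct and follows essentially the same route as the paper: your condition $n\notin\mathrm{cl}(A_{j-1})$ is exactly the paper's condition that $n$ is a coloop of $\M|(A_{j-1}\cup\{n\})$, and both arguments compare the prefix sets of $\mu$ and $\mu'$, which differ only at position $i$. Your explicit downward-closedness argument (including the boundary checks that $k\le n-1$ and that $i+2$ may exceed $n-1$) makes rigorous the monotonicity the paper uses implicitly. The paper relies on it when it says $k$ is ``uniquely determined'' and when it passes from $\M|\{\mu(1),\dots,\mu(i),n\}$ to $\M|\{\mu'(1),\dots,\mu'(i+1),n\}$.
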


\begin{proof}
Begin by noticing that $k\in[n-1]$ is uniquely determined by the fact that $n$ is a coloop of $\M|{\{\mu(1),\dots,\mu(k-1),n\}}$ but not a coloop of $\M|{\{\mu(1),\dots,\mu(k),n\}}$. If either $k<i$ or $k>i+1$, then 
\[
\{\mu(1),\dots,\mu(k-1)\}=\{\mu'(1),\dots,\mu'(k-1)\}\;\;\;\text{ and
}\;\;\;\{\mu(1),\dots,\mu(k)\}=\{\mu'(1),\dots,\mu'(k)\} \, , 
\]
and it follows that $k=k'$. It remains to consider the two cases $k\in\{i,i+1\}$.

If $k=i$, then $n$ is a coloop of $\M|{\{\mu(1),\dots,\mu(i-1),n\}}$ but not a coloop of
$\M|{\{\mu(1),\dots,\mu(i),n\}}$, so $n$ is a coloop of $\M|{\{\mu'(1),\dots,\mu'(i-1),n\}}$ but not a coloop of
$\M|{\{\mu'(1),\dots,\mu'(i),\mu'(i+1),n\}}$, and so the threshold value $k'$ must be in $\{i,i+1\}$.

Similarly, if $k=i+1$, then $n$ is a coloop of $\M|{\{\mu(1),\dots,\mu(i),n\}}$ but $n$ is not a coloop of $\M|{\{\mu(1),\dots,\mu(i+1),n\}}$. It follows that $n$ is a coloop of $\M|{\{\mu'(1),\dots,\mu'(i-1),n\}}$ and is not a coloop of $\M|{\{\mu'(1),\dots,\mu'(i),\mu'(i+1),n\}}$, so we must have $k'\in\{i,i+1\}$.
\end{proof}

We now prove the second part of the Proposition~\ref{prop:indstep}.

\begin{proof}[Proof of Proposition~\ref{prop:indstep} (2)]
Let $\mu\in S_{n-1}$ and set $\mu'=\mu\circ\tau_i$ for some $i\in[n-2]$. We must prove that $x_{\mu(i+1)}-x_{\mu(i)}$ divides $f_\mu-f_{\mu'}.$ Since $\mu'_n=\mu_n\circ\tau_i$, it follows that $f_{\mu_n}-f_{\mu'_n}$ is divisible by $x_{\mu(i+1)}-x_{\mu(i)}$, and Lemma~\ref{lem:divides} also implies that $x_{\mu(i+1)}-x_{\mu(i)}$ divides
\[
(g_{\mu,1}+\dots+g_{\mu,n-1})-(g_{\mu',1}+\dots+g_{\mu',n-1}) \, .
\]
Thus, $x_{\mu(i+1)}-x_{\mu(i)}$ divides $h_\mu-h_{\mu'}$. In particular, this proves Proposition~\ref{prop:indstep} in the case where $n$ is either a loop or a coloop of $\M$.

Now suppose that $n$ is neither a loop nor a coloop. Set $k=k_\M(\mu)$ and $k'=k_\M(\mu')$. If $k>i+1$, then $k=k'$ by Lemma~\ref{lemma:thresholds}, and $\mu(k)=\mu'(k)$. Thus, the formulas for $f_\mu$ and $f_{\mu'}$ imply that
\begin{align}\label{eq:difference}
f_\mu-f_{\mu'} \, = \, (h_\mu-h_{\mu'})&-x_{\mu(k)}x_n((g_{\mu,1}+\dots+g_{\mu,k-1})-(g_{\mu',1}+\dots+g_{\mu',k-1}))\\
\nonumber &-x_n(x_{\mu(k)}(g_{\mu,k}-g_{\mu',k})+\dots+x_{\mu(n-1)}(g_{\mu,n-1}-g_{\mu',n-1}) \, .
\end{align}
Since $k>i+1$, it then follows from Lemma~\ref{lem:divides} that $x_{\mu(i+1)}-x_{\mu(i)}$ divides every term on the right-hand side. If $k<i$, then we still have $k=k'$ and $\mu(k)=\mu'(k)$, and the only thing that changes in the expression \eqref{eq:difference} for $f_\mu-f_{\mu'}$ is that we replace 
\[
x_{\mu(i)}(g_{\mu,i}-g_{\mu',i})+x_{\mu(i+1)}(g_{\mu,i+1}-g_{\mu',i+1})\;\;\;\text{ with
}\;\;\;x_{\mu(i)}(g_{\mu,i}-g_{\mu',i+1})+x_{\mu(i+1)}(g_{\mu,i+1}-g_{\mu',i}) \, .
\]
However, after specializing at $x_{\mu(i)}=x_{\mu(i+1)}$, Lemma~\ref{lem:divides} implies that both of these terms are zero, and it then follows that $x_{\mu(i+1)}-x_{\mu(i)}$ divides $f_\mu-f_{\mu'}$ in this case, as well.

If $k=k'=i$, then the expression \eqref{eq:difference} for $f_\mu-f_{\mu'}$ must be adapted by replacing
\[
x_{\mu(i)}(g_{\mu',1}+\dots+g_{\mu',i-1})\;\;\;\text{ with }\;\;\;x_{\mu(i+1)}(g_{\mu',1}+\dots+g_{\mu',i-1})
\]
and by replacing 
\[
x_{\mu(i)}(g_{\mu,i}-g_{\mu',i})+x_{\mu(i+1)}(g_{\mu,i+1}-g_{\mu',i+1})\;\;\;\text{ with
}\;\;\;x_{\mu(i)}(g_{\mu,i}-g_{\mu',i+1})+x_{\mu(i+1)}(g_{\mu,i+1}-g_{\mu',i}) \, .
\]
Upon specializing at $x_{\mu(i)}=x_{\mu(i+1)}$, Lemma~\ref{lem:divides} implies that the differences of each of these replacements vanish, so $x_{\mu(i+1)}-x_{\mu(i)}$ divides  $f_\mu-f_{\mu'}$. If $k=k'=i+1$, on the other hand, then the expression \eqref{eq:difference} for $f_\mu-f_{\mu'}$ must be adapted by replacing
\[
x_{\mu(i+1)}(g_{\mu',1}+\dots+g_{\mu',i})\;\;\;\text{ with }\;\;\;x_{\mu(i)}(g_{\mu',1}+\dots+g_{\mu',i})
\]
and by replacing 
\[
-x_{\mu(i+1)}(g_{\mu,i+1}-g_{\mu',i+1})\;\;\;\text{ with }\;\;\;-x_{\mu(i+1)}g_{\mu,i+1}-x_{\mu(i)}g_{\mu',i+1} \, .
\]
Again, upon specializing at $x_{\mu(i)}=x_{\mu(i+1)}$, Lemma~\ref{lem:divides} implies that the difference of the sums of these replacements vanish, so $x_{\mu(i+1)}-x_{\mu(i)}$ divides  $f_\mu-f_{\mu'}$.

Finally, consider when $k=i$ and $k'=i+1$. Notice that $\mu'(k')=\mu'(i+1)=\mu(i)=\mu(k)$. In this case, the expression \eqref{eq:difference} for $f_\mu-f_{\mu'}$ must be adapted by adding $x_{\mu(i)}x_ng_{\mu',i}$ to the first line and subtracting the same quantity from the second line, so $x_{\mu(i+1)}-x_{\mu(i)}$ divides  $f_\mu-f_{\mu'}$ in this final case, as well, finishing the proof.
\end{proof}

%


\section{Recursion}\label{sec:recursion}

The primary goals of this section are to prove Theorem~\ref{thm:recursion} and then to relate the Laurent polynomials
$Q_\M(f)$ to matroid Euler characteristics. We begin by establishing precise notation for Theorem~\ref{thm:recursion},
using Figure~\ref{fig:slidingfacet} from the introduction, reproduced in Figure~\ref{fig:slidingfacetagain}, as a conceptual aid.

\begin{figure}[ht]
\begin{center}
\begin{tikzpicture}[scale=1.2]
\draw[thick,fill=green!20, fill opacity=.5] (1,0) -- (1,1) -- (1-1.732/2,3/2) -- (1-1.732,1) -- (1-1.732,0) -- (1-1.732/2,-1/2) -- (1,0);

\node at (1,0) [draw, shape=circle, fill=black, minimum size=3pt, inner sep=0pt] {};
\node [below right]  at (1,0) {\tiny $(4,2,6)$};
\node at (1,1/2) [draw, shape=circle, fill=black, minimum size=3pt, inner sep=0pt] {};
\node at (1,1) [draw, shape=circle, fill=black, minimum size=3pt, inner sep=0pt] {};
\node [above right] at (1,1) {\tiny $(2,4,6)$};
\node at (1-1.732/4,5/4) [draw, shape=circle, fill=black, minimum size=3pt, inner sep=0pt] {};
\node at (1-1.732/2,3/2) [draw, shape=circle, fill=black, minimum size=3pt, inner sep=0pt] {};
\node [above] at (1-1.732/2,3/2) {\tiny $(2,6,4)$};
\node at (1-3*1.732/4,5/4) [draw, shape=circle, fill=black, minimum size=3pt, inner sep=0pt] {};
\node at (1-1.732,1) [draw, shape=circle, fill=black, minimum size=3pt, inner sep=0pt] {};
\node [above left] at (1-1.732,1) {\tiny $(4,6,2)$};
\node at (1-1.732,1/2) [draw, shape=circle, fill=black, minimum size=3pt, inner sep=0pt] {};
\node at (1-1.732,0) [draw, shape=circle, fill=black, minimum size=3pt, inner sep=0pt] {};
\node [below left] at (1-1.732,0) {\tiny$(6,4,2)$};
\node at (1-3*1.732/4,-1/4) [draw, shape=circle, fill=black, minimum size=3pt, inner sep=0pt] {};
\node at (1-1.732/2,-1/2) [draw, shape=circle, fill=black, minimum size=3pt, inner sep=0pt] {};
\node [below] at (1-1.732/2,-1/2) {\tiny $(6,2,4)$};
\node at (1-1.732/4,-1/4) [draw, shape=circle, fill=black, minimum size=3pt, inner sep=0pt] {};

\node at (1-1.732/2,1/2) [draw, shape=circle, fill=black, minimum size=3pt, inner sep=0pt] {};

\node at (1-1.732/4,1/4) [draw, shape=circle, fill=black, minimum size=3pt, inner sep=0pt] {};
\node at (1-1.732/4,3/4) [draw, shape=circle, fill=black, minimum size=3pt, inner sep=0pt] {};
\node at (1-1.732/2,1) [draw, shape=circle, fill=black, minimum size=3pt, inner sep=0pt] {};
\node at (1-3*1.732/4,3/4) [draw, shape=circle, fill=black, minimum size=3pt, inner sep=0pt] {};
\node at (1-3*1.732/4,1/4) [draw, shape=circle, fill=black, minimum size=3pt, inner sep=0pt] {};
\node at (1-1.732/2,0) [draw, shape=circle, fill=black, minimum size=3pt, inner sep=0pt] {};

\node at (0.1,-1.2) {$P$};
\end{tikzpicture}
\hspace{20bp}
\raisebox{46bp}{$=$}
\hspace{20bp}
\begin{tikzpicture}[scale=1.2]
\draw[thick,fill=green!20, fill opacity=.5] (1,0) -- (1,1/2) -- (1-3*1.732/4,5/4) -- (1-1.732,1) -- (1-1.732,0) -- (1-1.732/2,-1/2) -- (1,0);

\node at (1,0) [draw, shape=circle, fill=black, minimum size=3pt, inner sep=0pt] {};
\node [below right]  at (1,0) {\tiny $(4,2,6)$};
\node at (1,1/2) [draw, shape=circle, fill=black, minimum size=3pt, inner sep=0pt] {};
\node [above right] at (1,1/2) {\tiny $(3,3,6)$};
\node at (1-3*1.732/4,5/4) [draw, shape=circle, fill=black, minimum size=3pt, inner sep=0pt] {};
\node [above] at (1-3*1.732/4,5/4) {\tiny $(3,6,3)$};
\node at (1-1.732,1) [draw, shape=circle, fill=black, minimum size=3pt, inner sep=0pt] {};
\node [above left] at (1-1.732,1) {\tiny $(4,6,2)$};
\node at (1-1.732,1/2) [draw, shape=circle, fill=black, minimum size=3pt, inner sep=0pt] {};
\node at (1-1.732,0) [draw, shape=circle, fill=black, minimum size=3pt, inner sep=0pt] {};
\node [below left] at (1-1.732,0) {\tiny$(6,4,2)$};
\node at (1-3*1.732/4,-1/4) [draw, shape=circle, fill=black, minimum size=3pt, inner sep=0pt] {};
\node at (1-1.732/2,-1/2) [draw, shape=circle, fill=black, minimum size=3pt, inner sep=0pt] {};
\node [below] at (1-1.732/2,-1/2) {\tiny $(6,2,4)$};
\node at (1-1.732/4,-1/4) [draw, shape=circle, fill=black, minimum size=3pt, inner sep=0pt] {};

\node at (1-1.732/2,1/2) [draw, shape=circle, fill=black, minimum size=3pt, inner sep=0pt] {};

\node at (1-1.732/4,1/4) [draw, shape=circle, fill=black, minimum size=3pt, inner sep=0pt] {};
\node at (1-1.732/4,3/4) [draw, shape=circle, fill=black, minimum size=3pt, inner sep=0pt] {};
\node at (1-1.732/2,1) [draw, shape=circle, fill=black, minimum size=3pt, inner sep=0pt] {};
\node at (1-3*1.732/4,3/4) [draw, shape=circle, fill=black, minimum size=3pt, inner sep=0pt] {};
\node at (1-3*1.732/4,1/4) [draw, shape=circle, fill=black, minimum size=3pt, inner sep=0pt] {};
\node at (1-1.732/2,0) [draw, shape=circle, fill=black, minimum size=3pt, inner sep=0pt] {};

\node at (0.15,-1.2) {$P_T$};
\end{tikzpicture}
\hspace{20bp}
\raisebox{46bp}{$+$}
\hspace{20bp}
\begin{tikzpicture}[scale=1]
\draw[thick,fill=green!20, fill opacity=.5] (1,1) -- (1-1.732/2,3/2);

\node at (1,1) [draw, shape=circle, fill=black, minimum size=3pt, inner sep=0pt] {};
\node at (1-1.732/4,5/4) [draw, shape=circle, fill=black, minimum size=3pt, inner sep=0pt] {};
\node at (1-1.732/2,3/2) [draw, shape=circle, fill=black, minimum size=3pt, inner sep=0pt] {};
\node [above right] at (1,1) {\tiny $(2,4,6)$};
\node [above] at (1-1.732/2,3/2) {\tiny $(2,6,4)$};
\node [below] at (1-1.732/2,-1/2) {\phantom{5}};

\node at (0.5,-1.2) {$P|T\times P/T$};
\end{tikzpicture}
\end{center}

\caption{Sliding a facet again.}\label{fig:slidingfacetagain}
\end{figure}
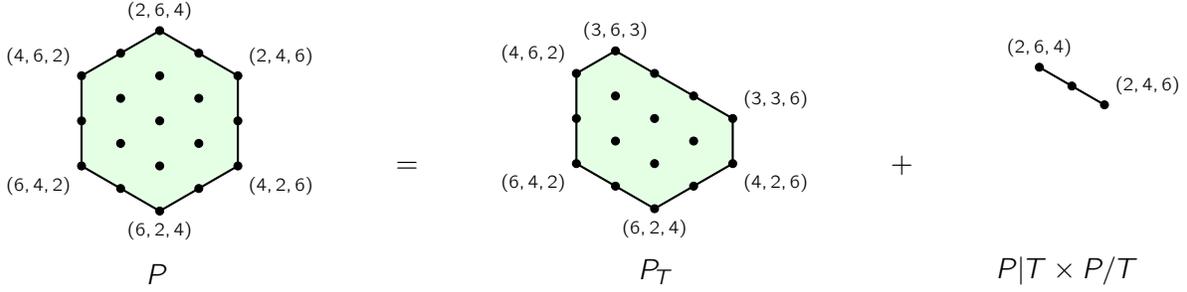

Suppose that $T$ is a nonempty, proper subset of $E$ with $|T|=k$ and define
\[
S_{E,T} \, \coloneqq  \, \left\{\sigma\in S_E:\sigma(i)\in T \text{ for all } i\leq k \right\} .
\]
If $P\subset\R^E$ is a nondegenerate generalized permutohedron, then the vertices of $P$ indexed by $S_{E,T}$ are those that lie in the facet of $P$ that maximizes the dot product with $e_T$. For example, if $E=[3]$ and $T=\{2,3\}$, then $S_{E,T}=\{231,321\}$, and if $P$ is the generalized permutohedron in Figure~\ref{fig:slidingfacetagain}, then $v_P(231)=(2,6,4)$ while $v_P(321)=(2,4,6)$, and these are precisely the vertices of $P$ that maximize the dot product with $e_T=(0,1,1)$.

For any piecewise Laurent monomial $f\in\PLM(\Sigma_E)^*$, we define $f_T\in\PLM(\Sigma_E)^*$ by
\[
(f_T)_\sigma \, \coloneqq  \, \begin{cases}
f_\sigma & \text{ if } \sigma\notin S_{E,T} \, , \\
\frac{x_{\sigma(k+1)}}{x_{\sigma(k)}}f_\sigma & \text{ if } \sigma\in S_{E,T} \, .
\end{cases}
\]
If $f=f_P$ for some nondegenerate generalized permutohedron $P\subset\R^E$, then $f_T$ is the piecewise Laurent monomial associated to the generalized permutohedron obtained by sliding in the facet indexed by $T$ by one lattice step along its normal direction $e_T$. For example, if $P$ is the generalized permutohedron in Figure~\ref{fig:slidingfacetagain}, then $f_T$ is the piecewise Laurent monomial of the generalized permutohedron $P_T$ appearing in Figure~\ref{fig:slidingfacetagain}. Notice that the vertex $v_P(231)=(2,6,4)$ of $P$ was shifted by $e_1-e_3$ to the vertex $v_{P_T}(231)=(3,6,3)$ of $P_T$, while the vertex $v_P(321)=(2,4,6)$ was shifted by $e_1-e_2$ to the vertex $v_{P_T}(321)=(3,3,6)$ of $P_T$.

For each $\sigma\in S_{E,T}$, consider the bijections $\sigma|T:\{1,\dots,k\}\rightarrow T$ and $\sigma/T:\{1,\dots,n-k\}\rightarrow T^c$ defined by $(\sigma|T)(i)=\sigma(i)$ and $(\sigma/T)(i)=\sigma(i+k)$, respectively. There is a natural bijection
\begin{align*}
S_{E,T}&\rightarrow S_T\times S_{T^c}\\
\sigma&\mapsto (\sigma|T,\sigma/T) \, .
\end{align*}
For any $f\in\PLM(\Sigma_E)^*$, define $f|T\in\PLM(\Sigma_T)$ and $f/T\in\PLM(\Sigma_{T^c})$ to be the piecewise Laurent
monomials determined by $(f|T)_{\sigma|T}=f_\sigma|_{\{x_i=1: i\notin T\}}$ and $(f/T)_{\sigma/T}=f_\sigma|_{\{x_i=1 :
i\in T\}}$ for any $\sigma\in S_{E,T}$. Notice that $f|T$ is well defined (i.e., independent of $\sigma\in S_{E,T}$) because given any $\sigma,\sigma'\in S_{E,T}$ with $\sigma|T=\sigma'|T$, there is a sequence of adjacent transpositions $\tau_{i_1},\dots,\tau_{i_\ell}$ with $i_1,\dots,i_\ell>k$ such that $\sigma=\tau_{i_1}\circ\dots\circ\tau_{i_\ell}$, but then the fact that $f\in\PLM(\Sigma_E)^*$ implies that the difference $f_\sigma-f_{\sigma'}$ vanishes upon specializing at $\{x_i=1: i\notin T\}$. Similarly, $f/T$ is well defined. By construction, for any $f\in\PLM(\Sigma_E)^*$ and $\sigma\in S_{E,T}$, we have
\begin{equation}\label{eq:monomialproduct}
f_\sigma \, = \, (f|T)_{\sigma|T} (f/T)_{\sigma/T} \, .
\end{equation}
If $f=f_P$ for a nondegenerate generalized permutohedron $P\subset\R^E$, then $f|T=f_{P|T}$ and $f/T=f_{P/T}$ for some nondegenerate generalized permuothedra $P|T\subset\R^T$ and $P/T\subset\R^{T^c}$, and moreover, for every $\sigma\in S_{E,T}$, the identity \eqref{eq:monomialproduct} reflects the fact that
\[
v_P(\sigma) \, = \, v_{P|T}(\sigma|T)\times v_{P/T}(\sigma/T) \, .
\]
For example, with $P$ as above and $T=\{2,3\}$, 
\[
P|T \, = \, \mathrm{conv}\left\{v_{P|T}(23) = (6,4), \ v_{P|T}(32) = (4,6) \right\} \, \subset \, \R^T, 
\]
\[
P/T \, = \, \mathrm{conv} \left\{v_{P/T}(1)=(2) \right\} \, \subset \, \R^{T^c},
\]
and $v_P(231)=v_{P|T}(23)\times v_{P/T}(1)=(2,6,4)$ and $v_P(321)=v_{P|T}(32)\times v_{P/T}(1)=(2,4,6)$. 

We have now introduced the precise notation required for the statement and proof of Theorem~\ref{thm:recursion}, which we restate below in its most general form.

\begin{theorem}\label{thm:recursionrestated}
Let $E$ be a set of size $n$. For any piecewise Laurent monomial $f\in\PLM(\Sigma_E)^*$, for any matroid $\M$ on $E$, and for any nonempty proper subset $\emptyset\subset T\subset E$, 
\[
Q_\M(f) \, = \, Q_{\M}(f_T)+Q_{\M|T}(f|T) \, Q_{\M/T}(f/T) \, .
\]
\end{theorem}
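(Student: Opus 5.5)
The plan is a direct computation: subtract $Q_\M(f_T)$ from $Q_\M(f)$ term by term and show that what survives factors as a product of the two smaller sums. By definition, $(f_T)_\sigma=f_\sigma$ for $\sigma\notin S_{E,T}$, so those summands cancel identically. What remains is
\[
Q_\M(f)-Q_\M(f_T) \, = \sum_{\sigma\in S_{E,T}} f_\sigma\Big(1-\frac{x_{\sigma(k+1)}}{x_{\sigma(k)}}\Big)Q(C_\sigma)\prod_{i\notin B_\M(\sigma)}(1-x_i) \, ,
\]
where $k=|T|$. This identity of rational functions needs no polynomiality input. (Theorem~\ref{thm:finite2} is only needed to know that each side is a Laurent polynomial, and for that one should also check that $f_T\in\PLM(\Sigma_E)^*$, i.e.\ the divisibility conditions across walls $x_{\sigma(i)}=x_{\sigma(i+1)}$. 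I would verify this separately, splitting into walls interior to $S_{E,T}$, walls with $i\neq k$ leaving $S_{E,T}$ trivially, and the wall $i=k$ where the factor $x_{\sigma(k+1)}/x_{\sigma(k)}$ becomes $1$ on the hyperplane.)

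The key observation is that the factor $1-x_{\sigma(k+1)}/x_{\sigma(k)}$ is exactly one of the $n-1$ denominator factors of $Q(C_\sigma)$. Cancelling it leaves the factors with indices $1,\dots,k-1$ and $k+1,\dots,n-1$, which are precisely
\[
Q(C_{\sigma|T})\,Q(C_{\sigma/T}) \, .
\]
By \eqref{eq:monomialproduct}, we also have $f_\sigma=(f|T)_{\sigma|T}(f/T)_{\sigma/T}$.

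It therefore remains to factor the matroid term. I would prove the following claim: for $\sigma\in S_{E,T}$,
\[
B_\M(\sigma)=B_{\M|T}(\sigma|T)\sqcup B_{\M/T}(\sigma/T) \, .
\]
For $i\le k$, the sets $\{\sigma(1),\dots,\sigma(i)\}\subseteq T$ have the same rank in $\M$ and in $\M|T$, so the greedy steps agree. For $i>k$, I would use the standard formula $\rk_{\M/T}(A)=\rk_\M(A\cup T)-\rk_\M(T)$, which follows from the basis description of contraction given in Section~\ref{sec:prerequisites}. Since every initial segment $\{\sigma(1),\dots,\sigma(i)\}$ with $i>k$ contains $T$, the rank increments in $\M$ equal the rank increments of $\{\sigma(k+1),\dots,\sigma(i)\}$ in $\M/T$.

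Given the claim, the complement $E\setminus B_\M(\sigma)$ splits as $\big(T\setminus B_{\M|T}(\sigma|T)\big)\sqcup\big(T^c\setminus B_{\M/T}(\sigma/T)\big)$, so the product $\prod(1-x_i)$ factors accordingly. Every summand then factors as (a function of $\sigma|T$) times (a function of $\sigma/T$). Because $\sigma\mapsto(\sigma|T,\sigma/T)$ is a bijection $S_{E,T}\to S_T\times S_{T^c}$, the sum becomes $Q_{\M|T}(f|T)\,Q_{\M/T}(f/T)$, as required.

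The main obstacle I expect is the greedy-basis factorization, specifically justifying the contraction rank formula from the paper's basis-based definition of $\M/T$. The rest is bookkeeping of the cancelled denominator factor.
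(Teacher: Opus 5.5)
Your proposal is correct and matches the paper's proof essentially step for step. Both cancel the summands outside $S_{E,T}$, absorb $1-x_{\sigma(k+1)}/x_{\sigma(k)}$ into $Q(C_\sigma)$ to get $Q(C_{\sigma|T})\,Q(C_{\sigma/T})$, factor $f_\sigma$ via \eqref{eq:monomialproduct} and $B_\M(\sigma)$ as $B_{\M|T}(\sigma|T)\sqcup B_{\M/T}(\sigma/T)$, and then sum over the bijection $S_{E,T}\to S_T\times S_{T^c}$; your rank-formula justification of the greedy-basis splitting and your wall check for $f_T$ only fill in details the paper treats as ``by definition.''
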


It follows from the discussion above that, if $\M$ is the Boolean matroid and $f=f_P$ is the piecewise Laurent monomial associated to a nondegenerate generalized permutohedron $P$, then the conclusion of Theorem~\ref{thm:recursionrestated} specializes (by Brion's Formula) to a concrete statement about lattice points:
\[
q(P) \, = \, q(P_T)+q(P|T) \, q(P/T) \, ,
\]
as depicted in Figure~\ref{fig:slidingfacetagain}.

\begin{proof}
Assume $|T|=k$. By definition, 
\begin{align*}
Q_M(f)-Q_M(f_T) \, &=\sum_{\sigma\in S_E}f_\sigma \, Q(C_\sigma)\prod_{i\notin B_\M(\sigma)}(1-x_i)-\sum_{\sigma\in S_E}(f_T)_\sigma \, Q(C_\sigma)\prod_{i\notin B_\M(\sigma)}(1-x_i)\\
&=\sum_{\sigma\in S_{E,T}}f_\sigma \Big(1-\frac{x_{\sigma(k+1)}}{x_{\sigma(k)}}\Big)Q(C_\sigma)\prod_{i\notin B_\M(\sigma)}(1-x_i) \, .
\end{align*}
Using the notation introduced prior to the statement of the theorem, for any $\sigma\in S_{E,T}$, we have
\[
\left(1-\frac{x_{\sigma(k+1)}}{x_{\sigma(k)}}\right)Q(C_\sigma) \, = \,
\frac{1}{\prod_{i=1}^{k-1}\big(1-\frac{x_{\sigma(i+1)}}{x_{\sigma(i)}}\big)}\cdot\frac{1}{\prod_{i=k+1}^{n-1}\big(1-\frac{x_{\sigma(i+1)}}{x_{\sigma(i)}}\big)}
\, = \, Q(C_{\sigma|T}) \, Q(C_{\sigma/T})
\]
and
\[
f_\sigma \, = \, (f|T)_{\sigma|T} (f/T)_{\sigma/T} \, .
\]
Furthermore, by definition of restriction and contraction, it follows that
\[
B_\M(\sigma) \, = \, B_{\M|T}(\sigma|T)\cup B_{\M/T}(\sigma/T) \, , 
\]
so that
\[
\prod_{i\in E \setminus B_\M(\sigma)}(1-x_i) \, =\prod_{i\in T\setminus B_{\M|T}(\sigma|T)}(1-x_i)\prod_{i\in
T^c\setminus B_{\M/T}(\sigma/T)}(1-x_i) \, .
\]
Pulling everything together, we conclude that
\begin{align*}
Q_M(f)-Q_M(f_T) \, &=\sum_{\sigma\in S_T}(f|T)_{\sigma} \, Q(C_{\sigma})\prod_{i\in T\setminus B_{\M|T}(\sigma)}(1-x_i) \sum_{\sigma\in S_{T^c}}(f/T)_{\sigma}Q(C_{\sigma})\prod_{i\in T^c\setminus B_{\M/T}(\sigma)}(1-x_i)\\
&= \, Q_{\M|T}(f|T) \, Q_{\M/T}(f/T) \, .\qedhere
\end{align*}
\end{proof}

In the remainder of this section, we utilize Theorem~\ref{thm:recursionrestated} to connect the Laurent polynomials $Q_\M(f)$ to matroid Euler characteristics, introduced by Larson, Li, Payne, and Proudfoot in \cite{larsonetal}. We begin by recalling general definitions and facts about $K$-rings and Euler characteristics of matroids.

To develop what we require, we now shift from thinking about matroids in terms of bases to thinking about them in terms of flats. Given a matroid $\M$ on $E$, a subset $F\subseteq E$ is called a \emph{flat} if $\rk_\M(F)<\rk_\M(F\cup\{i\})$ for any $i\in E\setminus F$. Consider the unimodular fan $\Sigma_\M$ in $\R^E$ comprised of cones of the form
\[
\R_{\geq 0} \left( e_{F_1},\dots, e_{F_k} \right)
\]
where $\emptyset\subset F_1\subset \dots\subset F_k\subseteq E$ are nonempty flats of $\M$. The quotient of $\Sigma_\M$ by $\R1$ is the \emph{Bergman fan of $\M$}, but for our purposes it will be more useful to work directly with the unquotiented fan $\Sigma_\M$ in $\R^E$. Let $|\Sigma_\M|$ denote the support of $\Sigma_\M$, and notice that the fan structure on $\Sigma_\M$ is inherited from the braid fan: $\Sigma_\M=|\Sigma_\M|\cap\Sigma_E$.

We say that a function $f:|\Sigma_\M|\rightarrow\R$ is \emph{integral linear on $\Sigma_\M$} if $f$ is the restriction of some integral linear function on $\R^E$, we say that $f$ is \emph{(integral) piecewise linear on $\Sigma_\M$} if the restriction of $f$ to each cone of $\Sigma_\M$ is the restriction of some integral linear function on $\R^E$, and we say that $f$ is \emph{(integral) piecewise exponential on $\Sigma_\M$} if, for each cone $C\in\Sigma_\M$, 
\[
f|_C \, = \, \left.\left( a_1e^{m_1}+\dots+a_ke^{m_k} \right)\right|_C
\]
for some integers $a_1,\dots,a_k\in\Z$ and some integral linear functions $m_1,\dots,m_k:\R^E\rightarrow\R$. Let $\L(\Sigma_\M)$, $\PL(\Sigma_\M)$, and $\PE(\Sigma_\M)$ denote the group of integral linear functions, the group of integral piecewise linear functions, and the ring of integral piecewise exponential function on $\Sigma_\M$, respectively, and define the quotients 
\[
\uPL(\Sigma_\M))=\frac{\PL(\Sigma_\M)}{\L(\Sigma_\M)}\;\;\;\text{ and
}\;\;\;\uPE(\Sigma_\M)=\frac{\PE(\Sigma_\M)}{\langle e^f-1:f\in \L(\Sigma_M)\rangle} \, .
\]
We view $\PL(\Sigma_\M)$ and $\uPL(\Sigma_\M)$ as multiplicative subgroups of $\PE(\Sigma_\M)$ and $\uPE(\Sigma_\M)$ via the inclusion $f\mapsto e^f$, and we note that an argument similar to the proof of Lemma~\ref{lemma:span} implies that the ring of piecewise exponential functions is spanned by the group of (exponentials of) piecewise linear functions~\cite{chan2025kringssmoothtoricvarieties}. For each nonempty flat $F$, we let $\delta_F\in\PL(\Sigma_\M)$ denote the unique piecewise linear function on $\Sigma_\M$ that takes value $1$ at $e_F$ and vanishes on all other rays of $\Sigma_\M$. The functions $\delta_F$ (freely) generate $\PL(\Sigma_\M)$ as a group and their exponentials $e^{\delta_F}$ generate $\PE(\Sigma_\M)$ as a ring.

It is known that the Grothendieck $K$-ring of vector bundles on the smooth toric variety $X_{\Sigma_\M}$ is naturally isomorphic to $\uPE(\Sigma_\M)$ \cite{BV97,Mer97}. For the purposes of this paper, we use this as motivation to simply define the \emph{$K$-ring of a matroid $\M$} as the quotient ring of piecewise exponential function on~$\Sigma_\M$:
\[
K(\M) \, \coloneqq  \, \uPE(\Sigma_\M) \, .
\] 

\begin{remark}
In \cite{larsonetal}, the $K$-ring of a loopless matroid was defined as the $K$-ring of the toric variety associated to the Bergman fan $\Sigma_\M/\R 1$. Upon observing that the toric variety $X_{\Sigma_\M}$ is the total space of a line bundle over the toric variety $X_{\Sigma_\M/\R 1}$, it follows that the two varieties have canonically isomorphic $K$-rings: $K(X_{\Sigma_\M})=K(X_{\Sigma_\M/\R1})$. Thus, the definition of $K(\M)$ in \cite{larsonetal} agrees with the one given here.
\end{remark}

If $X$ is a smooth complete variety, taking holomorphic Euler characteristics of vector bundles gives rise to an additive map $\chi:K(X)\rightarrow\Z$, whose value on any vector bundle is the alternating sum of dimensions of the sheaf cohomology groups. If $X$ is not complete, then this description no longer makes sense, as the sheaf cohomology groups generally have infinite dimensions, so we should not expect a nontrivial additive map $K(X)\rightarrow\Z$ to exist. In particular, since $|\Sigma_\M|\neq\R^E$, the toric variety $X_{\Sigma_\M}$ is not complete, and we should not expect a nontrivial additive map $K(\M)\rightarrow\Z$. Nonetheless, Larson, Li, Payne, and Proudfoot \cite{larsonetal} produce a nontrivial additive map
\[
\chi_\M:K(M)\rightarrow\Z
\]
for any loopless matroid $\M$; we refer to the map $\chi_\M$ as the \emph{Euler characteristic of $\M$}. 

We now give an explicit recursive description of $\chi_\M$ that follows from the results in \cite{larsonetal}. Consider the group homomorphism
\begin{align*}
\psi_\M:\PL(\Sigma_E)&\rightarrow K(\M)\\
f&\mapsto \big[e^f|_{|\Sigma_\M|}\big].
\end{align*}
Recall that $\PL(\Sigma_E)$ is freely generated by $\{\delta_T:\emptyset\subset T\subseteq E\}$. For each 
\[
f \, =\sum_{\emptyset\subset T\subseteq E}a_T\delta_T \, \in \, \PL(\Sigma_E)
\]
and each subset $F\subseteq E$, define 
\[
f|F \, =\sum_{\emptyset\subset T\subseteq F}a_T\delta_T\in\PL(\Sigma_F)\;\;\;\text{ and }\;\;\;f/F \,
=\sum_{\emptyset\subset T\subseteq F^c} a_{F\cup T}\delta_T \, \in \, \PL(\Sigma_{F^c}) \, .
\]
Then the collection of matroid Euler characteristics $\{\chi_\M:K(\M)\rightarrow\Z:\M\text{ a loopless matroid}\}$ can be characterized as the unique collection of additive functions for which each composition $\chi_\M^*\coloneqq \chi_\M\circ\psi_\M$ satisfies the following two properties (see \cite[Proposition~8.6]{larsonetal}):
\begin{enumerate}
\item $\chi_\M^*(0)=1$, and
\item for any nonempty proper flat $F$ of $\M$,
\[
\chi_\M^*(f) \, = \, \chi_\M^*(f-\delta_F)+\chi_{\M|F}^*(f|F) \, \chi_{\M/F}^*(f/F) \, .
\]
\end{enumerate}
That there can be at most one collection of additive functions satisfying these two properties follows from induction on the size of the ground set, the fact that $\mathrm{im}(\psi_\M)$ spans $K(\M)$, and the observation that every element of $\PL(\Sigma_E)$ can be obtained from $0$ by successively adding and subtracting functions of the form $\delta_T$. The existence of a collection of additive functions satisfying these two properties, on the other hand, is far from obvious. The proof of existence developed in \cite{larsonetal} begins by restricting to the class of representable matroids, where $\chi_\M$ can be defined as the holomorphic Euler characteristics of vector bundles on a corresponding wonderful compactification, and therefore satisfies conditions (1) and (2) above, and then extending to all matroids using valuativity. Theorem~\ref{thm:euler} below gives a direct proof of existence that avoids algebraic geometry and valuativity arguments.

In order use Theorem~\ref{thm:recursionrestated} to study matroid Euler characteristics, we need to translate between piecewise Laurent polynomials and piecewise exponential functions. To do so, we note that there is a natural surjective ring homomorphism
\[
\phi_\M:\PLP(\Sigma_E)\rightarrow K(\M)
\]
induced by sending each monomial $x^m$ to the exponential function $e^m$, then restricting the domain to $|\Sigma_\M|$, and finally taking the coset by the ideal in the quotient description of $K(\M)=\uPE(\Sigma_\M)$. With notation now established, we state and prove a direct construction of matroid Euler characteristics.

\begin{theorem}\label{thm:euler}
For every matroid $\M$ on $E$, there exists a well-defined additive map $\widetilde \chi_\M:K(\M)\rightarrow \Z$ determined by
\[
\widetilde \chi_\M(\phi_\M(f)) \, = \, Q_\M(f)|_{x=1} \, .
\]
If $\M$ has loops, then $\widetilde\chi_\M=0$. If $\M$ is loopless, then the composition $\widetilde\chi_\M^*\coloneqq \widetilde \chi_\M\circ\psi_\M$ satisfies
\begin{enumerate}
\item $\widetilde\chi_\M^*(0)=1$, and
\item for any nonempty proper flat $F$ of $\M$,
\[
\widetilde\chi_\M^*(f) \, = \,
\widetilde\chi_\M^*(f-\delta_F)+\widetilde\chi_{\M|F}^*(f|F) \, \widetilde\chi_{\M/F}^*(f/F) \, .
\]
\end{enumerate}
In particular, if $\M$ is loopless, then $\widetilde\chi_\M=\chi_\M$, where the latter is the matroid Euler characteristic introduced by Larson, Li, Payne, and Proudfoot \cite{larsonetal}.
\end{theorem}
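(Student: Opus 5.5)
The plan has three steps: show that the evaluation $f\mapsto Q_\M(f)|_{x=1}$ factors through $\phi_\M$, then dispose of matroids with loops, then verify properties (1) and (2) using Corollary~\ref{corollary:basecase} and Theorem~\ref{thm:recursionrestated}.

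\textbf{Well-definedness.} The map $f\mapsto Q_\M(f)$ is additive in $f$. By Theorem~\ref{thm:finite2} it takes values in Laurent polynomials, so evaluation at $x=1$ makes sense, and $\phi_\M$ is surjective. It therefore suffices to show that $Q_\M(f)|_{x=1}=0$ whenever $\phi_\M(f)=0$. I would identify $\ker\phi_\M$ as the span of two kinds of elements.

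\begin{enumerate}
\item[(a)] Elements $(x^m-1)g$ with $m$ a global linear function and $g\in\PLP(\Sigma_E)$. For these, $Q_\M((x^m-1)g)=(x^m-1)Q_\M(g)$, which vanishes at $x=1$ because $Q_\M(g)$ is a Laurent polynomial.
\item[(b)] Piecewise Laurent polynomials vanishing on $|\Sigma_\M|$, i.e.\ those $f$ with $f_\sigma$ specializing to $0$ on every cone $\widetilde C_\sigma$ lying in $|\Sigma_\M|$.
\end{enumerate}

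For (b), I would argue via Lemma~\ref{lemma:span} and its proof that such elements are spanned by products $\prod_{T\in S}(1-x^{\delta_T})$ times piecewise monomials, where $S$ is a set of non-flats. The key computation is that $Q_\M(g\cdot(1-x^{\delta_T}))|_{x=1}=0$ when $T$ is not a flat. To prove it, apply Theorem~\ref{thm:recursionrestated} to the piecewise monomial $g'=g\cdot x^{-\delta_T}$ and the subset $T$; shifting by $\delta_T$ is, up to the lineality normalization, the slide $f\mapsto f_T$. This expresses the difference as $Q_{\M|T}(g'|T)\,Q_{\M/T}(g'/T)$. Since $T$ is not a flat, the contraction $\M/T$ has a loop (an element of $\mathrm{cl}(T)\setminus T$). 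So it suffices to know that $Q_{\N}(h)|_{x=1}=0$ whenever $\N$ has a loop. That same fact gives the claim ``$\widetilde\chi_\M=0$ if $\M$ has loops.''

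\textbf{Loops.} To show $Q_\N(h)|_{x=1}=0$ when $\N$ has a loop $i$, relabel so that $i=n$ and use Proposition~\ref{prop:indstep}. In the loop case every $f_\mu$ carries the factor $(1-x_n)$. Summing over $\mu$ gives $Q_\N(h)=(1-x_n)\,Q_{\N\setminus\{n\}}(h')$ for some $h'\in\PLP(\Sigma_{n-1})\otimes\Z[x_n^{\pm1}]$, and $Q_{\N\setminus\{n\}}(h')$ is a Laurent polynomial by Theorem~\ref{thm:finite2} applied coefficientwise in $x_n$. This vanishes at $x=1$, consistent with Corollary~\ref{corollary:basecase}.

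\textbf{Properties for loopless $\M$.} Property (1) is immediate from Corollary~\ref{corollary:basecase}: $Q_\M(1)=1$ when $\M$ is loopless. For (2), take $f\in\PL(\Sigma_E)$ and set $g=x^{f}$. Apply Theorem~\ref{thm:recursionrestated} with $T=F$ to $g$, noting that $(g)_F$ agrees with $x^{f-\delta_F}$ up to multiplication by a global monomial, which is harmless after evaluating at $x=1$. One then checks that $g|F$ and $g/F$ correspond to $f|F$ and $f/F$ as defined through the $\delta_T$ expansion, using~\eqref{eq:monomialproduct}. Since $F$ is a flat of the loopless matroid $\M$, both $\M|F$ and $\M/F$ are loopless, so the identity evaluated at $x=1$ is exactly (2). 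Uniqueness, as recalled from \cite[Proposition~8.6]{larsonetal}, then gives $\widetilde\chi_\M=\chi_\M$.

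\textbf{Main obstacle.} I expect the hardest step to be the kernel description in the well-definedness argument: showing that $\ker\phi_\M$ is generated by the elements in (a) together with products involving $1-x^{\delta_T}$ for non-flats $T$, and checking that the sign and lineality conventions in ``$f_T$ versus $f-\delta_T$'' match. I would first try to prove the spanning statement by the same inclusion–exclusion over cones used in Lemma~\ref{lemma:span}, restricted to cones not contained in $|\Sigma_\M|$.
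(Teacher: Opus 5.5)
Your strategy matches the paper's proof. You split $\ker\phi_\M$ into global-linear shifts and shifts $x^f\mapsto x^{f-\delta_T}$ with $T$ not a flat. The latter vanish at $x=1$ by Theorem~\ref{thm:recursionrestated}, because $\M/T$ has a loop. Condition (1) is Corollary~\ref{corollary:basecase}, and (2) is Theorem~\ref{thm:recursionrestated} with $T=F$.

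The differences are minor:
\begin{enumerate}
\item You get the vanishing for matroids with loops from the loop case of Proposition~\ref{prop:indstep}. The paper instead reads the factor $1-x_i$ off the definition, which tacitly needs Theorem~\ref{thm:finite2} together with coprimality of $1-x_i$ and the denominators $x_a-x_b$.
\item You correctly flag the description of $\ker\phi_\M$ as the real input; the paper merely asserts it.
\item $(x^f)_T=x^{f-\delta_T}$ holds exactly, with no global monomial, so the recursion should be applied to $g\cdot x^{\delta_T}$ rather than $g\cdot x^{-\delta_T}$. This is harmless, since $g$ ranges over a group.
\end{enumerate}

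The step that genuinely fails is ``one then checks that $g/F$ corresponds to $f/F$.'' The paper's one-line proof of (2) passes over the same identification. Write $\sigma[j]=\{\sigma(1),\dots,\sigma(j)\}$, $a_\emptyset=0$ and $|F|=k$. For $\sigma\in S_{E,F}$ one has $(x^f)_\sigma=\prod_{j=1}^n x_{\sigma(j)}^{a_{\sigma[j]}-a_{\sigma[j-1]}}$. So $(x^f)|F=x^{f|F}$ holds exactly. In $(x^f)/F$, however, the exponent of $x_{\sigma(k+1)}$ is $a_{F\cup\{\sigma(k+1)\}}-a_F$, whereas in $x^{f/F}$ (with the stated $f/F=\sum_{\emptyset\subset T\subseteq F^c}a_{F\cup T}\delta_T$) it is $a_{F\cup\{\sigma(k+1)\}}$. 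Thus $(x^f)/F=x^{f/F-a_F\lambda}$, where $\lambda(b)=\max_{i\in F^c}b_i$, i.e.\ the piecewise monomial $x^{\lambda}$ is $\{x_{\tau(1)}\}_{\tau\in S_{F^c}}$. This is not linear once $|F^c|\ge 2$, so evaluating at $x=1$ does not absorb it.

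A concrete failure: take the Boolean matroid on $[3]$, $F=\{1\}$ and $f=\delta_{\{1\}}$. Then $(x^f)_\sigma=x_1/x_{\sigma(2)}$ if $\sigma(1)=1$ and $1$ otherwise, so $Q_\M(x^f)=1+\frac{x_1}{x_2-x_3}+\frac{x_1}{x_3-x_2}=1$. The right-hand side of (2) with the stated $f/F=0$ is $1+1\cdot 1=2$. With the twisted contraction one gets instead $Q_{\M/F}(x^{-\lambda})=\frac{1}{x_2-x_3}+\frac{1}{x_3-x_2}=0$, and indeed $1=1+1\cdot 0$.

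So your argument, like the paper's, proves (2) only after redefining $f/F\coloneqq\sum_{\emptyset\subset T\subseteq F^c}(a_{F\cup T}-a_F)\delta_T$. This twist reflects the normal bundle of the divisor indexed by $F$. The characterization of $\chi_\M$ quoted before the theorem needs the same correction for the final identification $\widetilde\chi_\M=\chi_\M$ to go through.
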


\begin{proof}
First, observe that if $\M$ has a loop $i\in E$, then the definition of $Q_\M(f)$ implies that $1-x_i$ divides $Q_\M(f)$ for every $f\in\PLP(\Sigma_E)$, so $Q_\M(f)|_{x=1}=0$. Thus, when $\M$ has loops, the content of the theorem is trivial: the additive map $\widetilde\chi_\M$ is just the zero function. Henceforth, we assume that $\M$ is loopless.

To argue that the additive map $\widetilde \chi_\M:K(\M)\rightarrow \Z$ exists, we begin with the additive map 
\begin{align*}
\widehat\chi_\M:\PLP(\Sigma_E)&\rightarrow \Z\\
f&\mapsto Q_\M(f)|_{x=1} \, .
\end{align*}
To show that $\widehat\chi_\M$ induces an additive map on $K(\M)$, we must prove that $\ker(\phi_\M)\subseteq\ker(\widehat\chi_\M)$. We can view $\phi_\M$ as a composition of the following three ring homomorphisms 
\[
\PLP(\Sigma_E)\rightarrow\PE(\Sigma_E)\rightarrow\PE(\Sigma_\M)\rightarrow\uPE(\Sigma_\M)=K(\M) \, ,
\]
where the first is an isomorphism sending each monomial $x^m$ to the corresponding exponential function $e^m$, the second map restricts the domain, and the third map quotients by $\langle e^f-1:f\in \L(\Sigma_\M)\rangle$. Recalling that $\PLP(\Sigma_E)$ is spanned by piecewise Laurent monomials $x^f$ with $f\in\PL(\Sigma_E)$, it follows from the above compositional description of $\phi_\M$ that $\ker(\phi_\M)$ is spanned by
\[
\left\{x^{f+m}-x^f:f\in\PL(\Sigma_E),\;m\in\L(\Sigma_E)\right\}\cup\left\{x^f-x^g:f,g\in\PL(\Sigma_E),\;f|_{|\Sigma_\M|}=g|_{|\Sigma_\M|}\right\}.
\]
Since $Q_\M(x^{f+m})=x^mQ_\M(x^f)$ for any $m\in\L(\Sigma_E)$, it follows that the first set is contained within $\ker(\widehat\chi_\M)$. Now consider an element $x^f-x^g$ from the second set. Since $f$ and $g$ agree once restricted to $|\Sigma_\M|$, it follows that
\[
f-g \, \in \, \Z\left\{\delta_T:\emptyset\subset T\subseteq E,\;T\text{ not a flat of }\M\right\}.
\]
Thus, it suffices to prove that $\widehat\chi_\M(x^f)=\widehat\chi_\M(x^{f-\delta_T})$ for any nonempty subset $T\subseteq E$ that is not a flat of $\M$. Suppose that $|T|=k$. Since shifting $f$ by $\delta_T$ only changes the values of the function on cones containing $e_T$, which are those cones indexed by $S_{E,T}$, a direct computation shows that
\[
(x^{f-\delta_T})_\sigma \, = \, \begin{cases}
(x^f)_\sigma & \text{ if } \sigma\notin S_{E,T} \, , \\
\frac{x_{\sigma(k+1)}}{x_{\sigma(k)}}(x^f)_\sigma & \text{ if } \sigma\in S_{E,T} \, .
\end{cases}
\]
In other words, in the notation of Theorem~\ref{thm:recursionrestated}, we have $x^{f-\delta_T}=(x^f)_T$, so 
\[
Q_\M(x^f) \, = \, Q_\M(x^{f-\delta_T})+Q_{\M|T}(x^f|T) \, Q_{\M/T}(x^f/T) \, .
\]
As $T$ is not a flat of $\M$, it follows that $\M/T$ is a matroid with loops, so $Q_{\M/T}(x^f/T)|_{x=1}=0$, implying that  $Q_\M(x^f)|_{x=1}=Q_\M(x^{f-\delta_T})|_{x=1}$. Thus, $\widehat\chi_\M(x^f)=\widehat\chi_\M(x^{f-\delta_T})$, concluding the proof that $\ker(\phi_\M)$ is contained in $\ker(\widehat\chi_\M)$. In summary, we have now argued the existence of the additive map $\widetilde\chi_\M:K(\M)\rightarrow\Z$.

It remains to verify Conditions (1) and (2) in the statement of the theorem. We first compute $\widetilde\chi_\M^*(0)=Q_\M(1)_{x=1}=1$, where the second equality follows from Corollary~\ref{corollary:basecase}, verifying Condition (1). Condition (2) is a direct consequence of Theorem~\ref{thm:recursionrestated}, using once again that $x^{f-\delta_F}=(x^f)_F$.
\end{proof}

\begin{remark}
In recent work of Eur, Fink, and Larson \cite{EurFinkLarson}, it is shown that $\chi^*_\M(f_P)>0$ for every loopless
matroid $\M$ on $E$ and every generalized permutohedron $P\subset\R^E$. More generally, letting $d$ denote the degree
of the polynomial $k\mapsto\chi^*_\M(f_P^k)$ and defining the quantities $h_0^*,\dots,h_d^*$ by the equation
\[
\sum_{k\geq 0} \chi_\M^*(f_P^k)t^k \, = \, \frac{h_0^*+h_1^*t+\dots+h_d^*t^d}{(1-t)^{d+1}} \, ,
\]
Eur, Fink, and Larson prove that $(h_0^*,\dots,h_d^*)$ is a Macaulay vector \cite[Theorem~B]{EurFinkLarson}, which
implies that $h_0^*,\dots,h_d^*\geq 0$, and this, along with the fact that $h_0^*=1$, further implies that $\chi_\M^*(f_P)>0$. We think it would be very interesting if one could combinatorially derive these positivity result from the interpretation of $\chi_\M^*(f_P)$ as the specialization of the Laurent polynomial $Q_\M(f_P)$.
\end{remark}

\begin{remark}
In \cite{ehrhartfans}, the authors introduce the notion of an \emph{Ehrhart fan}, which essentially amounts to the existence of an additive map from the $K$-ring of the associated toric variety to the integers satisfying the natural generalization of the two conditions (1) and (2) discussed above. In \cite[Theorem~6.1]{ehrhartfans}, it is proved that Bergman fans of loopless matroids are Ehrhart, which essentially follows from \cite[Proposition~8.6]{larsonetal}. Theorem~\ref{thm:euler} gives a combinatorial justification of the Ehrharticity of Bergman fans, avoiding the algebraic geometry and valuativity arguments that were used in \cite{larsonetal}.
\end{remark}


\section{Reciprocity}\label{sec:reciprocity}

In this section, we prove Theorem~\ref{thm:reciprocity}, relate it to Ehrhart reciprocity for generalized permutohedra, and use it to give a combinatorial proof of Serre duality for matroid Euler characteristics. We begin by restating Theorem~\ref{thm:reciprocity} for convenience. Recall that $(-)^\vee:\PLP(\Sigma_E)\rightarrow \PLP(\Sigma_E)$ is the ring homomorphism that inverts all variables: $x_i\mapsto x_i^{-1}$.

\begin{theorem}\label{thm:reciprocityrecalled}
Let $E$ be a set of size $n$ and let $\M$ be a matroid on $E$. Define $\omega_\M\in\PLP(\Sigma_E)$ by
\[
(\omega_\M)_\sigma \, \coloneqq  \, \frac{x_{\sigma(n)}}{x_{\sigma(1)}}\prod_{i\notin B_\M(\sigma)}x_i^{-1} \, .
\]
Then for any piecewise Laurent polynomial $f\in\PLP(\Sigma_E)$, 
\[
Q_\M(f^\vee) \, = \, (-1)^{\rk(\M)-1} \, Q_\M(f\cdot\omega_\M)^\vee.
\]
\end{theorem}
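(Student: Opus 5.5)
Both sides of the identity are finite sums over $\sigma\in S_E$ of rational functions. So the natural approach is to compare them \emph{term by term}, as rational functions, before using any polynomiality. Theorem~\ref{thm:finite2} then guarantees that both sides are genuine Laurent polynomials, so it is harmless that the individual summands are not.

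Expand the right-hand side and apply $(-)^\vee$ summand by summand:
\[
Q_\M(f\cdot\omega_\M)^\vee=\sum_{\sigma\in S_E} f_\sigma^\vee\,(\omega_\M)_\sigma^\vee\, Q(C_\sigma)^\vee\prod_{i\notin B_\M(\sigma)}(1-x_i^{-1}).
\]
Three elementary identities handle the factors.
\begin{itemize}
\item For the cone factor, the identity $1/(1-y^{-1})=-y/(1-y)$ gives
\[
Q(C_\sigma)^\vee=(-1)^{n-1}\prod_{i=1}^{n-1}\frac{x_{\sigma(i+1)}}{x_{\sigma(i)}}\,Q(C_\sigma)=(-1)^{n-1}\frac{x_{\sigma(n)}}{x_{\sigma(1)}}\,Q(C_\sigma),
\]
since the product telescopes.
\item For the matroid factor, $1-x_i^{-1}=-x_i^{-1}(1-x_i)$, and $|E\setminus B_\M(\sigma)|=n-\rk(\M)$, so
\[
\prod_{i\notin B}(1-x_i^{-1})=(-1)^{n-\rk(\M)}\prod_{i\notin B}x_i^{-1}\prod_{i\notin B}(1-x_i),\qquad B=B_\M(\sigma).
\]
\item Finally, the definition of $\omega_\M$ gives
\[
(\omega_\M)_\sigma^\vee=\frac{x_{\sigma(1)}}{x_{\sigma(n)}}\prod_{i\notin B}x_i.
\]
\end{itemize}
Multiplying these together, the monomial factors cancel exactly. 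The total sign is $(-1)^{(n-1)+(n-\rk(\M))}=(-1)^{\rk(\M)-1}$. Hence each summand of $Q_\M(f\cdot\omega_\M)^\vee$ equals $(-1)^{\rk(\M)-1}f^\vee_\sigma\,Q(C_\sigma)\prod_{i\notin B_\M(\sigma)}(1-x_i)$, and summing over $\sigma$ gives $(-1)^{\rk(\M)-1}Q_\M(f^\vee)$.

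One preliminary step is needed to make sense of the right-hand side: I must check that $\omega_\M\in\PLP(\Sigma_E)$, so that $Q_\M(f\cdot\omega_\M)$ is defined and Theorem~\ref{thm:finite2} applies to it. The factor $x_{\sigma(n)}/x_{\sigma(1)}$ is the piecewise monomial of the (negated) support of the simplex-like generalized permutohedron, i.e.\ $e^{-\delta_E}$-type data. I would verify it directly: under an adjacent transposition $\tau_i$ it changes only when $i=1$ or $i=n-1$, and the difference is then divisible by $x_{\sigma(i)}-x_{\sigma(i+1)}$. For the factor $\prod_{i\notin B_\M(\sigma)}x_i^{-1}$, Lemma~\ref{lem:bases}-style reasoning shows that $B_\M(\sigma)$ and $B_\M(\sigma\circ\tau_i)$ either agree or differ by exchanging $\sigma(i)$ and $\sigma(i+1)$. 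In the second case the two monomials differ by swapping $x_{\sigma(i)}^{-1}$ and $x_{\sigma(i+1)}^{-1}$, so their difference is divisible as required. I expect this membership check to be the only real obstacle, and it is mild; the rest is the algebraic bookkeeping above.

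Two small points remain. First, $f^\vee\in\PLP(\Sigma_E)$ holds because inversion preserves divisibility by $x_a-x_b$ up to a unit. Second, the identity of rational functions, together with polynomiality of both sides, gives the stated equality in $\Z[x_i^{\pm1}:i\in E]$.
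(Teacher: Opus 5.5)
Your proposal is correct and is essentially the paper's own proof: a summand-by-summand identity of rational functions using $1/(1-y^{-1})=-y/(1-y)$ and $1-x_i^{-1}=-x_i^{-1}(1-x_i)$, with the resulting monomials cancelled by $\omega_\M$ and the sign $(-1)^{(n-1)+(n-\rk(\M))}=(-1)^{\rk(\M)-1}$. Your check that $\omega_\M\in\PLP(\Sigma_E)$ is sound and supplies something the paper takes for granted, though the heuristic label ``$e^{-\delta_E}$'' is inaccurate (the factor $x_{\sigma(n)}/x_{\sigma(1)}$ corresponds to $-\sum_{\emptyset\subsetneq T\subsetneq E}\delta_T$); since you verify membership directly, this does not affect the argument.
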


\begin{proof}
This follows from a direct computation using the definition of $Q_\M$:
\begin{align*}
Q_\M(f^\vee) \, &= \sum_{\sigma\in S_E}\frac{f_\sigma^\vee}{\big(1-\frac{x_{\sigma(2)}}{x_{\sigma(1)}}\big)\cdots\big(1-\frac{x_{\sigma(n)}}{x_{\sigma(n-1)}}\big)}\prod_{i\notin B_\M(\sigma)}(1-x_i)\\
&= \, (-1)^{\rk(\M)-1}\sum_{\sigma\in S_E}\frac{f_\sigma^\vee\frac{x_{\sigma(1)}}{x_{\sigma(n)}}\prod_{i\notin B_\M(\sigma)}x_i}{\big(1-\frac{x_{\sigma(1)}}{x_{\sigma(2)}}\big)\cdots\big(1-\frac{x_{\sigma(n-1)}}{x_{\sigma(n)}}\big)}\prod_{i\notin B_\M(\sigma)}(1-x_i^{-1})\\
&= \, (-1)^{\rk(\M)-1} \, Q_\M(f\cdot\omega_\M)^\vee.\qedhere
\end{align*}
\end{proof}

We now describe how Theorem~\ref{thm:reciprocityrecalled} generalizes Ehrhart reciprocity for generalized permutohedra.
Recall that the \emph{Ehrhart function} of a lattice polytope $P\subset\R^n$ is the function whose value at a positive
integer $k$ is the the number of lattice points in the $k$th dilate of $P$: $k\mapsto |kP\cap\Z^n|$. It is a
fundamental result of Ehrhart \cite{ehrhartpolynomial} that the Ehrhart function is the specialization of a polynomial
$E_P(t)\in\Q[t]$, called the \emph{Ehrhart polynomial} of $P$. A cornerstone of Ehrhart theory is \emph{Ehrhart
reciprocity}, which gives an interpretation of the value of the Ehrhart polynomial at $-1$ in terms of the relative
interior $P^\circ$ of $P$ \cite{ehrhartpolynomial,macdonald} (see also \cite{crt}):
\[
E_P(-1) \, = \, (-1)^{\dim(P)} \left|P^\circ\cap\Z^n\right|.
\]
More generally, Ehrhart reciprocity is the statement that $E_P(-k) \, = \, (-1)^{\dim(P)} \left|(kP)^\circ\cap\Z^n\right|$, but the $k>1$ case follows from the $k=1$ case upon replacing $P$ with $kP$.

In the case of a generalized permutohedron $P\subset\R^n$, Brion's Formula allows us to interpret the Ehrhart polynomial as $E_P(k)=Q(f_P^k)|_{x=1}$ where
\[
Q(f) \, \coloneqq  \sum_{\sigma\in S_E}f_\sigma \, Q(C_\sigma)\, .
\]
In particular, $E_P(-1)=Q(f_P^\vee)|_{x=1}$. Moreover, if we assume that $P$ has  dimension $n-1$, then
\[
P=\bigcap_{\sigma\in S_n}(v_\sigma(P)+C_\sigma) \; \Longrightarrow \; P^{\circ}=\bigcap_{\sigma\in S_n}(v_\sigma(P)+C_\sigma)^\circ,
\]
and since $C_\sigma$ is unimodular, the lattice points in $(v_\sigma(P)+C_\sigma)^\circ$ are the same as the lattice points in the translation of $v_\sigma(P)+C_\sigma$ by the sum of ray generators $e_{\sigma(2)}-e_{\sigma(1)},\dots,e_{\sigma(n)}-e_{\sigma(n-1)}$. It follows that
\[
P^\circ\cap\Z^n \, = \, \left(\bigcap_{\sigma\in S_n}
\big(v_\sigma(P)+e_{\sigma(n)}-e_{\sigma(1)}+C_\sigma\big)\right)\cap \Z^n \, = \, \left. Q(f_P\cdot\omega)
\right|_{x=1} \, ,
\]
where $\omega_\sigma=\frac{x_{\sigma(n)}}{x_{\sigma(1)}}$. Thus, Ehrhart reciprocity in this setting translates to the formula
\[
Q(f_P^\vee)|_{x=1}=(-1)^{n-1}Q(f_P\cdot\omega)|_{x=1},
\]
which is exactly the statement of Theorem~\ref{thm:reciprocityrecalled} upon specializing $\M$ to the Boolean matroid and $x=1$.

Finally, we note that Theorem~\ref{thm:reciprocityrecalled} along with Theorem~\ref{thm:euler} have the following immediate consequence in terms of Euler characteristics of matroids.

\begin{corollary}
Let $M$ be a matroid on a finite set $E$. Then for any $f\in\PLP(\Sigma_E)$, 
\[
\chi_\M(\sigma_\M(f^\vee)) \, = \, (-1)^{\rk(\M)-1}\chi_\M \left( \sigma_\M(f)\cdot\sigma_\M(\omega_\M) \right) ,
\]
where $\phi_\M:\PLP(\Sigma_E)\rightarrow K(\M)$ is the natural surjective ring homomorphism from piecewise Laurent polynomials on $\Sigma_E$ to piecewise exponential functions on $\Sigma_\M$.
\end{corollary}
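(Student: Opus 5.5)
The plan is to deduce the corollary by combining Theorem~\ref{thm:reciprocityrecalled} with Theorem~\ref{thm:euler}. (The map written $\sigma_\M$ in the statement is the surjective ring homomorphism $\phi_\M$.) First, if $\M$ has loops, Theorem~\ref{thm:euler} gives $\widetilde\chi_\M=0$, so both sides vanish and there is nothing to prove. From here on assume $\M$ is loopless, so that $\chi_\M=\widetilde\chi_\M$.

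Next, apply Theorem~\ref{thm:euler} to the piecewise Laurent polynomial $f^\vee$. Since $f^\vee\in\PLP(\Sigma_E)$ (inversion of variables preserves the divisibility conditions up to units), this gives $\chi_\M(\phi_\M(f^\vee))=Q_\M(f^\vee)|_{x=1}$. Theorem~\ref{thm:reciprocityrecalled} rewrites the right-hand side as $(-1)^{\rk(\M)-1}Q_\M(f\cdot\omega_\M)^\vee|_{x=1}$. Inverting variables commutes with evaluation at $x=1$, because $1^{-1}=1$ and $Q_\M(f\cdot\omega_\M)$ is a Laurent polynomial by Theorem~\ref{thm:finite2}. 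Hence the right-hand side equals $(-1)^{\rk(\M)-1}Q_\M(f\cdot\omega_\M)|_{x=1}$.

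Applying Theorem~\ref{thm:euler} once more, now to $f\cdot\omega_\M$, this quantity equals $(-1)^{\rk(\M)-1}\chi_\M(\phi_\M(f\cdot\omega_\M))$. Since $\phi_\M$ is a ring homomorphism, $\phi_\M(f\cdot\omega_\M)=\phi_\M(f)\,\phi_\M(\omega_\M)$, which is exactly the right-hand side of the corollary.

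The one point needing care is that $\omega_\M$ genuinely lies in $\PLP(\Sigma_E)$, so that $f\cdot\omega_\M\in\PLP(\Sigma_E)$ and both $\phi_\M$ and Theorem~\ref{thm:euler} apply. Theorem~\ref{thm:reciprocityrecalled} asserts this, but I would check it directly. The factor $x_{\sigma(n)}/x_{\sigma(1)}$ is the piecewise monomial of a translate of the standard simplex-type support function, so it is piecewise. For the product $\prod_{i\notin B_\M(\sigma)}x_i^{-1}$, use that $B_\M(\sigma)$ and $B_\M(\sigma\circ\tau_i)$ either coincide or differ by exchanging $\sigma(i)$ and $\sigma(i+1)$, as in the proof of Lemma~\ref{lem:bases}. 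In either case the difference of the two monomials is divisible by $x_{\sigma(i)}-x_{\sigma(i+1)}$. With this verified, the chain of equalities above completes the proof.
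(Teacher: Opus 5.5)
Your argument is correct and matches the paper, which states the corollary as an immediate consequence of Theorem~\ref{thm:reciprocityrecalled} together with Theorem~\ref{thm:euler} (with $\sigma_\M$ a typo for $\phi_\M$); your extra checks (the loop case, $f^\vee,\omega_\M\in\PLP(\Sigma_E)$, and that $(-)^\vee$ commutes with evaluation at $x=1$) just make the implicit steps explicit. One small quibble: $x_{\sigma(n)}/x_{\sigma(1)}$ is the inverse of the piecewise monomial of $\Delta+(-\Delta)$, with $\Delta$ the standard simplex, rather than a translate of a simplex's support function, but the same adjacent-transposition check you use for the basis factor shows it is piecewise anyway.
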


This recovers Serre duality for matroid Euler characteristics (\cite[Theorem~6.2]{larsonetal}) via purely combinatorial methods, answering a question of Larson, Li, Payne, and Proudfoot \cite[Question~6.5]{larsonetal}.

\bibliographystyle{amsplain}
\bibliography{matroidbrion}

\end{document}